\newcommand{\RR}{\mathbb{R}}
\newcommand{\Es}{{\mathcal E}}
\newcommand{\dx}{\,{\rm d}x}
\newcommand{\dy}{\,{\rm d}y}
\newcommand{\dt}{\,{\rm d}t}
\newcommand{\ds}{\,{\rm d}s}
\newcommand{\dtau}{\,{\rm d}\tau}
\newcommand{\A}{\mathcal{L}}
\newcommand{\AM}{\mathcal{L}^{\frac{1}{2}}}
\newcommand{\AI}{\mathcal{L}^{-1}}
\newcommand{\ka}{\overline{\kappa}}
\newcommand{\p}{{\delta_\gamma}}
\newcommand{\G}{\mathbb{G}_\Omega}
\renewcommand{\c}{\mathsf{c}}
\newcommand{\Dc}{D^\alpha_t}
\definecolor{darkblue}{rgb}{0.05, .05, .9}
\definecolor{darkgreen}{rgb}{0.1, .65, .1}
\definecolor{darkred}{rgb}{0.8,0,0}
\newcounter{dummy}
\newcommand\myitem[1][]{\item[(#1)]\refstepcounter{dummy}\def\@currentlabel{#1}} 
\newcommand{\labeltext}[3][]{ 
	\@bsphack
	\csname phantomsection\endcsname
	\def\tst{#1}
	\def\labelmarkup{\emph}
	\def\refmarkup{\normalfont}
	\ifx\tst\empty\def\@currentlabel{\refmarkup{#2}}{\label{#3}}
	\else\def\@currentlabel{\refmarkup{#1}}{\label{#3}}\fi
	\@esphack
	\labelmarkup{#2}
}
\newtheorem{theorem}{Theorem}[section]
\newtheorem{corollary}[theorem]{Corollary}
\newtheorem{lemma}[theorem]{Lemma}
\newtheorem{proposition}[theorem]{Proposition}
\theoremstyle{definition}
\newtheorem{definition}[theorem]{Definition}
\newtheorem{remark}{Remark}
\begin{document}
	
    \title{\vspace{-2cm}\bf Time-Fractional Porous Medium Type Equations:\\ Sharp Time Decay and Regularization.}

	\author{\Large Matteo Bonforte$^{\,a}$, Maria Pia Gualdani$^{\,b}$ and
		Peio Ibarrondo$^{\,a}$ \\}
	\date{}

	\maketitle

	\begin{abstract}
We consider  a class of porous medium type of equations with Caputo time derivative. The prototype problem reads as $\Dc u=-\A u^m$ and is posed on a bounded Euclidean domain $\Omega\subset\mathbb{R}^N$ with  zero Dirichlet boundary conditions. The operator $\A$ falls within a wide class of either local or nonlocal operators, and the nonlinearity is allowed to be of degenerate or singular type, namely, $0<m<1$ and $m>1$. This equation is the most general form of a variety of models used to describe anomalous diffusion processes with memory effects, and finds application in various fields, including visco-elastic materials, signal processing, biological systems and geophysical science.

We show existence of unique solution and new $L^p-L^\infty$ smoothing effects.  The comparison principle, which we provide in the most general setting, serves as a crucial tool in the proof and provides a novel monotonicity formula. Consequently, we establish that the regularizing effects from the diffusion are stronger than the memory effects introduced by the fractional time derivative. Moreover, the solution attains the boundary conditions pointwise.

Finally, we prove that the solution  does not vanish in finite time if $0<m<1$, unlike the case with the classical time derivative. Indeed, we provide a sharp rate of decay for any $L^p$-norm of the solution for any $m>0$. Our findings indicate that memory effects weaken the spatial diffusion and mitigate  the difference between slow and fast diffusion.

	\end{abstract}

	\vspace{5mm}
	
	\noindent {\sc Keywords: } porous medium equation; fast diffusion equation; nonlocal operators; Caputo fractional time derivative; subdiffusion; comparison principle; regularity estimates; long time behavior.\vspace{4mm}\normalcolor
	
\noindent{\sc MSC2020 Classification}. 
Primary: 34K37, 35K55, 35B51, 35B65,  35R11, 35B40.    Secondary: 35K61, 35K65, 35K67.

	\vfill
	\begin{itemize}[leftmargin=*]\itemsep2pt \parskip3pt \parsep0pt
        \item[(a)] Universidad Aut\'{o}noma de Madrid, Departamento de Matem\'{a}ticas, \\
        ICMAT - Instituto de Ciencias Matemáticas, CSIC-UAM-UC3M-UCM\\
		Campus de Cantoblanco, 28049 Madrid, Spain.
		
        \item[(b)] The University of Texas at Austin, Mathematics Department\\
        PMA 8.100 2515 Speedway Stop C1200, Austin, TX 78712-1202, USA.

	\end{itemize}

\small
	
	\newpage

	{ \hypersetup{linkcolor=black} \itemsep2pt \parskip1pt \parsep1pt \tableofcontents }
	
	\normalsize
	
	\vspace{5mm}
    \vfill
    \section*{Acknowledgments}

    M.B. and P.I. were partially supported by the Project PID2020-113596GB-I00 (Ministry of Science and Innovation, Spain) and  by the E.U. H2020 MSCA program, grant agreement 777822. M.B. acknowledges financial support from the Spanish Ministry of Science and Innovation, through the ``Severo Ochoa Programme for Centres of Excellence in R\&D'' (CEX2019-000904-S). P.I. was partially funded by the FPU-grant FPU19/04791 from the Spanish Ministry of Universities.
	
     \noindent This work started in Fall 2022 when P.I. was visiting M.G. at  The University of Texas at  Austin, where an essential part of the work was done. P.I. would like to thank both M.G. and the Mathematical Department of The University of Texas at Austin for the kind hospitality, and the above MSCA project for the financial support.

    \noindent M.G. is partially supported by NSF grants DMS-2206677 and DMS-1514761. M.G. also thanks the institute  ICMAT (Madrid, Spain) for the kind hospitality.

	\smallskip\noindent {\sl\small\copyright~2024 by the authors. This paper may be reproduced, in its entirety, for non-commercial purposes.}


\newpage
\section{Introduction}

Several phenomena, from physics to biology to finance, exhibit events during which fractional behavior and memory effects become predominant; for example viscoelastic materials (whose response depends on their current and past states), certain geographical processes including movement of groundwater or transportation through porous media, neuronal and gene regulation networks, but also control theory and more recent modeling of the financial market. Fractional calculus provides a reliable tools to describe memory effects. In 1967 M. Caputo, in
the context of modeling heterogeneous elastic fields, introduced a novel differential operator,
\begin{equation}\label{Caputo derivative}
D^\alpha_t f(t)=\frac{1}{\Gamma(1-\alpha)}\frac{\rm d}{\dt}\int_{0}^{t}\frac{f(\tau)-f(0)}{(t-\tau)^{\alpha}}\dtau,\, \quad \alpha \in (0,1) .
\end{equation}
Operator (\ref{Caputo derivative})  took later the name of Caputo time derivative of order $\alpha $.  In subsequent research, Caputo modeled certain type of fluid diffusing in porous media using this novel nonlocal operator, \cite{Caputo2,Caputo3,Caputo4}. In these geothermal studies, Darcy's Law is adapted to describe fluids that may carry solid particles obstructing the pores, thus diminishing their size and creating a pattern of mineralization. This phenomenon has recently been observed in various other types of porous materials, including building materials \cite{El Abd-constr,El Abd-constr2,Ramos-App} and zeolite \cite{deAz-zeolite,deAz-zeolite2}.

Systems where particles exhibit anomalous diffusion (sub- or super-diffusive behavior) often involve memory effects  \cite{MK-App2,MK-App,SKB-App}. Examples include diffusion in porous media, turbulent flows, and biological transport processes. These applications require mathematical models that allow
particles to do macroscopical long jumps (Lévy flights), leading to the use of nonlocal operators in the spatial and time variables \cite{Applebaum,Jara,MK-App2} .  Many different mathematical models describing anomalous diffusion in a porous medium can be found in the literature. The one we consider here reads as

\begin{equation}\tag{CPME}\label{CPME}
  D_t^\alpha u=-\A u^m\qquad m>0\,,
\end{equation}
and includes a  general class of densely defined operators  $\A$ both of local and nonlocal type.  Equation  \eqref{CPME} is a {\it density dependent diffusion} resulting in a characteristic scaling $x/t^{\alpha/(2+N(m-1))}$, whenever the diffusion operator is $\A=-\Delta$. Its derivation can be found in \cite{Lukasz-deri} and some of its applications can be read here \cite{deAz-zeolite,deAz-zeolite2,El Abd-constr,El Abd-constr2,Ramos-App}. We briefly recall its derivation  in   Section \ref{Physics}.

From a mathematical point of view, the characteristic scaling of the wetting front variable in anomalous diffusion differs from the one of the classical Heat Equation ($x/\sqrt{t}$). Originally, Caputo derivative arose in the linear setting to achieve a subdiffusive characteristic scaling of the form $x/t^{\alpha/2}$ with $\alpha\in(0,1)$. The non-locality in time, or the  memory effect, represents a ``waiting time'' phenomenon typically derived  within the stochastic framework of the Continuous Time Random Walk, see for instance \cite{MK-App2,MK3}. 

In view of the above discussion, \eqref{CPME} encompasses a wide variety of anomalous diffusion models that combine local and nonlocal spatial operators, Caputo fractional time derivative, and  $m$-power like  nonlinearities for any $m>0$. In this manuscript, we provide a comprehensive qualitative and quantitative study of this model.

 Beside global well-posedness, the main contributions of this manuscript are threefold: (i) study of comparison principle and time monotonicity formula, (ii) $L^p-L^\infty$ smoothing effects,  and (iii) optimal long time behaviour. The surprising facts established with our analysis are the {\it regularizing effects }
  and the {\em{non-extinction in finite time}} for all solutions of \eqref{CPME} when $m\in(0,1)$.
 Notably, the memory effect slows down the diffusion, minimizing the relevance of the nonlinearity parameter $m$  in the ranges $m\in (0,1)$ and $m>1$. Nevertheless, the diffusive nature of the equation still provides a regularization of the solution, to the best of our knowledge,  a feature previously unknown for nonlinear equations involving Caputo derivatives.  We emphasize that all these results are new even for the classical Laplacian $\A=-\Delta$ and $\alpha\in(0,1)$.\normalcolor

Within the mathematical framework, the prototype subdiffusive PDE is the ``Heat Equation with memory'',
 \begin{equation}\label{Caputo HE}
   \Dc u=\Delta u\,,
 \end{equation}
 and  the literature on this topic is vaste.
  For the well-posedness and regularity  of \eqref{Caputo HE} in $\mathbb{R}^N$, see for instance the works of Allen, Caffarelli and Vasseur in \cite{ACV}; Kemppainen, Siljander, Vergara and Zacher in \cite{KSVZ}; and Zacher in \cite{Z2008}.
  Optimal asymptotic decay for solutions of \eqref{Caputo HE} are obtained by Cortazar, Quiros and Wolansky in \cite{CQW} for the Cauchy problem in $\mathbb{R}^N$. For the Dirichlet problem on bounded domains, Vergara and Zacher \cite{VZ} show long time decay estimates, also allowing for general operators with variable coefficients in space and time. Recently, Chan, Gomez-Castro and Vázquez studied in \cite{CGV} global well-posedness of \eqref{CPME} with $m=1$ for singular solutions on bounded domains.

 From a nonlinear perspective,  Vergara and Zacher in \cite{VZ}  consider several fractional nonlinear models and obtain sharp decay estimates of $L^q$ norms using fractional ODEs techniques.  The Porous Medium with fractional pressure, associated to the so-called Caffarelli-Vázquez model first introduced in \cite{Caf-Vaz}, has been studied in its Caputo derivative version, by Allen, Caffarelli and Vasseur in \cite{ACV-PME} and by Daus, Xu, Zamponi, Zhang and the second author in \cite{DGXZZ}.

Akagi \cite{GA}, and Li and Liu \cite{LiLiu2019} developed a theory of fractional gradient flows  in  Hilbert spaces, which is the analogous of the Brezis-Komura theory   with fractional time derivative \cite{Brezis-Book-semigr,K67}. This theory provides well-posedness for both linear and for nonlinear problems, and it is the starting point of our paper.


The memory effects complicate the analysis quite a lot. Roughly speaking, the memory effects somehow destroy the semigroup structure,   essential in the De Giorgi-Nash-Moser theory. Surprisingly enough, a nontrivial adaptation of the so-called Green function method, introduced in \cite{BV-PPR1}, allows us to establish our main results. This is achieved by employing novel time monotonicity estimates, a feature that comes as a surprise in the context of the Caputo setting, as we shall further comment below.

To provide further insights about our results, some numerology is in order:  our asymptotic estimates correspond to the known estimates in the formal limit $m\rightarrow 1$. On the other hand, this does not happen in the --formal-- limit $\alpha\rightarrow1^-$. Indeed,  neither the exponents of smoothing effects nor the ones in the long time behaviour estimates, correspond to the known ones when $\alpha=1$.

\vspace{4mm}

Next, we provide a more detailed exposition of our principal findings:\vspace{4mm}

\noindent\textbf{Comparison principle and time monotonicity.} We first recall the results about existence and uniqueness of solution for \eqref{CPME} from \cite{GA,LiLiu2019}. One of the approximations steps proposed in \cite{LiLiu2019} is the starting point of our proof of the comparison principle. The comparison principle here corresponds to ``well-ordering'': if initial data for \eqref{CPME} are ordered, then solutions will preserve the same order for any positive time.

In the present case, we have to overcome two difficulties: we do not have the standard semigroup property $S(t+h)f=S(h)S(t)f$ because of the Caputo derivative, and we lack regularity to perform rigorously the computations. For these reasons, our proof involves several approximation steps.  It is remarkable that we inherit comparison properties from the discretized problem, in such a general framework (see more comments on that after Theorem \ref{Comparison Principle}). This comparison principle lays the cornerstone for several other results,  most importantly for a \textit{new time monotonicity estimates }``a la Benilan-Crandall''.  Namely, we show that $t^{\frac{\alpha}{m-1}}u(t)$ is monotone in time for every $t>0$, cf. Theorem \ref{Thm Benilan-Crandall}. The unexpected nature of this result arises from the memory effect, which complicates any form of time monotonicity.\vspace{4mm}

\noindent\textbf{Smoothing effects and boundary estimates. }We show {\it new $L^p-L^\infty$ smoothing effects}, stated in Theorem \ref{Thm smoothing effect}. To the best of our knowledge, these kind of $L^p-L^\infty$ regularizing effects have been proved by Kemppainen, Siljander, Vergara and Zacher in \cite{KSVZ} only for the linear Caputo Heat equation \eqref{Caputo HE} on the whole space, whenever the initial data is integrable enough. Our regularizing estimates hold for $u_0\in L^p(\Omega)$ with $p>\frac{N}{2}$. There is a structural reason why one should not expect similar results for $p<\frac{N}{2}$: the fundamental solution of \eqref{Caputo HE} in $\mathbb{R}^N$ is unbounded and belongs to $L^q(\mathbb{R}^N)$ with $q<\frac{N}{N-2}$. Therefore, one should ask the initial datum to be in $L^p(\mathbb{R}^N)$ with $p>\frac{N}{2}$ being the conjugate exponent of $q$.

One question for the future is the proof of higher regularity, starting for example with the work of Allen, Caffarelli and Vasseur in \cite{ACV} for H\"older continuity of the linear problem. We do not address this issue in the current manuscript.
The results present in the literature concerning nonlinear diffusions with Caputo time derivative are mostly of propagation of $L^p$-norms: for instance, in \cite{WWZ} the authors prove propagation of $L^\infty$-norm for \eqref{CPME} in the case when $\A=(-\Delta)$, meaning that solutions remains bounded whenever $u_0$ is bounded.

Lastly, we study the behaviour of solutions near the boundary  and show sharp estimates of the form $u(t,x)\lesssim {\rm dist}^{\gamma/m}(x,\partial\Omega)t^{-\alpha/m}$ in Theorem \ref{Thm Boundary behaviour} for some $\gamma\in(0,1]$. Indeed, for ``big initial data'' we obtain lower estimates with matching powers, proving the so-called Global Harnack Principle, that is $u(t,x)\asymp {\rm dist}^{\gamma/m}(x,\partial\Omega)t^{-\alpha/m}$, see Theorem \ref{Thm Boundary behaviour2}. These results are optimal both in space and in time, the latter optimality is explained in what follows.\vspace{4mm}

\noindent{\bf The case of unbounded domains.} All the above results can be carried out, essentially with similar proofs, to the case of \eqref{CPME} posed on unbounded domains, including the whole space $\mathbb{R}^N$. See Remark \ref{Unbounded domains} for more details.

\vspace{4mm}

\noindent\textbf{Optimal long time behaviour.} Our last contribution concerns  a sharp analysis of the long time behaviour, cf. Theorem \ref{Thm Sharp Decay Lp}: in contrast to the local-in-time results, in the fast diffusion regime $m\in (0,1)$, solutions to \eqref{CPME} do not extinguish in finite time. This is an important side effect of the ``memory'' intrinsic in the Caputo derivative. We compute the rates of decay of all $L^p$ norms from above and below, with matching powers. This phenomenon of non-extinction was first observed by Vergara and Zacher in \cite{VZ} for separate-variables solutions.
Here, by means of differential inequalities involving Caputo derivative\footnote{We devote Appendix \ref{sec: FODE} to recall the main results used here about ODE with Caputo derivative.}, we show that the particular behaviour of separate-variables solutions is enjoyed by all the other solutions as well, and is optimal.  This aspect reveals a remarkable discontinuity in the parameter $\alpha$: indeed we show here that solutions decay as $u(t)\asymp t^{-\alpha/m}$, while in the local-in-time case $\alpha=1$ they are known to decay as $u(t)\asymp t^{-1/(m-1)}$ when $m>1$ and extinguish in finite time $u(t)\asymp (T-t)_+^{1/(1-m)}$ when $m\in (0,1)$.

\vspace{4mm}

\noindent\textbf{Related results. }We conclude this section by recalling previous work on the porous medium (PME) and fast diffusion equations (FDE) with the classical time derivative. \normalcolor The literature on these models is extremely rich and spans several decades. The few references we mention here are by no means exhaustive. Both the porous medium and fast diffusion equations have been widely used to describe different physical phenomena, including  flow of gas through porous media \cite{VazBook}, plasma in reactors \cite{BH1978}, etc. For this reason and for their own mathematical interest, they have attracted a lot of attention  since the early 50s, see for instance the lecture notes of Aronson \cite{Aronson-Book} and the books of Barenblatt \cite{B1979}, V\'azquez \cite{VazLN,VazBook}  and Daskalopulos and Kenig \cite{DaskaBook}.  See also the papers \cite{Aronson-Peletier,Vaz-Monath} ($m>1$) and \cite{BF-FDE} ($m<1$).

In the nonlocal case, we refer to the survey \cite{VazSurvey-Cetraro} for a complete review up to 2016, and we just mention the works strictly related to ours. When dealing with the Cauchy problem for  $u_t=-(-\Delta)^s u^m$, a complete theory
has been developed by De Pablo, Quirós, Rodríguez and Vázquez in \cite{DPQR1,DPQR2,DPQRV1,DPQRV2}; see also \cite{BV-ADV, VazJEMS}.
As for the Dirichlet problem in the degenerate case (i.e. $\alpha=1$ and $m>1$ in the present paper), Figalli, Ros-Oton, Sire, Vazquez and the first author, performed a sharp analysis of the behaviour of the solutions in a series of papers \cite{BFR-CPAM,BFV2018CalcVar,BFV2018,BSV2013,BV-PPR1, BV2016}: existence and uniqueness (in optimal classes), comparison, smoothing effects, infinite speed of propagation (contrary to the local case $\A=-\Delta$), local and global Harnack inequalities, sharp interior and boundary regularity estimates, sharp asymptotic behaviour. When $m<1$ the theory is still under development: existence, uniqueness, smoothing effects and boundary estimates have been recently proven by Ispizua, the first and last authors in \cite{BII}, while Harnack and higher regularity estimates shall appear in \cite{BII-TO-DO-IN-PROGRESS}. The proofs rely on the so-called Green function method, introduced in \cite{BV-PPR1}, which offers a valid alternative to the more-classical approaches ``a la'' De Giorgi-Nash-Moser; see \cite[Section 3]{BII} for a comparison between the two methods.

\subsection{Main theorems}

\noindent We study the following Cauchy-Dirichlet problem with Caputo time derivative
\begin{equation}\label{CDP}\tag{CPME}
		\left\{\begin{array}{lll}
			D_t^{\alpha} u(t,x)=-\A u^m(t,x) & \qquad\mbox{on }(0,+\infty)\times\Omega\,,\\
			u(t,\cdot)=0 & \qquad\mbox{on the lateral boundary, }\forall t>0\,,\\
			u(0,\cdot)=u_0  & \qquad \mbox{in } \Omega\,,
		\end{array}\right.
\end{equation}
where $m>0$ and $\Omega\subset \RR^N$ is a bounded smooth domain, with boundary at least $C^{2,\alpha}$ to avoid delicate questions about boundary regularity. We also assume $N>2s$. Note that Caputo derivative is defined as a classical time derivative of a convolution with the kernel $k\in L^1_{\rm loc}((0,+\infty))$:
$$\Dc f(t)=\partial_t[k*(f-f_0)](t)=\frac{\rm d}{ \dt}\int_{0}^{t}k(t-\tau)\,(f(\tau)-f_0)\dtau\,,$$
where the kernel is given by
 $$k(t)=\frac{t^{-\alpha}}{\Gamma(1-\alpha)}\,.$$
This kernel $k$ admits an inverse of the form
 $$\ell(t)=\frac{t^{\alpha-1}}{\Gamma(\alpha)}\,,$$
 with respect to the convolution, i.e., $k*\ell=1$.
It is worth mentioning that there are different types of kernels that define distinct fractional derivatives, and it would be interesting to analyze, see \cite{GA,Stinga,WWZ}.

The operator $\A$ is a linear diffusion operator of local or nonlocal type. We refer to Section \ref{sec: assumptions} and  \ref{sec: examples} for a more precise description and examples. We briefly recall here  the two main assumptions: the inverse operator $\AI: L^2(\Omega)\rightarrow L^2(\Omega)$ is represented via a kernel $\G$:
\[
\AI u(x):=\int_\Omega \G(x,y)u(y)\dy\,.
\]
The Green function $\G$ satisfies
\begin{equation}\label{K1}\tag{G1}
  0\le\G(x,y)\le\frac{c_{1,\Omega}}{|x-y|^{N-2s}}\,,
\end{equation}
for some $s\in(0,1]$. Assumption \eqref{K1} guarantees the compactness of the operator $\AI$ in $L^2(\Omega)$, see \cite[Proposition 5.1]{BFV2018CalcVar}. As a consequence, the operator $\AI$ has a discrete spectrum and a $L^2(\Omega)$ orthonormal basis of eigenfunctions that we denote by $\left(\lambda_k^{-1},\Phi_k\right)_{k\ge 1}$. We moreover assume that $\A$ satisfies the following weak form of Kato's inequality:
\begin{equation}\tag{K}\label{K}
  \int_\Omega\A\left[h(u)\right]\Phi_1\dx\le \int_\Omega h'(u) \A u\,\Phi_1\dx\,,
\end{equation}
 for any convex function $h\in C^1(\mathbb{R})$ with $h(0)=0$, any function $u\in L^1_{\rm loc}(\Omega)$, and $\Phi_1\ge 0$ the first eigenfunction of $\A$.

The solutions that we consider here are defined in the Hilbert space $H^*(\Omega)$, which is the topological dual space of $H(\Omega)$, the domain of the quadratic form associated to the operator $\A$:
$$ H(\Omega)=\left\lbrace u\in L^2(\Omega)\,:\,\int_\Omega u\A u\dx<+\infty\right\rbrace\,.$$
The space $H^*(\Omega)$ is equipped with the following inner product and associated norm:
$$\langle u,v\rangle_{H^*(\Omega)}=\int_\Omega u\,\AI u\dx\qquad\mbox{and}\qquad\|u\|^2_{H^*(\Omega)}=\int_\Omega u\,\AI u\dx\,.$$
Let us recall that when $\A=(-\Delta_\Omega)$ then $H(\Omega)=H^1_0(\Omega)$ and $H^*(\Omega)=H^{-1}(\Omega)$. We refer to  \cite{BII,BSV2013} for more details about the space $H^*(\Omega)$ when $\A$ is a general nonlocal operator.
Now, we introduce the class of solutions that we will consider in this manuscript.
\begin{definition}($H^*$-solutions). Let $u\in L^2((0,T):H^*(\Omega))$, then $u$ is a $H^*$-solution of \eqref{CDP} if the following holds:
\begin{enumerate}[label=\roman*)]
  \item We have $$k*(u-u_0)\in W^{1,2}((0,T):H^*(\Omega)),\qquad [k*(u-u_0)](0)=0,\qquad |u|^{m-1}u\in L^2((0,T):H(\Omega)).$$
  \item For a.e. $t\in(0,T)$
  \begin{equation}\label{strong H solution}
    \int_\Omega\Dc u(t,x)\AI\varphi(x)\dx=-\int_\Omega u^m(t,x)\varphi(x)\dx\qquad\quad\forall\varphi\in H^*(\Omega).
  \end{equation}
\end{enumerate}
\end{definition}
To ensure existence and uniqueness in this class of solutions, we use the theory of fractional gradient flows in Hilbert spaces developed in \cite{GA,LiLiu2019,LiSalgado}. The basic idea is that one would like to perform, in the more delicate framework of Caputo time derivatives, the strategy typical of the Brezis-Komura approach \cite{Brezis-Book-semigr,K67}: gradient flow solutions are obtained  by minimizing at each time step a $\lambda$-convex energy functional, which in the case of the Porous Medium Equation is
\begin{equation*}
  \Es(v)=\frac{1}{1+m}\int_{\Omega}|v|^{1+m}\,.
\end{equation*}
For the Caputo derivative, in \cite{GA} it is proved that the solutions of fractional gradient flows associated to $\Es$ are indeed  $H^*$-solutions according to the above definition.

Our first main result is a comparison principle for  $H^*$-solutions, which has its own interest, and it is also a key ingredient for proving the rest of the results presented in this paper. To the best of our knowledge, this comparison result is new and a complete proof can be found in Section \ref{Ssec.Comparison}, based on the discretized problem studied in Section \ref{Discrete problem}. For the sake of completeness, together with the comparison principle, we state existence and uniqueness result proven by Akagi in \cite{GA} and by Li and Liu in \cite{LiLiu2019}.\normalcolor

\begin{theorem}[Existence, Uniqueness and Comparison Principle]\label{Comparison Principle} Assume \eqref{K1}. Then, for each $u_0\in L^{1+m}(\Omega)\cap H^*(\Omega)$ with $m>0$, there exists a unique $H^*$-solution for \eqref{CDP}. Moreover, let
 $u$ and $v$ be two $H^*$-solutions with initial data $u_0,v_0\in L^{1+m}(\Omega)\cap H^*(\Omega)$, respectively. If assumption \eqref{K} holds and $u_0\le v_0$ a.e. in $\Omega$, then for every $t>0$
\begin{equation}
  u(t)\le v(t)\qquad\mbox{ a.e. in }\,\Omega\,.
\end{equation}
In particular, if $u_0\ge0$ a.e. in $\Omega$, then $u(t)\ge0$ a.e. in $\Omega$ for every $t>0$.
\end{theorem}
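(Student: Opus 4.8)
The existence and uniqueness of the $H^*$-solution is not new: it follows from the theory of fractional gradient flows in Hilbert spaces of \cite{GA,LiLiu2019}, since under \eqref{K1} the operator $\AI$ is compact on $L^2(\Omega)$ and $\Es(v)=\frac{1}{1+m}\int_\Omega|v|^{1+m}$ is a proper, convex and lower semicontinuous functional on $H^*(\Omega)$ for every $m>0$. I would therefore only recall this and concentrate on the comparison principle, which I plan to prove first at the time-discrete level and then pass to the limit. Fix a step $h>0$ and nodes $t_n=nh$. A backward-Euler-type discretization of $\Dc=\partial_t[k*(\,\cdot\,-u_0)]$ --- the one used in \cite{LiLiu2019} and recalled in Section~\ref{Discrete problem} --- turns \eqref{CDP} into a sequence of elliptic problems
\begin{equation}\label{eq:disc}
  u_n+\mu\,\A\!\left(|u_n|^{m-1}u_n\right)=\sum_{j=1}^{n-1}\beta^{(n)}_j\,u_{n-j}+\beta^{(n)}_n\,u_0\,,\qquad n\ge1\,,\quad \mu=\mu(h,\alpha)>0\,,
\end{equation}
with $u_n$ determined by $u_0,\dots,u_{n-1}$. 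The structural fact I would exploit is that all memory weights satisfy $\beta^{(n)}_j\ge0$, a consequence of the Caputo kernel $k(t)=t^{-\alpha}/\Gamma(1-\alpha)$ being positive, decreasing and convex. Each step \eqref{eq:disc} is uniquely solvable in $H^*(\Omega)\cap L^{1+m}(\Omega)$, being the Euler--Lagrange equation of the strictly convex functional $v\mapsto \tfrac12\|v-g\|^2_{H^*(\Omega)}+\mu\,\Es(v)$, where $g$ denotes the given right-hand side of \eqref{eq:disc}; this minimizing-movement structure also yields the a priori bounds (in $H^*$ and $L^{1+m}$) needed later for compactness.

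Next I would prove comparison at the discrete level in two steps. First, an elliptic comparison for \eqref{eq:disc}: if two admissible right-hand sides satisfy $f_1\le f_2$, then the corresponding solutions satisfy $w_1\le w_2$. To see it, put $W_i=|w_i|^{m-1}w_i$, take a continuous nondecreasing $p$ with $p\equiv0$ on $(-\infty,0]$ and convex primitive $h_0\ge0$, $h_0(0)=0$, subtract the two equations, test against $p(W_1-W_2)\,\Phi_1\ge0$ and integrate; Kato's inequality \eqref{K} gives
$$\int_\Omega \A(W_1-W_2)\,p(W_1-W_2)\,\Phi_1\dx\ \ge\ \lambda_1\int_\Omega h_0(W_1-W_2)\,\Phi_1\dx\ \ge\ 0\,,$$
whence $\int_\Omega(w_1-w_2)\,p(W_1-W_2)\,\Phi_1\dx\le\int_\Omega(f_1-f_2)\,p(W_1-W_2)\,\Phi_1\dx\le0$; since $t\mapsto|t|^{m-1}t$ is increasing the integrand on the left is nonnegative, hence vanishes a.e., and letting $p\uparrow\mathrm{sign}^+$ forces $(w_1-w_2)_+\equiv0$. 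Second, an induction on $n$: if $u_0\le v_0$, then, $\beta^{(n)}_j$ being nonnegative, $u_k\le v_k$ for all $k<n$ implies that the right-hand side of \eqref{eq:disc} built from $(u_k)$ does not exceed the one built from $(v_k)$, hence $u_n\le v_n$. Thus the discrete iterates are ordered at every node.

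It remains to let $h\to0$. The uniform estimates make the piecewise-constant (equivalently piecewise-linear) interpolants of $(u_n)$ and $(v_n)$ precompact in the relevant topologies, and their limits are $H^*$-solutions of \eqref{CDP} with data $u_0$ and $v_0$ --- this is exactly the convergence analysis underlying \cite{LiLiu2019} --- so by the uniqueness already recalled they coincide with $u$ and $v$. The discrete inequalities $u_n\le v_n$ then pass to the limit, giving $u(t)\le v(t)$ a.e.\ in $\Omega$ for a.e.\ $t>0$, hence for every $t>0$ by time-continuity of $H^*$-solutions. Finally, $w\equiv0$ is an $H^*$-solution with datum $0$, so comparing $u$ (with $u_0\ge0$) against $w$ yields $u(t)\ge0$.

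I expect the main obstacle to be the passage to the limit while preserving the Caputo structure: one must establish well-posedness and uniform-in-$h$ estimates for \eqref{eq:disc} in the $H^*$ topology, control the memory sums $\sum_j\beta^{(n)}_j u_{n-j}$, and identify the limit as a genuine $H^*$-solution --- delicate because the nonlinearity $|u|^{m-1}u$ is neither Lipschitz (for $m<1$) nor uniformly elliptic (for $m>1$), so the estimates must come from the convexity of $\Es$ rather than from any regularity of the flow. By contrast, once the sign $\beta^{(n)}_j\ge0$ is in place, the discrete comparison step is a routine application of Kato's inequality.
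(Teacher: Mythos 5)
Your proposal is correct in outline and follows the same overall architecture as the paper: discretize via the Li--Liu deconvolution scheme, prove comparison at the discrete level using Kato's inequality \eqref{K} tested against (functions of the solutions times) $\Phi_1$, and pass to the limit using the convergence of the scheme together with uniqueness of the $H^*$-solution. Where you genuinely diverge is in the discrete step. The paper does not prove a per-step elliptic comparison; instead it establishes a discrete T-contraction $\|(U_k-V_k)_+\|_{L^1_{\Phi_1}}\le\|(U_0-V_0)_+\|_{L^1_{\Phi_1}}$ (Theorem \ref{Discrete T-contractivity L1Phi1}) by showing that the \emph{discrete Caputo derivative} of $k\mapsto\int_\Omega(U_k-V_k)_+\Phi_1$ is nonpositive, using the discrete chain-rule inequality of Proposition \ref{convex chain rule}(i) and then the property that a nonpositive discrete Caputo derivative implies $X_k\le X_0$ (Proposition \ref{convex chain rule}(ii)). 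Your route --- rewrite each step as $u_n+\mu\,\A(|u_n|^{m-1}u_n)=\sum_j\beta^{(n)}_j u_{n-j}$ with $\beta^{(n)}_j=c_j/c_0\ge0$, prove an elliptic comparison for a single step, and induct --- uses exactly the same two structural inputs (positivity of the weights $c_i$ and Kato's inequality) but packages them differently. Your version is more modular and arguably more elementary; the paper's version yields the quantitative T-contraction estimate in $L^1_{\Phi_1}$, which is stated as a result of independent interest, whereas yours only gives the ordering. Your elliptic comparison argument itself is sound: the sign of $(w_1-w_2)\,p(W_1-W_2)$ and the eigenfunction identity $\int_\Omega\A[h_0(\cdot)]\Phi_1=\lambda_1\int_\Omega h_0(\cdot)\Phi_1\ge0$ do the work, exactly as in the paper's $h_\varepsilon$ computation.

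The one place where your write-up is genuinely under-detailed is the passage to the limit, which is where the paper spends most of its effort (Section \ref{Ssec.Comparison}). The convergence you invoke from \cite{LiLiu2019} is in $H^*(\Omega)$ only (estimate \eqref{Convergence of the discretization}), and convergence in $H^*(\Omega)$ does \emph{not} by itself preserve pointwise a.e.\ inequalities. The paper resolves this by combining the $H^*$-convergence of $U_{n_\tau}-V_{n_\tau}$ with the uniform $L^{1+m}$ bound coming from the energy decay \eqref{discrete Decay of the Energy} and \eqref{Control of the energy}, extracting a weak $L^{1+m}$ limit, identifying it with the strong $H^*$ limit, and then testing against the characteristic function of a hypothetical bad set to reach a contradiction (equivalently: the cone of nonpositive functions is convex and closed, hence weakly closed, in $L^{1+m}(\Omega)$). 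You correctly flag that the $L^{1+m}$ bounds are ``needed later for compactness,'' so the ingredient is present, but the argument that order survives the limit should be spelled out; as written, ``the discrete inequalities then pass to the limit'' hides the only nontrivial functional-analytic point of the whole proof.
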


%
%

The proof of the comparison result is based on a suitable discretization in the spirit of \cite{LiLiu2019}, together with a clever limiting process: we first prove comparison for the Euler implicit scheme associated to the $H^*$ ``gradient flow structure'' of \eqref{CDP}. We study the discretized problem in Section \ref{Discrete problem}, where we show existence, uniqueness of solution, and a T-contractivity property in a weighted $L^1$ space, cf. Theorem \ref{Discrete T-contractivity L1Phi1}. The T-contractivity estimate at the discretized level, allows us to prove a discrete comparison principle, cf. Corollary \ref{Discrete Comparison Principle}. This last result goes all the way through the limiting process and allows to prove the desired comparison principle in the continuous framework, cf. Section \ref{section: Continuous problem}.
This method diverges from conventional PDE techniques. Typically, the limit solution from a discretization process is obtained through  compactness arguments, and loses the well-ordering property. In our case, however, the uniqueness of gradient flow solution allows to keep  the desired comparison.  Despite the fact that classical solutions and viscosity solutions are the natural classes of solutions where the comparison principle makes sense pointwise \cite{Namba2018}, we prove the result above for solutions in the weak space $H^*(\Omega)\cap L^{1+m}(\Omega)$, which introduces an additional level of difficulty. 

A direct -- and important -- consequence of the comparison principle and of the time-scaling are the Benilan-Crandall time monotonicity formulae that we present below. Let us emphasize that achieving any time monotonicity in PDEs involving the Caputo time derivative is considerably more challenging than with the classical time derivative, due to the ``memory effect'': when $\Dc f(t)\le 0$, we can only deduce a stability estimate, $f(t)\le f(0)$, rather than the ``full'' monotonicity, i.e. $f(t)\le f(t_0)$  for every $t_0 < t$.
\begin{theorem}[Time monotonicity formulae]\label{Thm Benilan-Crandall}
  Assume \eqref{K1} and \eqref{K}, then for any $u$ $H^*$-solution of \eqref{CDP} with nonnegative initial datum $u_0\in L^{1+m}(\Omega)\cap H^*(\Omega)$, it holds that:
  \begin{enumerate}[label=\roman*)]
    \item If $0<m<1$, then the function $$t\mapsto t^{\frac{-\alpha}{1-m}}u(t,x)$$ is non increasing for any $t>0$ and a.e $x\in\Omega$.
    \item If $m>1$, then the function $$t\mapsto t^{\frac{\alpha}{m-1}}u(t,x)$$ is non decreasing for  any $t>0$ and a.e $x\in\Omega$.
  \end{enumerate}
\end{theorem}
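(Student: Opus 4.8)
The plan is to exploit the scaling invariance of \eqref{CPME} together with the comparison principle from Theorem \ref{Comparison Principle}, in the classical Benilan--Crandall spirit, but with the twist that the Caputo derivative forces us to rescale time in a way compatible with the kernel $k(t)=t^{-\alpha}/\Gamma(1-\alpha)$. First I would check the scaling: if $u(t,x)$ is an $H^*$-solution of \eqref{CDP} with datum $u_0$, then for $\lambda>0$ the rescaled function $u_\lambda(t,x):=\lambda^{\frac{\alpha}{m-1}}\,u(\lambda t,x)$ is again an $H^*$-solution with the \emph{same} initial datum $u_0$. Indeed, the Caputo derivative satisfies $\Dc[u(\lambda\,\cdot\,)](t)=\lambda^{\alpha}(\Dc u)(\lambda t)$ (a direct change of variables in the convolution defining $k*(u-u_0)$, using $k(\lambda t)=\lambda^{-\alpha}k(t)$), so
\begin{equation*}
\Dc u_\lambda(t,x)=\lambda^{\frac{\alpha}{m-1}}\lambda^{\alpha}(\Dc u)(\lambda t,x)=-\lambda^{\frac{\alpha}{m-1}+\alpha}\A u^m(\lambda t,x)=-\A\bigl(u_\lambda^m\bigr)(t,x),
\end{equation*}
since $\lambda^{\frac{\alpha}{m-1}+\alpha}=\bigl(\lambda^{\frac{\alpha}{m-1}}\bigr)^{m}$ precisely because $\tfrac{\alpha}{m-1}\cdot m=\tfrac{\alpha}{m-1}+\alpha$. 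The boundary and initial conditions are clearly preserved (note $u_\lambda(0,\cdot)=\lambda^{\frac{\alpha}{m-1}}u(0,\cdot)$; to get the same datum one instead uses the initial condition is attained as a limit and the prefactor is absorbed — more carefully, one compares $u$ with $u_\lambda$ directly as solutions with, respectively, data $u_0$ and $\lambda^{\frac{\alpha}{m-1}}u_0$). For $m>1$ we have $\tfrac{\alpha}{m-1}>0$, so for $\lambda\ge 1$, $\lambda^{\frac{\alpha}{m-1}}u_0\ge u_0$ when $u_0\ge 0$; for $0<m<1$ the exponent $\tfrac{\alpha}{1-m}>0$ and one argues with the corresponding sign.

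Next I would feed this into the comparison principle. Fix $0<m<1$ and $u_0\ge 0$. Let $\lambda\ge 1$. Then $u_\lambda(0,\cdot)=\lambda^{-\frac{\alpha}{1-m}}u_0\le u_0$ a.e.\ in $\Omega$, hence by Theorem \ref{Comparison Principle} we get $u_\lambda(t,x)\le u(t,x)$ for every $t>0$ and a.e.\ $x$, i.e.
\begin{equation*}
\lambda^{-\frac{\alpha}{1-m}}\,u(\lambda t,x)\le u(t,x)\qquad\text{for all }\lambda\ge 1,\ t>0,\ \text{a.e. }x.
\end{equation*}
Setting $s=\lambda t\ge t$ this reads $s^{-\frac{\alpha}{1-m}}u(s,x)\le t^{-\frac{\alpha}{1-m}}u(t,x)$, which is exactly the claimed monotonicity of $t\mapsto t^{-\frac{\alpha}{1-m}}u(t,x)$. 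The case $m>1$ is symmetric: now $u_\lambda(0,\cdot)=\lambda^{\frac{\alpha}{m-1}}u_0\ge u_0$ for $\lambda\ge 1$, comparison gives $u_\lambda(t,x)\ge u(t,x)$, i.e.\ $\lambda^{\frac{\alpha}{m-1}}u(\lambda t,x)\ge u(t,x)$, so $t\mapsto t^{\frac{\alpha}{m-1}}u(t,x)$ is non-decreasing. (The case $m=1$ is excluded since the rescaling degenerates, consistently with the statement.)

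I expect the main obstacle to be the rigorous justification that $u_\lambda$ is an admissible $H^*$-solution attaining the rescaled datum $\lambda^{\pm\alpha/|m-1|}u_0$ — that is, verifying the membership conditions $k*(u_\lambda-u_{\lambda,0})\in W^{1,2}((0,T);H^*(\Omega))$ with vanishing trace at $t=0$, and $|u_\lambda|^{m-1}u_\lambda\in L^2((0,T);H(\Omega))$ — because the time-rescaling changes the interval of integration and one must track the kernel's homogeneity through the convolution and the weak formulation \eqref{strong H solution} carefully; the needed identity $\Dc[f(\lambda\cdot)](t)=\lambda^\alpha(\Dc f)(\lambda t)$ and the scaling of the $H^*$ pairing are the crux. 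Once $u_\lambda$ is known to be \emph{the} unique $H^*$-solution with its datum (uniqueness from Theorem \ref{Comparison Principle}), the comparison step is immediate and the monotonicity follows by letting $\lambda$ range over $[1,\infty)$ and relabelling, with no further PDE analysis required. A minor point is to upgrade the ``for every $t>0$, a.e.\ $x$'' conclusion to genuine monotonicity in $t$ for a.e.\ fixed $x$, which follows from the continuity in time of $H^*$-solutions (valued in $H^*(\Omega)$) together with a standard Fubini/null-set argument.
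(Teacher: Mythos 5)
Your proposal is correct and follows essentially the same route as the paper: the same rescaling $u_\lambda(t,x)=\lambda^{-\frac{\alpha}{1-m}}u(\lambda t,x)$ (which equals your $\lambda^{\frac{\alpha}{m-1}}u(\lambda t,x)$), the same verification via the homogeneity $\Dc[f(\lambda\cdot)](t)=\lambda^\alpha(\Dc f)(\lambda t)$, and the same application of the comparison principle of Theorem \ref{Comparison Principle} to the ordered initial data. The only cosmetic difference is that for $m>1$ the paper takes $\lambda=t_0/t_1\le 1$ so that $u_\lambda(0,\cdot)\le u_0$, whereas you take $\lambda\ge1$ and reverse the inequality; the two are equivalent.
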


Section \ref{section: Smoothing} is devoted to the proof of the new $L^p-L^\infty$ smoothing effects and boundary estimates.  As already mentioned, the only $L^p-L^\infty$ regularizing effects involving the Caputo time derivative present in the literature have been established for the  Heat equation. We refer to \cite[Theorem 3.3.]{KSVZ} in the case of unbounded domains: there, the proof relies on the representation formula and the explicit fundamental solution. For the Heat equation on bounded domains, smoothing effects for the time fractional problem can be inherited from the classical problem with $\alpha=1$, combining the estimates in \cite[Theorem 6.5]{CGV Classical} with \cite[Proposition 2.2.2]{GalWarma}.
 In our setting the proof is much more involved, due to the nonlinearity and the lack of a representation formula.

\begin{theorem}[$L^p-L^\infty$ Smoothing effects]\label{Thm smoothing effect}
  Let $m>0$, $m\neq1$, $N>2s$. Assume that \eqref{K1} and \eqref{K} hold. Given a $H^*$-solution $u$ with initial datum $u_0\in L^{p}(\Omega)\cap  H^*(\Omega)$ with $p>1$ satisfiying
  \begin{equation}\label{Condition for p}
    \begin{cases}
      p\ge 1+m, & \mbox{if } 1+m>\frac{N}{2s}, \\
      p>\frac{N}{2s}, & \mbox{if }\frac{N}{2s}\ge 1+m,
    \end{cases}
  \end{equation}
  then, $u$ satisfies
\begin{equation}\label{Smoothing effects}
  \|u(t)\|_\infty\le\ka\,\frac{\|u_0\|_p^{\frac{1}{m}}}{t^\frac{\alpha}{m}}\qquad\quad\forall t>0\,,
\end{equation}
with $\ka>0$ depending only on $N,s,\alpha,p,m,\Omega$.
\end{theorem}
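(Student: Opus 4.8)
The plan is to adapt the Green function method of \cite{BV-PPR1,BII} to the Caputo setting, using as the crucial new ingredient the time monotonicity formula of Theorem \ref{Thm Benilan-Crandall} to compensate for the lack of a semigroup/representation formula. The starting observation is that, by testing the equation \eqref{strong H solution} against $\varphi = \delta_{x_0}$ (more precisely, against mollifications and then passing to the limit, using \eqref{K1} to control the Green function), one obtains a pointwise-in-$x$ integral identity of the form
\begin{equation*}
  [k*(u(\cdot,x_0)-u_0(x_0))](t) = -\int_\Omega \G(x_0,y)\,u^m(t,y)\dy\,,
\end{equation*}
or, applying the inverse kernel $\ell$,
\begin{equation*}
  u(t,x_0) = u_0(x_0) - \Big[\ell * \int_\Omega \G(x_0,\cdot)\,u^m(\cdot,y)\dy\Big](t)\,.
\end{equation*}
This is the analogue of the Duhamel-type representation used in \cite{BV-PPR1}; the weight $\ell(t)=t^{\alpha-1}/\Gamma(\alpha)$ plays the role of the heat semigroup's smoothing.

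\textbf{Key steps.} First I would establish an \emph{absolute} (initial-data-independent for $m>1$, and the dual statement for $m<1$) bound for the $H^*$-norm or an $L^1_{\Phi_1}$-type quantity, exploiting that $\int_\Omega u^m \Phi_1 \dx$ controls (via \eqref{K1} and Kato's inequality \eqref{K}) a fractional differential inequality for $\|u(t)\|_{H^*}$ or for $Y(t):=\int_\Omega u(t)\Phi_1\dx$; the comparison with the fractional ODE $D_t^\alpha Y = -c\,Y^m$ (whose solutions and their sharp decay are recalled in Appendix \ref{sec: FODE}) yields $Y(t)\lesssim t^{-\alpha/m}\|u_0\|_{\text{something}}$, which is the correct rate. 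Second, I would run the iteration: set $U_p(t):=\|u(t)\|_p$ and use the representation identity together with Hölder/Young inequalities in the $y$-variable against the Green function kernel $|x-y|^{-(N-2s)}\in L^r$ for $r<N/(N-2s)$, to bound $\|u(t)\|_{q}$ in terms of a time-convolution ($\ell *\,\cdot$) of $\|u(s)\|_{p}^{m}$-type quantities with a gain of integrability exponent. The time monotonicity from Theorem \ref{Thm Benilan-Crandall} is what lets one pull the time-integral $\ell * (s^{-\theta}) \asymp t^{\alpha-\theta}$ out cleanly: because $t^{\alpha/(m-1)}u(t)$ (resp. $t^{-\alpha/(1-m)}u(t)$) is monotone, $u(s)$ on $[0,t]$ is controlled by $u(t)$ times an explicit power of $s/t$, so the convolution integrals converge and produce exactly the power $t^{-\alpha/m}$ after finitely many steps of a Moser-type iteration on the exponents $p_n\to\infty$. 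Tracking the constants through the (geometrically convergent) iteration gives $\ka$ depending only on $N,s,\alpha,p,m,\Omega$.

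\textbf{Main obstacle.} The hard part will be making the first step — the $L^p \to L^1_{\Phi_1}$ or $L^p\to H^*$ ``entry'' estimate with the \emph{sharp} time power $t^{-\alpha/m}$ — rigorous, since here one genuinely needs the fractional ODE comparison (Theorem \ref{Comparison Principle} applied to sub/supersolutions of the form $C(t)\Phi_1^{1/m}$ or separate-variables profiles) rather than the energy/entropy dissipation identities available when $\alpha=1$; the memory term destroys the usual $\frac{d}{dt}\|u\|_{H^*}^2 = -2\int u^{m+1}$ identity. A second delicate point is the condition \eqref{Condition for p}: when $1+m \le N/2s$ one cannot start the iteration from $L^{1+m}$ directly because the first Green-function convolution would not land in any $L^q$ with $q>p$; one must instead first show an $L^p\to L^{1+m}$ (or $L^p\to L^{N/2s+}$) preliminary regularization, again via the representation formula and the monotonicity formula, and only then enter the Moser scheme — so the bootstrap has a genuinely two-tier structure, exactly as reflected in the dichotomy of \eqref{Condition for p}. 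Finally, justifying the pointwise duality testing against $\G(x_0,\cdot)$ requires an approximation argument (smooth operators, bounded data) and a passage to the limit using uniqueness from Theorem \ref{Comparison Principle}, which is routine but must be stated.
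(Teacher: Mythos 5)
You have correctly identified the two load-bearing ingredients (a duality identity through the Green function, plus the time monotonicity of Theorem \ref{Thm Benilan-Crandall}), but your representation formula is transposed, and this sends the rest of the argument down a route that is both unnecessary and hard to close. Testing \eqref{strong H solution} with $\varphi=\delta_{x_0}$ gives $\int_\Omega \Dc u(t,x)\,\G(x_0,x)\dx=-u^m(t,x_0)$: the Green function pairs with $\Dc u$, and the \emph{pointwise trace} $u^m(\cdot,x_0)$ sits alone on the right. Convolving with $\ell$ then yields the correct identity \eqref{Fundamental integral formula},
\begin{equation*}
\int_\Omega\bigl(u_0(x)-u(t,x)\bigr)\G(x_0,x)\dx=\bigl[\ell*u^m(\cdot,x_0)\bigr](t)\,,
\end{equation*}
whereas you have written $\G$ acting on $u^m(t,\cdot)$ and the time convolution acting on $u(\cdot,x_0)-u_0(x_0)$. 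Your version corresponds to $\Dc u=-\AI u^m$ rather than to the dual form $\AI\Dc u=-u^m$ of the equation, and even if repaired it would leave the pointwise value $u_0(x_0)$ on the right-hand side, which cannot produce an $L^p\to L^\infty$ gain. With the correct identity the proof is a \emph{one-step} argument, not a Moser iteration: for nonnegative data the left side is bounded above by $\int_\Omega u_0\,\G(x_0,\cdot)\dx$, the monotonicity formula bounds the right side \emph{from below} by $c\,t^\alpha u^m(t,x_0)$ (using $u^m(\tau,x_0)\ge(\tau/t)^{\alpha m/(1-m)}u^m(t,x_0)$ for $m<1$, and the analogous bound on $[t,2t]$ for $m>1$, the Beta-function integral being finite), and a single Hölder inequality against $\G(\cdot,x_0)\in L^q$ for $q<N/(N-2s)$ (Lemma \ref{Lem.Green}) closes the estimate, which is exactly where $p>N/2s$ enters. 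The paper explicitly designs the argument to \emph{avoid} Moser iteration, since restarting an iteration at intermediate times is obstructed by the memory effect; your scheme would generate nested convolutions $\ell*\ell*\cdots$ with no clean way to re-initialize.

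Two further misreadings. The ``entry estimate'' via comparison with the fractional ODE $\Dc Y=-cY^m$ is not part of this proof at all: that machinery produces the \emph{lower} bound on $\|u(t)\|_{L^1_{\Phi_1}}$ in Theorem \ref{Thm Sharp Decay Lp}, while here the only use of the weight $\Phi_1$ would be superfluous. Likewise, the dichotomy in \eqref{Condition for p} does not encode a two-tier bootstrap: $p\ge 1+m$ is only needed so that $u_0\in L^{1+m}(\Omega)$ and the solution exists in the energy class of the fractional gradient flow; the analytic threshold is always $p>N/2s$. Finally, your write-up omits the reduction to nonnegative solutions: the monotonicity formula is only available for $u_0\ge0$, so sign-changing data must be handled by splitting $u_0=(u_0)_+-(u_0)_-$, checking $(u_0)_\pm\in L^p\cap H^*(\Omega)$ via the Hardy--Littlewood--Sobolev inequality, and sandwiching $u$ between the solutions generated by $\pm(u_0)_\mp$ using Theorem \ref{Comparison Principle}.
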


Surprisingly, the decay rate $t^{-\frac{\alpha}{m}}$ in \eqref{Smoothing effects} is significantly different than in the case $\alpha=1$. \normalcolor For the porous medium equation with classical time derivative, the existence of the Friendly Giant - separate-variable solution of the form $S(x)\,t^{-\frac{1}{m-1}}$ - provides that the decay of the $L^\infty$-norm matches with the time scaling:
 \begin{equation}\label{Friendly Gigant}
    \|u(t)\|_\infty \le K t^{-\frac{1}{m-1}}\qquad\forall t>1,\quad m>1\,,
  \end{equation}
   with a universal constant $K>0$, see \cite{BSV2013,BV-PPR1}.
   In addition, the  smoothing effects for small times provided in \cite{BII,BV-PPR1} for the classical porous medium reads as
\begin{equation}\label{Classical Smoothings}
 \|u(t)\|_\infty\le\ka\,\|u_0\|_p^{2sp\vartheta_p} t^{-N\vartheta_p}\qquad \forall 0<t<1\normalcolor ,\quad\alpha=1,\quad m>0\,,
\end{equation}
  where $u_0\in L^p(\Omega)$ with $p>\frac{N(1-m)}{2s}$ and $\vartheta_p=(2sp-N(1-m))^{-1}$. Interestingly, the decay \eqref{Smoothing effects} corresponds to the decay of solutions to the equation with $\alpha=1$ and $\Omega=\mathbb{R}^N$, see \cite[Theorem 3.1]{BE 2023}.


The condition $p\ge 1+m$ in Theorem \ref{Thm smoothing effect} ensures that solutions are in the energy space needed to define fractional gradient flows, see \cite{GA,LiLiu2019}. On the other hand, the condition $p>\frac{N}{2s}$ seems to be a natural threshold for smoothing effects of \eqref{CDP}. The key point is that the ``memory'' effect makes $u$ behave like the solution of a suitable elliptic problem which always remembers $u_0$,
$$u^m(t,x_0)\sim \int_\Omega \G(x,x_0) u_0(x)\dx\,,$$
where $\G$ represents the Green function of the operator $\A$, see for instance \eqref{Fundamental integral formula}. Thus, the boundedness of $u$ depends directly on the integrability of $\G$, which by \eqref{K1} is $L^q(\Omega)$ with $q<\frac{N}{N-2s}$. This result is also motivated by the explicit solution of the Caputo Heat Equation in $\mathbb{R}^N$, where bounded solutions require $u_0\in L^p(\Omega)$ with $p>\frac{N}{2s}$ due to the unboundedness of the fundamental solution, see \cite{CQW,KSVZ}. This  shows a significant discrepancy between the smoothing effects of Caputo PME and classical PME. We recall that for the latter  we require $u_0\in L^p(\Omega)$ with $p>\frac{N(1-m)}{2s}$, which implies the relaxation of the integrability condition for $u_0$ when $m>0$ gets bigger. See the picture below.
\color{black}
\begin{figure}[H]
\centering
	\begin{tikzpicture}[yscale=0.6,xscale=0.6]
	
    \node[scale=1, draw=cyan,rounded corners,text width=2.5cm]  (Classical) at (5.5,7.7) {Classical PME};	

	\fill[cyan!30,opacity=1, line width=0mm] (0,3) -- (0,6.8) -- (5.5,6.8) -- (5.5,1.7) -- cycle;
	\fill[green!30,opacity=1, line width=0mm] (5.5,1.7) -- (5.5,6.8) -- (10.9,6.8) -- (10.9,1.7)-- cycle;
	
	\node[scale=1.2] (00) at (0,-0.5) {$\mathbf{0}$};
	\node (0) at (-0.5,0){};
	\node (0-1) at (0,-0.3){};
	\node (p-axis) at (0,7){};
	\node (p) at (-0.5,6.4) {p};
	
	\node[scale=1.2] (11) at (9.5,-0.5) {$\mathbf{1}$};
	\node (1,1) at (9.5,2) {};
	\node (m-axis) at (11.1,0) {};
	\node (1) at (9.5,-0.08){};
	\node (1-1) at (9.5,0.3){};
	\node (1-2) at (9.5,-0.3){};
	\node [scale=1.1](m) at (10.75,-0.5){${m}$};
	
	\path[line width=0.6mm,color=darkgreen,dashed,-] (0,3) edge node[pos=0.5,opacity=2,below=1mm,scale=0.9]
        {$\mathbf{p=\frac{N(1-m)}{2s}}$}(5.5,1.7);
	\path[line width=0.5mm,->] (0) edge (m-axis);
	\path[line width=0.5mm,->] (0-1) edge (p-axis);
	\path[line width=0.5mm,-] (1-1) edge (1-2);
	
	\node[scale=1] (N/2s**) at (-0.5,3){$\frac{N}{2s}$};
	\node[scale=1.8] (N/2s*) at (-0.25,5.12){};
	\node[scale=1.8] (N/2s) at (0.02,3){-};
	\node[scale=1.8] (0,1) at (0.02,1.7){-};
	\node(1y) at (-0.5,1.7) {$\mathbf{1}$};
	
	
	\node (mc-axis) at (6,2.06){};
	\node (mc-axis*) at (6,6){};
	
	\node (mc) at (5.5,-0.6){$\frac{N-2s}{N}$};
	\node (mc-1) at (5.5,0.3){};
	\node (mc-2) at (5.5,-0.3){};
	
	\path[line width=0.4mm,-] (mc-1) edge (mc-2);

	\node[color=black,scale=1.2] (LpLpc) at (3,4.5) {$L^p\to L^\infty$};
	\node[color=black,scale=1.2] (L1Lpc) at (7.9,4.5) {$L^1\to L^\infty$};
	
	\draw[dashed,color=blue,opacity=0.4,line width=0.4mm,-] (5.5,0) -- (5.5,6.8);
	\draw[dashed,color=darkgreen,opacity=0.4,line width=0.4mm,-] (9.5,0) -- (9.5,6.8);
	
	\path[line width=0.4mm,color=darkgreen,->] (5.5,1.7) edge (10.9,1.7);
	\path[line width=0.5mm,-] (1-1) edge (1-2);
	\fill[white] (5.5,1.7) circle (0.8mm);
	\draw[darkgreen] (5.5,1.7) circle (0.8mm);


    \node[scale=1, draw=orange!90!black,rounded corners,text width=2.5cm]  (Caputo) at (19.3,7.7) { \;Caputo PME};

    \fill[orange!40,opacity=1, line width=0mm] (13.7,3) -- (13.7,6.8) -- (24.7,6.8) -- (24.7,4)-- (19.7,3) --cycle;
	
	\node[scale=1.2] (00) at (13.7,-0.5) {$\mathbf{0}$};
	\node (0) at (13.2,0){};
	\node (0-1) at (13.7,-0.3){};
	\node (p-axis) at (13.7,7){};
	\node (p) at (13.2,6.4) {p};

	\node (1,1) at (23.7,2) {};
	\node (m-axis) at (24.9,0) {};
	\node (1) at (23.9,-0.08){};
	\node (1-1) at (23.7,0.3){};
	\node (1-2) at (23.7,-0.3){};
	\node [scale=1.1](m) at (24.5,-0.5){${m}$};
	
	\path[line width=0.6mm,color=darkgreen,dashed,-] (13.7,3) edge (19.7,3);
	\path[line width=0.5mm,->] (0) edge (m-axis);
	\path[line width=0.5mm,->] (0-1) edge (p-axis);

	\node[scale=1] (N/2s**) at (13.2,3){$\frac{N}{2s}$};
	\node[scale=1.8] (N/2s*) at (13.45,3.12){};
	\node[scale=1.8] (N/2s) at (13.72,3){-};
	\node[scale=1.8] (0,1) at (13.72,1.7){-};
	\node(1y) at (13.2,1.7) {$\mathbf{1}$};
	
	\node[scale=1.2,color=darkgreen] (pc=1*) at (19.85,1.95) {};
	\node[scale=1.2,color=darkgreen] (pc=1**) at (19.2,1.94) {};
	\node[scale=1.2,color=darkgreen] (pc=1***) at (14.62,2) {};
	
	\node (N/2s-1) at (19.7,-0.6){$\frac{N}{2s}-1$};
	\node (mc-1) at (19.7,0.3){};
	\node (mc-2) at (19.7,-0.3){};
    \path[line width=0.4mm,-] (mc-1) edge (mc-2);

    \node[scale=1.2](1x) at (17,-0.6) {$\mathbf{1}$};
	\node (Ns-1) at (17,0.3){};
	\node (Ns-2) at (17,-0.3){};
    \path[line width=0.4mm,-] (Ns-1) edge (Ns-2);

	\node[color=black,scale=1.4] (LpLpc) at (18.5,5) {$L^p\to L^\infty$};
	
    \draw[dashed,color=red,opacity=0.4,line width=0.4mm,-] (17,0) -- (17,6.8);
	\draw[dashed,color=red,opacity=0.4,line width=0.4mm,-] (19.7,0) -- (19.7,6.8);
	\draw[color=darkgreen,fill=white] (19.7,2) circle;
	
	\path[line width=0.4mm,color=darkgreen,->] (19.7,3) edge node[pos=0.5,opacity=2,below=1mm,scale=1] { $\mathbf{p=1+m}$}(24.7,4);
	\fill[white] (19.7,3) circle (0.8mm);
	\draw[darkgreen] (19.7,3) circle (0.8mm);
	
	\end{tikzpicture}
\caption{On the left side, the smoothing effects \eqref{Classical Smoothings} for the PME with classical time derivative.\\ On the right side, the  smoothing effects \eqref{Smoothing effects} for the PME with Caputo time derivative.}
    \label{fig3}
    \end{figure}
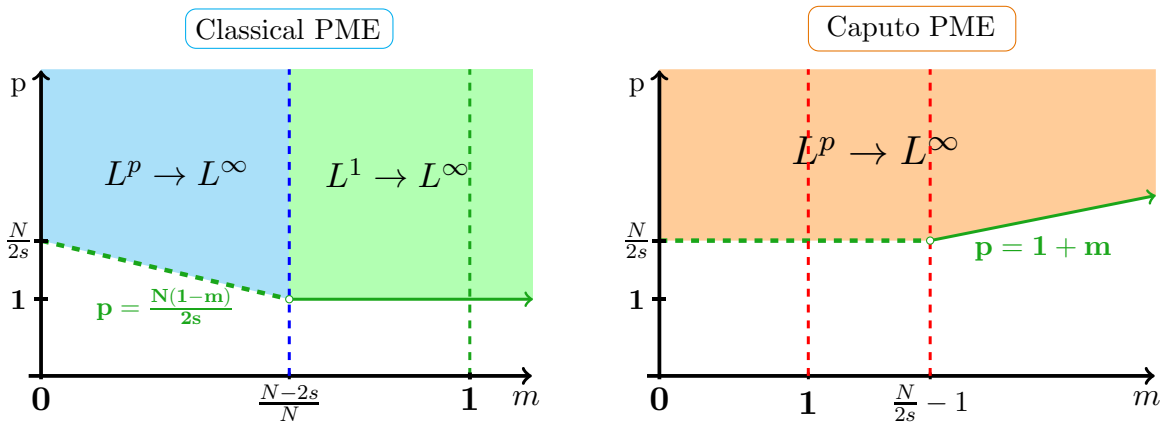

\normalcolor

The proof of Theorem \ref{Thm smoothing effect} is based on the so-called Green function method, introduced in \cite{BV-PPR1} for $m>1$ and in \cite{BII} for $m<1$. The key tool of the proof is  the pointwise bound shown in Proposition \ref{Fundamental pointwise estimate}, which plays the role of an ``almost representation formula''. This pointwise bound is a direct consequence of the dual formulation
\vspace{3mm}
\begin{equation*}
  \AI\Dc u=-u^m\,,
\end{equation*}
and  time monotonicity formula of Theorem \ref{Thm Benilan-Crandall}. This  approach replaces the classical Moser iteration, which seems more challenging due to the memory effect of the time derivative.

Thanks to the Green function method, we are able to show stronger boundary estimates, and in particular we ensure that solutions satisfy the zero Dirichlet boundary condition in a precise and quantitative way. The estimate we provide involves a parameter $\gamma>0$, which characterizes the behaviour of the first eigenfunction of $\A$. The value of $\gamma$ depends on the diffusion operator under consideration, see Section \ref{sec: assumptions} for further explanations.
\begin{theorem}[Upper boundary behaviour]\label{Thm Boundary behaviour}
   Let $m>0$, $m\neq 1$, $N>2s>\gamma$. Assume that \eqref{K2} and \eqref{K} hold. Given a $H^*$-solution $u$ with initial datum $u_0\in L^{p}(\Omega)\cap H^*(\Omega)$ with $p$ satisfying
  \begin{equation*}
    \begin{cases}
      p\ge 1+m, & \mbox{if } 1+m>\frac{N}{2s-\gamma}, \\
      p>\frac{N}{2s-\gamma}, & \mbox{if }\frac{N}{2s-\gamma}\ge 1+m,
    \end{cases}
  \end{equation*}
   then, we have
  \begin{equation}\label{Upper boundary estimates}
  \left\|\frac{u^m(t)}{\mbox{\rm dist}(\cdot,\partial\Omega)^{\gamma}}\right\|_\infty\le\ka\frac{\|u_0\|_p}{t^\alpha},\qquad\quad\forall t>0\,,
\end{equation}
with $\ka$ depending only on $N,s,\alpha,p,m,\Omega$.
\end{theorem}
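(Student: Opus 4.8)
The plan is to feed the boundary–improved kernel estimate \eqref{K2} into the pointwise ``almost representation formula'' of Proposition~\ref{Fundamental pointwise estimate}. First I would reduce to nonnegative data: since the nonlinearity $r\mapsto|r|^{m-1}r$ is odd and $-|u_0|\le u_0\le|u_0|$, the comparison principle (Theorem~\ref{Comparison Principle}) gives $|u(t,x)|\le\widetilde u(t,x)$ a.e., where $\widetilde u$ is the solution with datum $|u_0|$; as $\||u_0|\|_p=\|u_0\|_p$, it is enough to prove \eqref{Upper boundary estimates} for nonnegative data, hence for nonnegative solutions, which I assume from now on. Next I would invoke Proposition~\ref{Fundamental pointwise estimate} — which follows from the dual formulation $\Dc(\AI u)=-u^m$ after convolving in time with the inverse kernel $\ell$ and using the time monotonicity of Theorem~\ref{Thm Benilan-Crandall} — in the form: for a.e. $x_0\in\Omega$ and every $t>0$,
\[
  u^m(t,x_0)\ \le\ \frac{\ka_0}{t^{\alpha}}\int_\Omega\G(x_0,x)\,u_0(x)\dx ,
\]
with $\ka_0=\ka_0(\alpha,m)>0$.

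The next step extracts the distance weight. Writing $d(x_0):=\mathrm{dist}(x_0,\partial\Omega)$, assumption \eqref{K2} sharpens \eqref{K1} by a boundary factor, so that, discarding the remaining bounded factors, $\G(x_0,x)\le c_{2,\Omega}\,d(x_0)^{\gamma}\,|x_0-x|^{-(N-2s+\gamma)}$; hence
\[
  \frac{u^m(t,x_0)}{d(x_0)^{\gamma}}\ \le\ \frac{\ka_0\,c_{2,\Omega}}{t^{\alpha}}\int_\Omega\frac{u_0(x)}{|x_0-x|^{N-2s+\gamma}}\dx .
\]
Since $2s>\gamma$ we have $0<N-2s+\gamma<N$, so $|x_0-\cdot|^{-(N-2s+\gamma)}$ is locally integrable, and Hölder's inequality with conjugate exponents $p,p'$ gives
\[
  \int_\Omega\frac{u_0(x)}{|x_0-x|^{N-2s+\gamma}}\dx\ \le\ \|u_0\|_p\left(\int_\Omega|x_0-x|^{-(N-2s+\gamma)p'}\dx\right)^{1/p'}.
\]
Because $\Omega$ is bounded, the last integral is finite and bounded uniformly in $x_0\in\Omega$ exactly when $(N-2s+\gamma)p'<N$, i.e. $p>\tfrac{N}{2s-\gamma}$; this is precisely what the hypothesis on $p$ guarantees in both branches of the dichotomy (if $1+m>\tfrac{N}{2s-\gamma}$ then $p\ge 1+m>\tfrac{N}{2s-\gamma}$, and otherwise $p>\tfrac{N}{2s-\gamma}$ directly), while the hypotheses also give $p\ge 1+m$, which is what is needed, upstream, for $u_0$ to belong to the energy class $L^{1+m}(\Omega)\cap H^*(\Omega)$ where Theorems~\ref{Comparison Principle} and \ref{Thm Benilan-Crandall} apply.

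Setting $C_p:=\sup_{x_0\in\Omega}\big\||x_0-\cdot|^{-(N-2s+\gamma)}\big\|_{L^{p'}(\Omega)}<\infty$ and combining the three displays, I obtain, for a.e. $x_0\in\Omega$ and every $t>0$,
\[
  \frac{u^m(t,x_0)}{d(x_0)^{\gamma}}\ \le\ \frac{\ka_0\,c_{2,\Omega}\,C_p}{t^{\alpha}}\,\|u_0\|_p ,
\]
and taking the essential supremum over $x_0$ yields \eqref{Upper boundary estimates} with $\ka=\ka_0\,c_{2,\Omega}\,C_p$, which depends only on $N,s,\alpha,p,m,\Omega$.

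The hard part is not this final deduction but the input Proposition~\ref{Fundamental pointwise estimate}: the Caputo memory rules out a Moser iteration, so one instead integrates the dual equation against $\ell$ and must convert the monotonicity of Theorem~\ref{Thm Benilan-Crandall} into the lower bound $\int_0^t\ell(t-\tau)\,u^m(\tau,x_0)\dtau\ \ge\ c(\alpha,m)\,t^{\alpha}u^m(t,x_0)$, which calls for separate arguments when $0<m<1$ (compare $u(\tau,x_0)$ with $u(t,x_0)$ on $(0,t)$ and evaluate a Beta-type integral) and when $m>1$ (evaluate the $\ell$–convolved identity at time $2t$ and restrict the integral to $(t,2t)$, where $u(\tau,x_0)\gtrsim u(t,x_0)$). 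Granting that, the present statement is mostly bookkeeping — that \eqref{K2} isolates exactly the weight $d(\cdot)^{\gamma}$ and that the integrability threshold $p>N/(2s-\gamma)$ coincides with the stated hypotheses; the only subtlety worth flagging is that the reduction to nonnegative data does not change the relevant norm, which is clear since $\||u_0|\|_p=\|u_0\|_p$.
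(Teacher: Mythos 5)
Your proposal is correct and follows essentially the same route as the paper: feed the fundamental pointwise bound of Proposition~\ref{Fundamental pointwise estimate} (itself built from the dual formulation, the $\ell$-convolution, and the time monotonicity of Theorem~\ref{Thm Benilan-Crandall}) into the boundary-weighted kernel bound of \eqref{K2}, drop the $-u(t_1,\cdot)$ term by nonnegativity, and close with H\"older's inequality using $p>\tfrac{N}{2s-\gamma}$. The only difference is that the paper's in-section version (Theorem~\ref{thm: Upper boundary estimates 2}) simply assumes $u_0\ge 0$, whereas you add the reduction to nonnegative data via comparison, in the same spirit as Step~2 of the paper's proof of Theorem~\ref{Thm smoothing effect}.
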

The  upper bound (\ref{Upper boundary estimates})  is sharp, since at least for ``big initial data'' we provide matching lower bounds:
\begin{theorem}[Global Harnack Principle for ``big'' initial data]\label{Thm Boundary behaviour2}  Under the assumptions of Theorem \ref{Thm Boundary behaviour}, assume moreover that $m>\frac{N-2s}{N+2s}$ and that the initial data also satisfies $u_0(x)\ge \c\,\mbox{\rm dist}(x,\partial\Omega)^\frac{\gamma}{m}$ for a.e. $x\in\Omega$ and some $\c>0$. Then, we have the following Global Harnack Principle
\begin{equation}\label{GHP}
  \frac{c_0}{1+t^{\alpha}}\le\frac{ u^m(t,x)}{\mbox{\rm dist}(x,\partial\Omega)^\gamma}\le \frac{c_1}{t^{\alpha}}\qquad\mbox{ for a.e. } x\in\Omega\quad\forall t>0\,.
\end{equation}
with $c_0,c_1>0$ depending only on $\c,N,s,\alpha,p,m,\Omega$ and $\|u_0\|_p$.
\end{theorem}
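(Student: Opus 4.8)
Since the upper bound in \eqref{GHP} is exactly the content of Theorem \ref{Thm Boundary behaviour} (in the given range of $p$, and noting $\frac{N}{2s-\gamma}$ is admissible), the only thing to prove is the lower bound $u^m(t,x)\ge \frac{c_0}{1+t^\alpha}\,\mbox{\rm dist}(x,\partial\Omega)^\gamma$. The plan is to build a global \emph{subsolution of separate--variables (``Friendly Giant'') type} and to let it compete with $u$ via the comparison principle of Theorem \ref{Comparison Principle}. This is the analogue, in the Caputo setting, of the Global Harnack Principle obtained for the classical equation in \cite{BSV2013,BFV2018}.

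\textbf{Step 1 (stationary profile).} I would first solve the nonlinear Dirichlet problem $\A W=\mu\,W^{1/m}$ in $\Omega$, $W=0$ on the lateral boundary, for a suitable $\mu>0$. When $m\ge1$ this is sublinear and a positive solution always exists; when $m<1$ it is superlinear, and the hypothesis $m>\frac{N-2s}{N+2s}$ is precisely the subcriticality threshold guaranteeing a positive solution. In both cases, combining the two--sided Green bound \eqref{K2} with the regularity theory for $\A$ one gets the matching estimate $W\asymp\Phi_1\asymp\mbox{\rm dist}(\cdot,\partial\Omega)^\gamma$, hence $\Psi:=W^{1/m}$ satisfies $\underline a\,\mbox{\rm dist}(x,\partial\Omega)^{\gamma/m}\le\Psi(x)\le\overline a\,\mbox{\rm dist}(x,\partial\Omega)^{\gamma/m}$, and by construction $\A\Psi^m=\A W=\mu W^{1/m}=\mu\Psi$. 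Next I would take $\tau$ to be the solution of the scalar Caputo ODE $\Dc\tau=-\mu\,\tau^m$ with $\tau(0)=\tau_0>0$; by the fractional ODE results recalled in Appendix \ref{sec: FODE}, $\tau$ is positive on $[0,\infty)$ and satisfies $\tau(t)\ge c_\tau(1+t^\alpha)^{-1/m}$ for all $t\ge0$, its large--time behaviour being $\tau(t)\asymp t^{-\alpha/m}$, consistent with the sharp decay exponent.

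\textbf{Step 2 (it is an exact solution; comparison).} The function $\underline u(t,x):=\tau(t)\,\Psi(x)$ is then an $H^*$-solution of \eqref{CDP} with initial datum $\tau_0\Psi$: pointwise $\Dc\underline u=\Psi\,\Dc\tau=-\mu\,\tau^m\Psi=-\tau^m\A\Psi^m=-\A\underline u^{\,m}$, and the duality identity \eqref{strong H solution} follows from $\int_\Omega W\varphi=\mu\int_\Omega\Psi\,\AI\varphi$ (self--adjointness of $\AI$), while the regularity requirements in the definition are inherited from those of $W$ and $\tau$; uniqueness in Theorem \ref{Comparison Principle} identifies $\underline u$ with that $H^*$-solution. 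Now choose $\tau_0:=\c/\overline a$, so that $\underline u(0,\cdot)=\tau_0\Psi\le\c\,\mbox{\rm dist}(\cdot,\partial\Omega)^{\gamma/m}\le u_0$ a.e.\ in $\Omega$ by the ``big initial data'' assumption. Since \eqref{K} holds, the comparison principle gives $u(t,x)\ge\underline u(t,x)=\tau(t)\Psi(x)$ for a.e.\ $x$ and all $t>0$; raising to the power $m$ and using Step 1,
\[
  u^m(t,x)\ \ge\ \tau(t)^m\,\Psi(x)^m\ \ge\ c_\tau^{\,m}\,\underline a^{\,m}\,(1+t^\alpha)^{-1}\,\mbox{\rm dist}(x,\partial\Omega)^\gamma\ =:\ \frac{c_0}{1+t^\alpha}\,\mbox{\rm dist}(x,\partial\Omega)^\gamma,
\]
which, together with the upper bound from Theorem \ref{Thm Boundary behaviour}, yields \eqref{GHP} with the stated dependence of the constants.

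\textbf{Main obstacle.} The technical heart is Step 1: the existence of the stationary profile $W$ and, above all, the \emph{matching} two--sided boundary estimate $W\asymp\mbox{\rm dist}(\cdot,\partial\Omega)^\gamma$ for the general (possibly nonlocal) operator $\A$ — this is exactly where the restriction $m>\frac{N-2s}{N+2s}$ is used and is sharp, since below it no such profile exists and the lower bound in \eqref{GHP} genuinely fails. A secondary point is the precise large--time rate $\tau(t)\gtrsim(1+t^\alpha)^{-1/m}$ for the scalar Caputo ODE: naive linearization only gives the weaker rate $(1+t^\alpha)^{-1}$, so one must use the sharp fractional--ODE asymptotics of Appendix \ref{sec: FODE}; one also needs to confirm that $\tau\Psi$ meets every regularity clause in the definition of $H^*$-solution so that Theorem \ref{Comparison Principle} applies verbatim.
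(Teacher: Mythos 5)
Your proposal is correct and follows essentially the same route as the paper: the upper bound is quoted from the boundary estimate, and the lower bound is obtained by comparing $u$ from below with the separate-variables solution $F(t)S(x)$, where $S^m$ solves the subcritical Lane--Emden problem (this is where $m>\frac{N-2s}{N+2s}$ enters), $S\asymp\mbox{dist}(\cdot,\partial\Omega)^{\gamma/m}$ by \cite{BFV2018CalcVar}, and $F$ obeys the two-sided Caputo ODE bounds of Theorem \ref{FODE 1}. Your bookkeeping of the time factor ($\tau(t)^m\gtrsim(1+t^\alpha)^{-1}$ rather than $\tau(t)\gtrsim(1+t^\alpha)^{-1}$) is in fact slightly more careful than the paper's final display.
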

The above Global Harnack Principle  \eqref{GHP}, follows by the comparison principle and from the form of the solution(s) of separation of variable.
The proof of the above results is contained in  Section \ref{section: Smoothing}.

The next natural question is whether the asymptotic behaviour of the solution is similar to the one with the classical time derivative. Surprisingly, the answer is negative. We have already commented on the discontinuity in $\alpha$ of the time behaviour, we now state the precise result: all the $L^p$-norms decay in time with the same rate as $t$ goes to infinity, for all $m>0$. In particular, we would like to focus on the fast diffusion regime $m\in (0,1)$, and compare the cases $\alpha=1$ and $0<\alpha<1$. In the latter case,  solutions do not extinguish in finite time,  contrary to the case $\alpha=1$, cf. \cite{BII}, and this is one of the biggest differences between the classical and the Caputo time derivatives.

\begin{theorem}[Sharp Time Decay of $L^p$-norms]\label{Thm Sharp Decay Lp} Assume \eqref{K1} and \eqref{K}. Let $u$ be a $H^*$-solution of \eqref{CDP} with nonnegative initial datum $u_0\in L^{p_0}(\Omega)\cap  H^*(\Omega)$ with $p_0>1$ such that
  \begin{equation*}
    \begin{cases}
      p_0\ge 1+m, & \mbox{if } 1+m>\frac{N}{2s}, \\
      p_0>\frac{N}{2s}, & \mbox{if }\frac{N}{2s}\ge 1+m.
    \end{cases}
  \end{equation*}
Then, for every $1\le p\le\infty$ it holds that
\begin{equation}\label{Sharp Decay Lp}
  \frac{c_0}{1+t^{\frac{\alpha}{m}}}\le \|u(t)\|_p\le\frac{c_1}{t^{\frac{\alpha}{m}}}\qquad\forall t>0\,,
\end{equation}
  where $c_0,c_1>0$ depends on $N,s,\alpha,p,m,\Omega,\,\|u_0\|_{p_0}$.
\end{theorem}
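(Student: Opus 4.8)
\textbf{Proof proposal for Theorem \ref{Thm Sharp Decay Lp}.}

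The plan is to reduce everything to the two endpoint norms $p=1$ and $p=\infty$, and to exploit the structural fact already emphasized in the introduction: the memory kernel forces $u$ to remember $u_0$ through the elliptic problem governed by $\G$. First I would establish the \emph{upper bound} for $p=\infty$. By the smoothing effect of Theorem \ref{Thm smoothing effect} we already have $\|u(t)\|_\infty\le\ka\,\|u_0\|_{p_0}^{1/m}\,t^{-\alpha/m}$ for all $t>0$, which is exactly the claimed rate $t^{-\alpha/m}$. For $1\le p<\infty$ I would interpolate: once I also know $\|u(t)\|_1$ stays controlled (which follows from the weighted-$L^1$ contractivity inherited from the discrete scheme, cf. Theorem \ref{Discrete T-contractivity L1Phi1}, giving at least $\|u(t)\|_{\LPhi}\le\|u_0\|_{\LPhi}$, hence a uniform bound after noting $\Phi_1\asymp\mbox{dist}(\cdot,\partial\Omega)^\gamma$), the bound $\|u(t)\|_p\le\|u(t)\|_\infty^{1-1/p}\|u(t)\|_1^{1/p}$ would still only give a decay rate $t^{-\alpha(1-1/p)/m}$, which is not sharp. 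So instead I would run the smoothing argument directly at level $p$: apply Theorem \ref{Thm smoothing effect} starting from time $t/2$ with datum $u(t/2)\in L^{p_0}$, combined with the Benilan--Crandall monotonicity of Theorem \ref{Thm Benilan-Crandall} to convert the $L^\infty$ bound at time $t$ into an $L^p$ bound that decays like $t^{-\alpha/m}$ for \emph{every} $p$. Concretely, for $0<m<1$ the function $t\mapsto t^{-\alpha/(1-m)}u(t,x)$ is nonincreasing, so $u(t,x)\le (t/t_0)^{\alpha/(1-m)}\cdot$(something), and coupling this with the $L^\infty$-smoothing at an intermediate time pins down the rate; for $m>1$ one uses the nondecreasing quantity $t^{\alpha/(m-1)}u(t,x)$ symmetrically.

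For the \emph{lower bound} I would use the comparison principle (Theorem \ref{Comparison Principle}) together with the explicit separate-variables solution. One constructs $U(t,x)=\tau(t)\,\Phi_1(x)^{1/m}$ (or the appropriate power) solving \eqref{CDP}: plugging in, $\A U^m=\tau(t)^m\lambda_1\Phi_1$, and $\Dc[\tau\Phi_1^{1/m}]=\Phi_1^{1/m}\Dc\tau$, so $\tau$ must solve the scalar fractional ODE $\Dc\tau=-\lambda_1\tau^m\,\Phi_1^{1-1/m}$ — here one needs $\gamma/m$ chosen so that $\Phi_1^{1-1/m}$ is (comparable to) a constant, i.e. one works with sub/supersolutions and the two-sided bound $\Phi_1\asymp\mbox{dist}^\gamma$ rather than an exact identity. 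The fractional ODE $\Dc\tau=-c\,\tau^m$ with $\tau(0)>0$ has, by the results recalled in Appendix \ref{sec: FODE}, a solution decaying exactly like $t^{-\alpha/m}$ as $t\to\infty$ (this is precisely the Vergara--Zacher non-extinction phenomenon in the range $0<m<1$: unlike $\alpha=1$ the solution never reaches zero). Taking $\tau(0)$ small enough that $U(0,\cdot)\le u_0$ — possible because of the hypothesis structure, or after a short waiting time using that $u(t_1)$ is strictly positive in the interior by the positivity statement in Theorem \ref{Comparison Principle} and the boundary lower bound ideas of Theorem \ref{Thm Boundary behaviour2} — comparison gives $u(t,x)\ge U(t,x)$, hence $\|u(t)\|_p\ge \tau(t)\|\Phi_1^{1/m}\|_p\gtrsim (1+t^{\alpha/m})^{-1}$. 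The $1+t^{\alpha/m}$ rather than $t^{\alpha/m}$ in the denominator accounts for the behavior near $t=0$ where $\tau(0)$ is just a positive constant.

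The main obstacle I anticipate is obtaining the \emph{sharp} upper rate $t^{-\alpha/m}$ for \emph{all} $p$ simultaneously, including small $p$ close to $1$: naive interpolation between $L^1$ and $L^\infty$ loses the exponent, so one genuinely needs to feed the Benilan--Crandall monotonicity into the Green-function pointwise estimate (Proposition \ref{Fundamental pointwise estimate}) to get an $L^p$-to-$L^\infty$-to-$L^p$ loop that closes with the correct power. The delicate point is that the Caputo derivative has no semigroup property, so one cannot simply restart the evolution at time $t/2$ for free — one must check that $u(t/2)$ lies in the right class $L^{p_0}\cap H^*$ with norm controlled by $\|u_0\|_{p_0}$, which is exactly what the a priori estimates from the discretized problem and the smoothing effect at an earlier time provide. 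A secondary technical nuisance is that the separate-variables subsolution only solves the equation up to the comparability constant in $\Phi_1\asymp\mbox{dist}^\gamma$, so one should phrase the lower-bound comparison with a genuine subsolution $\underline U=c(t)\,\mbox{dist}^{\gamma/m}$ and verify $\Dc\underline U\le-\A\underline U^m$ in the weak $H^*$ sense, using assumption \eqref{K} to handle the nonlinearity — this is routine but must be done carefully given the weak notion of solution.
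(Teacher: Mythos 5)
Your lower bound does not close for the class of data the theorem covers. You propose to compare $u$ from below with a separate\-/variables (sub)solution $U(t,x)=\tau(t)\,\Phi_1(x)^{1/m}$, which requires $U(0,\cdot)\le u_0$, i.e.\ $u_0(x)\gtrsim \mbox{dist}(x,\partial\Omega)^{\gamma/m}$. That is exactly the ``big initial data'' hypothesis of Theorem \ref{Thm Boundary behaviour2}, whereas Theorem \ref{Thm Sharp Decay Lp} assumes only $0\le u_0\in L^{p_0}\cap H^*$; such a $u_0$ may vanish on an open set. Your fallback — wait until some $t_1>0$ and use that $u(t_1)$ is ``strictly positive in the interior'' — is not available: Theorem \ref{Comparison Principle} gives only $u(t)\ge 0$, and no strict positivity, Harnack inequality, or infinite\-/speed\-/of\-/propagation result for general data is proved here (the paper explicitly lists quantitative lower bounds for general data as an open problem). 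A further obstruction is that the comparison principle in the paper is proved only between two $H^*$-solutions, not between a solution and an approximate subsolution $c(t)\,\mbox{dist}^{\gamma/m}$, so the ``routine'' verification you defer is not actually supported by the available tools. The paper avoids all of this by working with the scalar quantity $Y(t)=\int_\Omega u(t)\Phi_1\dx$: testing \eqref{strong H solution} with $\lambda_1\Phi_1$ and using H\"older/Jensen (for $m\le1$) or the smoothing effect to peel off $\|u(t)\|_\infty^{m-1}$ (for $m>1$) yields a fractional differential inequality $\Dc Y\ge -f(t,Y)$, which is then compared — after a careful kernel\-/regularization step, since $\Dc Y$ is not known to be a function — with the fractional ODE solutions of Appendix \ref{sec: FODE} to get $Y(t)\gtrsim (1+t^{\alpha/m})^{-1}$. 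This needs only $\|u_0\|_{L^1_{\Phi_1}}>0$, and the $L^p$ lower bound then follows from $Y(t)\le\|\Phi_1\|_{p'}\|u(t)\|_p$.

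On the upper bound, your ``main obstacle'' is not there: since $\Omega$ is bounded, $\|u(t)\|_p\le|\Omega|^{1/p}\|u(t)\|_\infty\le C\,t^{-\alpha/m}$ directly from Theorem \ref{Thm smoothing effect}, with no loss of exponent for any $p$. The interpolation $\|u\|_p\le\|u\|_\infty^{1-1/p}\|u\|_1^{1/p}$ that worried you is simply the wrong (weaker) inequality to use, and the ``restart at $t/2$'' scheme you design to repair it is both unnecessary and, as you yourself note, dubious because the Caputo evolution has no semigroup property — there is no cheap way to treat $u(t/2)$ as fresh initial data. This part of your proposal is an overcomplication rather than a gap, but the lower bound as written genuinely fails for general nonnegative $u_0$.
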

The proof of (\ref{Sharp Decay Lp})  follows by the smoothing effects, abound from above the $L^\infty$-norm, and a lower bound for a weighted $L^1_{\Phi_1}$-norm, inspired by \cite{VZ-Blowup}. More detailed comments and a complete proof can be found in Section \ref{section: sharp decay}.

\begin{remark}[\bf The case of unbounded domains]\label{Unbounded domains}\rm
A closer inspection of the proofs shows that the results proven on  Theorems \ref{Comparison Principle}, \ref{Thm Benilan-Crandall}  and \ref{Thm smoothing effect} can be generalized to unbounded domains including $\mathbb{R}^N_+$ and  $\mathbb{R}^N$ with minimal effort.
 The existence and uniqueness theory of fractional gradient flows, the comparison principle and the time monotonicity carry over essentially unchanged. The smoothing effects follows by decomposing the unbounded domain into a ball of a specific radius and its complement. In this way, it is possible to obtain the same $L^p-L^\infty$ estimate in unbounded domains, following the method in \cite{BE 2023}. Alternatively, following \cite{DPQRV2}, one can use a sequence of solutions to \eqref{CDP} on expanding balls, to which all theorems here apply and provide uniform estimates respect to the size of the ball, and pass to the limit.
 \end{remark}

\subsection{Physical motivation of the model}\label{Physics}

The classical Porous Medium Equation with $\alpha=1$ and $m>1$ is named after its usefulness to describe the behaviour of an ideal gas in a homogeneous porous medium according to the works of Leibenzon \cite{Leibenzon} and Muskat \cite{Muskat}. Similarly, this model also provides a good description of a fluid, typically water, penetrating in a porous medium, such as soil or brick. However, this equation has been used widely to model different types of nonlinear diffusion processes such as nonlinear heat transfer and population dynamics, see \cite{VazBook} and the references therein. We recall from \cite{Lukasz-deri,VazBook} the derivation of the physical model for the groundwater flow in unsaturated porous media.

At a macroscopic level, let us consider $u(t,x)$ to be the fluid concentration at point $x$ at time $t$, and let the vector $\textbf{q}(t,x)$ represent the flux, which denotes the amount and direction of fluid across the unit surface in a unit time. According to the mass conservation property, the function $u$ satisfies the continuity equation
\begin{equation}\label{continuity eq}
  \partial_t u+\nabla\cdot \textbf{q}=0\,.
\end{equation}
In 1856, Darcy \cite{Darcy} showed empirically that the flux $q$ is proportional to the gradient of the pressure
\begin{equation}
  \textbf{q}=-\frac{ \rho k}{\mu}\nabla p\,,
\end{equation}
where $\rho>0$ is the density of the fluid, $\mu>0$ is the viscosity, $k>0$ is the permeability and $p$ is the pressure function. Let us remark that, Darcy's Law can also be derived rigorously by averaging Navier-Stokes equations for fluids in a porous medium \cite{Whitaker}. In most cases and with most fluids, the pressure $p$ can be expressed as a monotone function of the fluid concentration $u$, such that $p=p(u)$ and hence
\begin{equation}\label{Darcy Law}
  \textbf{q}=-\frac{\rho k}{ \mu}\frac{{\rm d} p}{{\rm d} u}\nabla u=:-D(u)\nabla u\,,
\end{equation}
where $D(u)$ is known as the \emph{diffusivity}. Implementing this definition in the continuity equation lead us to the following equation
\begin{equation}\label{Parabolic Darcy Law}
  \partial_t u=\nabla\cdot\left(D(u)\nabla u\right)\,,
\end{equation}
known in the literature as Richards equation \cite{Richards} and Boussinesq's equation \cite{Boussinesq}.
The selection of the pressure $p(u)$ is frequently chosen to fit the experimental results. In particular, one of the most common  is the Brooks-Correy function \cite{Brooks-Corey}
\begin{equation}\label{pressure}
  p(u)\sim u^{m}\,,
\end{equation}
 where the value $m$ varies depending on the soil type under examination and the distribution of the pores within it.

Let us mention that the Fast Diffusion Equation, i.e. $m\in(0,1)$, arises in the gas-kinetics model of Barenblatt-Zel'dovich-Kompaneets \cite{B1979}, and also in plasma physics where high temperatures alter standard physical laws (Fourier's Law).
In this context, the Okuda-Dawson scaling \cite{Okuda} leads to a singular diffusivity $D(u)\sim u^{-1/2}$ in \eqref{Parabolic Darcy Law}. See the pioneering work of Berryman and Holland \cite{BH1978}.

\vspace{2mm}

Recent studies have observed that these models are not accurate enough when addressing certain porous media, such as building materials \cite{El Abd-constr} and zeolite \cite{deAz-zeolite}. In this cases, fluid particles can be trapped in some regions for a significant amount of time delaying the transport of the moisture. Following \cite{Lukasz-deri}, let us show a deterministic derivation of the Caputo Porous Medium Equation for this type of phenomena.

Given a fluid in a porous medium which flow behaves according to \eqref{continuity eq}, let us assume that certain physical phenomena  cause fluid particles to become trapped in specific regions for a duration denoted by $\tau$. As a result, only a part of the instantaneous flux at time $t$ should contribute to the change in the concentration. But we should also consider the released particles that were trapped in the previous time $t-\tau$. Consequently, we can rewrite \eqref{continuity eq} as delayed equation with corresponding weights, denoted by $w_0$ and $w_1$,
\begin{equation*}
  \partial_t u(t)=-\left(w_0\nabla\cdot \textbf{q}(t)+w_1\nabla\cdot \textbf{q}(t-\tau)\right)\,.
\end{equation*}
Following this idea, we can build the model for the case when particles are trapped in different periods of time $0\le\tau_1\le\dots\le \tau_n< t$ with their associated weights $w_1,\dots,w_n$
\begin{equation*}
  \partial_t u(t)=-\sum_{i=1}^{n}w_i\nabla\cdot \textbf{q}(t-\tau_i)\,.
\end{equation*}
By incorporating the weight density $w=w(\tau)$, we can generalize the trapping phenomena to a continuous distribution of periods. Therefore, since $w_i=w(\tau_i)(\tau_i-\tau_{i-1})$ we obtain
\begin{equation*}
  \partial_t u(t)=-\sum_{i=1}^{n}w(\tau_i)\nabla\cdot\textbf{q}(t-\tau_i)(\tau_i-\tau_{i-1})\quad\longrightarrow\quad\partial_t u(t)=-\int_{0}^{t}w(\tau)\nabla\cdot\textbf{q}(t-\tau)\dtau\,,
\end{equation*}
which lead us to the non-local integro-differential equation by a change of variable:
\begin{equation}\label{continuty eq memory}
  \partial_t u(t)=-\int_{0}^{t}w(t-\tau)\nabla\cdot\textbf{q}(\tau)\dtau\,.
\end{equation}
We consider now the family of weight densities $w_\alpha$ with $\alpha\in(0,1)$, satisfying $w_\alpha\rightarrow\delta_0$ in the sense of distributions when $\alpha\rightarrow 1$. Note that when $\alpha\rightarrow 1$ the particles are being trapped for shorter time and the memory-effect is decreasing until equation \eqref{continuity eq} is recovered. As we consider $w_\alpha$ to be a distribution, let us  modified the kernel in \eqref{continuty eq memory} to deal with functions. Using the approximation of Dirac's delta given by  $$w_1=\delta_0=\lim\limits_{h\rightarrow 0}\frac{\chi_{[0,h]}}{h}$$ with the limit in the weak sense, then
\begin{equation*}
  \nabla \textbf{q}(t)=\int_{0}^{t}w_1(t-\tau)\nabla\cdot\textbf{q}(\tau)\dtau=\lim\limits_{h\rightarrow 0}\int_{t-h}^{t}\nabla\cdot\textbf{q}(\tau)\dtau=\frac{\rm d}{\dt}\int_{0}^{t}\nabla\cdot\textbf{q}(\tau)\dtau\,.
\end{equation*}
Hence, we rewrite equation \eqref{continuty eq memory} in terms of a function $k_\alpha$ with $k_1\equiv 1$ as follows
\begin{equation}\label{continuity eq kernel}
  \partial_t u(t)=-\frac{\rm d}{\dt}\int_{0}^{t}k_\alpha(t-\tau)\nabla\cdot\textbf{q}(\tau)\dtau\,.
\end{equation}
It this framework, there are plenty of kernels $k_\alpha$ to be consider, see \cite{GA,Stinga,WWZ}. But, the simplest one is to consider a power function with a normalizing constant $k_\alpha(\tau)=\Gamma(\alpha)^{-1}\tau^{\alpha-1}$,
\begin{equation}\label{RL continuity eq kernel}
  \partial_t u(t)=-\frac{1}{\Gamma(\alpha)}\frac{\rm d}{\dt}\int_{0}^{t}(t-\tau)^{\alpha-1}\nabla\cdot\textbf{q}(\tau)\dtau
  =:-\prescript{}{RL}{D_t^{1-\alpha}} \big(\nabla\cdot\textbf{q}(t)\big)\,.
\end{equation}
Note that we have obtained the Riemann-Liouville fractional derivative in the right hand side. Given suitable regularity conditions, see \cite[Theorem 2.22]{Diethelm},  we can apply Riemann-Liouville integral in both sides of \eqref{RL continuity eq kernel} to obtain the following equation involving the Caputo derivative:
\begin{equation}
 D_t^\alpha u(t)=-\nabla\cdot\textbf{q}(t)\,.
\end{equation}
This equation corresponds to \eqref{CDP} with flux from Darcy's Law \eqref{Darcy Law} and pressure \eqref{pressure}, specifically when $s=1$.

\subsection{Assumptions and functional setting of the operator \texorpdfstring{$\A$}{L}}\label{sec: assumptions}

Let us present the main assumptions on the operator $\A$ and its inverse operator $\AI$. These assumptions  are the standard ones in the literature, see \cite{BFV2018CalcVar,BFV2018,BII,BSV2013,BV-PPR1,BV2016}.

\noindent{\bf Assumptions on $\AI$.} Our approach relies on the dual formulation of \eqref{CDP}, so that we assume the existence of an inverse operator $\AI: L^2(\Omega)\rightarrow L^2(\Omega)$ for $\A$  represented through a kernel $\G$:
\[
\AI u(x):=\int_\Omega \G(x,y)u(y)\dy\,.
\]
The main assumption on the Green function $\G$ is that there exists $c_{1,\Omega}>0$ such that
\begin{equation}\label{K1}\tag{G1}
  0\le\G(x,y)\le\frac{c_{1,\Omega}}{|x-y|^{N-2s}}\,,
\end{equation}
for some $s\in(0,1]$. Assumption \eqref{K1} guarantees the compactness of the operator $\AI$ in $L^2(\Omega)$, see \cite[Proposition 5.1]{BFV2018CalcVar}. As a consequence, the operator $\AI$ has a discrete spectrum and a $L^2(\Omega)$ orthonormal basis of eigenfunctions that we denote by $\left(\lambda_k^{-1},\Phi_k\right)_{k\ge 1}$. Accordingly, we can express
$$\AI u(x)=\sum_{k=1}^{\infty}\lambda_k^{-1}\hat{u}_k\,\Phi_k(x)\,\qquad\mbox{with}\quad \hat{u}_k=\int_\Omega u(y)\Phi_k(y)\dy\,.$$
Furthermore, assumption \eqref{K1} also implies that $\A$ has the discrete spectrum $\left(\lambda_k,\Phi_k\right)_{k\ge 1}$ and then
\begin{equation}\label{spectral L}
  \A u(x)=\sum_{k=1}^{\infty}\lambda_k\hat{u}_k\,\Phi_k(x)\,.
\end{equation}
Hence, assumption \eqref{K1} is enough to construct the Hilbert space $H^*(\Omega)$ -- the functional space where fractional gradient flow theory will be applied -- as the topological dual space of
$$H(\Omega)=\left\lbrace v\in L^2(\Omega)\quad:  \quad \|v\|_{H(\Omega)}^2=\sum_{k=1}^{\infty}\lambda_k\,\left|\hat{v}_k\right|^2<+\infty\right\rbrace\,.$$
 Note that the scalar product and the norm of $H^*(\Omega)$ now read as
$$\langle u,v\rangle_{H^*(\Omega)}=\sum_{k=1}^{\infty}\lambda_k^{-1}\hat{u}_k\,\hat{v}_k \qquad\mbox{and}\qquad  \|u\|^2_{H^*(\Omega)}=\sum_{k=1}^{\infty}\lambda_k^{-1}|\hat{u}_k|^2\,.$$
We refer to  \cite{BSV2013} for a proper identification of the functional spaces $H(\Omega)$ and $H^*(\Omega)$ depending on the specific choice of the operator $\A$. See also \cite{BFV2018CalcVar,BFV2018,BV2016}.

When dealing with boundary issues it is convenient to assume  that the kernel $\G$ satisfies the following condition for some $s,\gamma\in(0,1]$: there exist $c_{0,\Omega}, c_{1,\Omega}>0$ such that for a.e. $x,y\in\Omega$
		\begin{equation}\label{K2}\tag{G2}
			c_{0,\Omega}\p(x)\p(y)\leq\G(x, y)\leq\frac{c_{1, \Omega}}{\lvert x-y\rvert^{N-2s}}
            \left(\frac{\p(x)}{\lvert x-y\rvert^{\gamma}}\wedge 1\right)\left(\frac{\p(y)}{\lvert x-y\rvert^{\gamma}}\wedge 1\right)\,,
		\end{equation}
where $\delta_\gamma(x)=\mbox{dist}(x,\partial\Omega)^\gamma$. It was proven in \cite{BFV2018CalcVar} that \eqref{K2} implies that the first eigenfunction of $\A$, namely, $\A\Phi_1=\lambda_1\Phi_1$ with $\Phi_1\ge 0$ and some constant $\lambda_1>0$, satisfies $\Phi_1(x)\asymp \mbox{dist}(x,\partial\Omega)^\gamma$. Thus, we can rewrite assumption \eqref{K2} as follows
\begin{equation}\label{K3}\tag{G3}
			\c_{0,\Omega}\Phi_1(x)\Phi_1(y)\leq\G(x, y)\leq\frac{\c_{1, \Omega}}{\lvert x-y\rvert^{N-2s}}
            \left(\frac{\Phi_1(x)}{\lvert x-y\rvert^{\gamma}}\wedge 1\right)\left(\frac{\Phi_1(y)}{\lvert x-y\rvert^{\gamma}}\wedge 1\right)\,.
		\end{equation}
Assumptions \eqref{K2} and \eqref{K3} will be used only in Theorem \ref{Thm Boundary behaviour} and Theorem \ref{Thm Boundary behaviour2}. In the rest of the results we just assume \eqref{K1} on $\G$.

\noindent	\textbf{Assumptions on $\A$.} Let $\A:\mbox{\rm dom}(\A)\subset L^2(\Omega)\rightarrow L^2(\Omega)$ be a positive, possibly unbounded and self-adjoint operator. From assumption \eqref{K1}, we know that the operator $\A$ can be expressed in its spectral form as in \eqref{spectral L}, and hence the integration by parts formula follows:
\begin{equation*}
  \int_\Omega u\A v\dx=\int_{\Omega}\AM u\AM v\dx=\int_\Omega v\A u\dx\qquad\forall u,v\in H(\Omega)\,.
\end{equation*}
The main assumption that we need on the operator $\A$ is the following weak form of Kato's inequality: for any convex function $h\in C^1(\mathbb{R})$ with $h(0)=0$ and any function $u\in L^1_{\rm loc}(\Omega)$ with $h(u)\in H^*(\Omega)$, it holds that (recall $\Phi_1\ge 0$ is the first eigenfunction of $\A$)
\begin{equation}\tag{K}\label{K}
  \int_\Omega\A\left[h(u)\right]\Phi_1\dx\le \int_\Omega h'(u) \A u\,\Phi_1\dx\,.
\end{equation}
\begin{remark}
We are not aware of any positive, linear operator satisfying \eqref{K1} which does not satisfy Kato's inequality. Indeed, for the classical Dirichlet Laplacian, $\A=(-\Delta_\Omega)$, Kato's inequality holds pointwise since $h''(r)\ge 0$ when $h$ is a convex function. For nonlocal operators of the form
\begin{equation}\label{nonlocal operators}
  \A u(x)=\int_{\mathbb{R}^N}\left(u(x)-u(y)\right)\mathbb{K}(x,y)\dy+B(x)u(x)\,,
\end{equation}
with $\mathbb{K}(x,y)\ge0$ and $B(x)\ge 0$, Kato's inequality also holds in a stronger form than \eqref{K} as it can be seen in the Appendix of \cite{BII}.

\end{remark}
\vspace{2mm}

\noindent\textbf{Notations. }
We will write the $L^p$ norm in $\Omega$  as $\|f\|_p:=(\int_\Omega|f|^p\dx)^{1/p}$, but for weighted norms with the first eigenvalue $\Phi_1$ of the operator $\A$ we use $\|f\|_{L^p_{\Phi_1}(\Omega)}:=\left(\int_\Omega|f|^p\Phi_1\dx\right)^{1/p}$ in the space $L^p_{\Phi_1}(\Omega):=\lbrace f\in L^1_{\rm loc}(\Omega) : \|f\|_{L^p_{\Phi_1}(\Omega)}<+\infty\rbrace$. The Heaviside function will be denoted by $\mbox{sign}_+(f):=\max\lbrace0, f\rbrace$, and positive and negative part as $(f)_+:=\max\lbrace 0,f\rbrace$ and $(f)_-=\max\lbrace 0,-f\rbrace$, respectively. We will use $f\asymp g$ when there exists $c_1,c_2>0$ such that $c_1 g(x)\le f(x)\le c_2 g(x)$ for a.e. $x$.


\subsection{Examples of \texorpdfstring{$\A$}{operators}}\label{sec: examples}

The assumptions that we state above describe a wide class of operators of local and nonlocal type. Let us enumerate some of the most important ones here.  For a more extended list of examples we refer to \cite{BII} and \cite[Section 3]{BV2016}.\vspace{2mm}

\noindent{\bf The Dirichlet Laplacian.} We can consider the classical Dirichlet Laplacian $\A=(-\Delta_\Omega)$ with $s=1$, where the Green function $\G$ is bounded by the classical Newtonian potential. For this operator \eqref{K2} is satisfied with $\gamma=1$.\vspace{2mm}

The main examples of nonlocal operators on bounded domains are the three fractional Laplacians that we state below. Note that, on the whole space $\mathbb{R}^N$, the three definitions are equivalent, but on bounded domains they represent entirely different nonlocal operators, as highlighted by the behaviour of the first eigenfunction for each of them. Indeed, from a probabilistic standpoint, these operators are the infinitesimal generator of distinct Lévy processes.\vspace{2mm}

\noindent\textbf{The Restricted Fractional Laplacian (RFL). }We define this fractional Laplacian operator acting on a bounded domain by using the integral representation on the whole space in terms of a hypersingular kernel, namely,
\begin{equation}\label{RFL}
  (-\Delta_{|\Omega})^sf(x)=c_{N,s}\;\mbox{P.V. }\int_{\RR^N}\frac{f(x)-f(y)}{|x-y|^{N+2s}}\dy\,,
\end{equation}
where $s\in(0,1)$ and we restrict the operator to functions that are zero outside $\Omega$. For this operator \eqref{K2} is satisfied with $\gamma=s$.\vspace{2mm}

\noindent\textbf{Spectral Fractional Laplacian (SFL). }Consider the so-called \emph{spectral definition} of the fractional power $s\in(0,1)$ of the classical Dirichlet Laplacian $\Delta_\Omega$ on $\Omega$ defined by a formula in terms of semigroups, namely
\begin{equation}\label{SFL}
  (-\Delta_{\Omega})^sf(x)=\frac{1}{\Gamma(-s)}\int_{0}^{\infty}(e^{t\Delta_{\Omega}}f(x)-f(x))\frac{\dt}{t^{1+s}} =\sum_{j=1}^{\infty}\lambda_j^s\hat{f}_j\phi_j,
\end{equation}
where $(\lambda_k,\phi_k)_{k\ge 1}$ are the eigenvalues and eigenfunctions of the classical Dirichlet Laplacian on $\Omega$, $\hat{f}=\int_{\Omega}f(x)\phi_k(x)\dx$, and $\|\phi_k\|_2=1$. Since the eigenvalues are the same as in the Dirichlet Laplacian, \eqref{K2} is fulfilled with $\gamma=1$. \vspace{2mm}

\noindent\textbf{Censored Fractional Laplacian (CFL). }
This third kind of Fractional Laplacian was introduced in \cite{bogdan-censor}, in connection with censored stable processes. The operator takes the form:
\begin{equation}\label{CFL}
  \A f(x)=\mathrm{P.V.}\int_{\Omega}\frac{f(x)-f(y)}{|x-y|^{N+2s}}\dy\,,\qquad\mbox{with }\frac{1}{2}<s<1\,.
\end{equation}
The restriction on the parameter $s$ is due to the lack of trace inequality for $s\in (0,1/2]$. This operator satisfies \eqref{K2} with $\gamma=2s-1$.\vspace{2mm}

\noindent\textbf{Other examples:} As explained in \cite[Section 3]{BV2016}, our theory may also be
applied to: i) the operators above with uniformly elliptic, $C^1$ and symmetric coefficients; ii) sums of two fractional operators; iii) the sum of the classical Laplacian and a nonlocal operator kernel; iv) Schrödinger equations for non-symmetric diffusions; v) gradient perturbation of restricted
fractional Laplacians.

\vspace{2mm}

\noindent\textbf{Functional spaces associated to $\A$} We recall that the three fractional Laplacians on domains, \eqref{RFL}, \eqref{SFL} and \eqref{CFL}, can be expressed as in \eqref{nonlocal operators}, see Section \ref{sec: examples} and \cite{BV2016} for more details and examples. Moreover, for \eqref{RFL}, \eqref{SFL} and \eqref{CFL}, it is possible to identify $H(\Omega)$ and $H^*(\Omega)$ with the standard fractional Sobolev spaces as follows: Let $s\in(0,1]$
  \begin{equation*}
  H(\Omega)=
    \begin{cases}
      H_0^s(\Omega), & \mbox{if } s>1/2 \\
      H^{1/2}_{00}(\Omega), & \mbox{if } s=1/2 \\
      H^s(\Omega), & \mbox{if } s<1/2,
    \end{cases}
    \qquad\mbox{and}\qquad
    H^*(\Omega)=
    \begin{cases}
     H^{-s}(\Omega), & \mbox{if } s>1/2 \\
      \left(H^{1/2}_{00}(\Omega)\right)^*, & \mbox{if } s=1/2 \\
      H^{-s}(\Omega), & \mbox{if } s<1/2.
    \end{cases}
  \end{equation*}
  Recall that $H_0^s(\Omega)=H^s(\Omega)$ when $s\le1/2$, and hence the dual coincide. If $s=1/2$, then $H^s_{00}(\Omega)\subsetneq H^s_0(\Omega)$, which implies $H^{-s}(\Omega)\subsetneq\left(H^s_{00}(\Omega)\right)^*$. See \cite[Sec. 7.7]{BSV2013} for more details. This identification holds for any operator $\A$ with discrete spectrum such that $0<\lambda_{k+1}/\lambda_k\le\Lambda_0$ for some $\Lambda_0>0$.

\vspace{2mm}

\section{Discretized problem}\label{Discrete problem}
In this section we will provide some properties of the discretization of \eqref{CDP} and then deduce the comparison principle. Several discretizations of the Caputo derivative have been proposed through the years, see for instance \cite{DGXZZ,LiLiu2019,LiSalgado}. The discretization that works best for our convergence analysis is the one proposed in \cite{LiLiu2019}, which is based on a deconvolution scheme.

\begin{definition}\cite[Section 3.]{LiLiu2019}
  Let $\tau=T/N>0$ be the time-step and $t_k=k\tau$ with $0\le k\le N$ points of the interval $(0,T)$.
  Let us consider the Euler implicit scheme for \eqref{CDP}:
\begin{equation}\label{DCDP}
  \begin{cases}
    (D^\alpha_\tau U)_{k}=-\A U_{k}^m&\mbox{in}\quad\Omega,\quad \forall k\ge 0\,,\\
    U_0=u_0&\mbox{in}\quad\Omega\,,
  \end{cases}
\end{equation}
where
  \begin{equation}\label{Discrete Caputo derivative}
  \left(D^\alpha_\tau U\right)_{k}:=\frac{1}{\tau^\alpha}\left(c_k^k(U_{k}-U_0)+\sum_{i=1}^{k-1}c_i(U_{k}-U_{k-i})\right)\,,
\end{equation}
  with $c_k^k:=\sum_{i=k}^{\infty}c_i>0$ and $c_i>0$ are defined recursively by
  $$
  c_i:=\sum_{j=0}^{i-1}\left((i+1-j)^\alpha-(i-j)^\alpha\right)c_j\qquad \mbox{for any }i\ge1\,,
  $$
  with $c_0:=\Gamma(1+\alpha)=c_k^k+\sum_{i=1}^{k-1} c_i$. Equivalently, we also will use
   \begin{equation}\label{Discrete caputo derivative 2}
    (D^\alpha_\tau X)_k=\frac{1}{\tau^\alpha}\left(c_0(X_{k+1}-X_0)-\sum_{i=1}^{k-1}c_i(X_{k-i}-X_0)\right)\,.
  \end{equation}
\end{definition}
One of the drawbacks of the Caputo derivative is the lack of a chain rule. However it is noted in \cite{GA,WWZ}, that if the kernel $k$ belongs to $H^1([0,T])$, then the ``chain rule inequality'' below holds for convex functions $h\in C^1(\mathbb{R}^N)$, see Lemma \ref{fundamental identity}. Namely, we have that
$$\partial_t[k*(h(f)-h(f_0))](t)\le h'(f(t))\partial_t[k*(f-f_0)](t)\,.$$
 Another remarkable difference with the classical time derivative is the fact that $\Dc f(t)\le 0$ only implies $f(t)\le f(0)$, but not the monotonicity $f(t)\le f(t_0)$ for all $t\ge t_0$.

Also  the discrete Caputo derivative (\ref{Discrete Caputo derivative}) satisfies the  monotonicity formula and the chain rule inequality:
\begin{proposition}\cite[Theorem 3.1.]{LiLiu2019}\label{convex chain rule}
  \begin{enumerate}[label=\roman*)]
  \item (Chain rule inequality). Let $h:\mathbb{R}\rightarrow\mathbb{R}$ be a convex function in $C^1(\mathbb{R})$. Then, for every sequence of functions $\lbrace U_k\rbrace_{k\ge 0}\subset L^1_{loc}(\Omega)$ it holds that
  \begin{equation*}
    (D^\alpha_\tau \,h(U))_k\le h'(U_k)\,(D^\alpha_\tau U)_k\qquad\mbox{a.e. in }\Omega\,.
  \end{equation*}
  \item (Nonpositive discrete Caputo derivative). Assume that a sequence $\lbrace X_k\rbrace_{k\ge 0}\subset \mathbb{R}$  satisfies $(D^\alpha_\tau X)_k\le 0$
  for every $k\ge 1$, then
  \begin{equation*}
  X_k\le X_0\qquad\forall k\ge 1\,.
  \end{equation*}
  \end{enumerate}
\end{proposition}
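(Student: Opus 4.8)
**Proof plan for Proposition \ref{convex chain rule} (the chain rule inequality and the nonpositivity statement for the discrete Caputo derivative).**

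The statement I need to prove has two parts: (i) the discrete chain rule inequality $(D^\alpha_\tau h(U))_k \le h'(U_k)(D^\alpha_\tau U)_k$ for convex $h \in C^1$, and (ii) that $(D^\alpha_\tau X)_k \le 0$ for all $k \ge 1$ forces $X_k \le X_0$ for all $k$. Both hinge on the sign structure of the coefficients $c_i$ and $c_k^k$ appearing in \eqref{Discrete Caputo derivative}. So the first thing I would do is establish the key algebraic facts: all $c_i > 0$ (stated already in the definition), $c_k^k = \sum_{i \ge k} c_i > 0$, and crucially that $c_0 = \Gamma(1+\alpha) = c_k^k + \sum_{i=1}^{k-1} c_i$, so that $c_k^k + \sum_{i=1}^{k-1} c_i = c_0$ for every $k$. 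In other words, $(D^\alpha_\tau U)_k$ is (up to the factor $\tau^{-\alpha}$) a combination $c_0 U_k - c_k^k U_0 - \sum_{i=1}^{k-1} c_i U_{k-i}$ where the positive weights $c_k^k, c_1, \dots, c_{k-1}$ on the "past values" $U_0, U_{k-1}, \dots, U_1$ sum exactly to $c_0$, the weight on $U_k$. This convex-combination structure is the engine for both parts.

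For part (i), I would write
\[
\tau^\alpha (D^\alpha_\tau h(U))_k = c_0 h(U_k) - c_k^k h(U_0) - \sum_{i=1}^{k-1} c_i\, h(U_{k-i}),
\]
and exploit the convexity of $h$ in the form of the subgradient inequality $h(a) \ge h(b) + h'(b)(a-b)$, applied with $b = U_k$ and $a$ running over the past values $U_0, U_{k-1}, \dots, U_1$. That gives $-h(U_0) \le -h(U_k) - h'(U_k)(U_0 - U_k)$ and likewise for each $U_{k-i}$. Multiplying each such inequality by the corresponding nonnegative coefficient ($c_k^k$ or $c_i$) and summing, the $h(U_k)$ terms reassemble to $-c_0 h(U_k)$ because $c_k^k + \sum_i c_i = c_0$, and the linear terms reassemble to $-h'(U_k)\big(c_k^k U_0 + \sum_i c_i U_{k-i} - c_0 U_k\big) = h'(U_k)\cdot \tau^\alpha (D^\alpha_\tau U)_k$. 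Collecting everything yields $\tau^\alpha(D^\alpha_\tau h(U))_k \le c_0 h(U_k) - c_0 h(U_k) + h'(U_k)\tau^\alpha(D^\alpha_\tau U)_k$, i.e. exactly the claimed inequality after dividing by $\tau^\alpha > 0$. This argument is entirely pointwise in $x$, so it holds a.e. in $\Omega$; no regularity beyond $U_k \in L^1_{\mathrm{loc}}$ is needed.

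For part (ii), I would argue by induction on $k$. The base case $k=1$ reads $c_0(X_1 - X_0) = \tau^\alpha (D^\alpha_\tau X)_1 \le 0$, and since $c_0 = \Gamma(1+\alpha) > 0$ this gives $X_1 \le X_0$. For the inductive step, suppose $X_j \le X_0$ for all $1 \le j \le k-1$. From $(D^\alpha_\tau X)_k \le 0$ and the expression above, $c_0 X_k \le c_k^k X_0 + \sum_{i=1}^{k-1} c_i X_{k-i}$; using the induction hypothesis $X_{k-i} \le X_0$ and $c_i > 0$, the right side is $\le \big(c_k^k + \sum_{i=1}^{k-1} c_i\big) X_0 = c_0 X_0$, hence $X_k \le X_0$. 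This closes the induction. I expect the main (minor) obstacle to be nailing down the coefficient identity $c_k^k + \sum_{i=1}^{k-1} c_i = c_0$ for all $k$ — i.e. that the "tail mass" $c_k^k$ plus the first $k-1$ coefficients always reconstitute $\Gamma(1+\alpha)$ — which follows from the recursive definition of the $c_i$ (telescoping the defining recursion) but should be stated carefully; once that identity and the positivity $c_i>0$ are in hand, both parts are short. Since this proposition is quoted from \cite[Theorem 3.1]{LiLiu2019}, I would cite that reference for the coefficient facts and present the convexity arguments above as the proof.
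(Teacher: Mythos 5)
Your proposal is correct and follows essentially the same route as the paper: part (i) is the subgradient inequality $h(y)\ge h(x)+h'(x)(y-x)$ applied to each past value and summed against the nonnegative weights $c_k^k,c_i$ (the paper writes this directly on the difference form $c_k^k(U_k-U_0)+\sum c_i(U_k-U_{k-i})$ rather than expanding via $c_0=c_k^k+\sum_{i=1}^{k-1}c_i$, but the computation is identical), and part (ii) is the same induction using positivity of the $c_i$ and the coefficient identity, which the paper simply takes from the definition/\cite{LiLiu2019} as you propose to do.
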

\begin{proof}
\begin{enumerate}[label=\roman*)]
  \item  Since $h\in C^1(\mathbb{R})$ is convex , we have that $h'(x)(x-y)\ge(h(x)-h(y))$ for every $x,y\in\mathbb{R}$. Then,
  \begin{align*}
    h'(U_k)(D^\alpha_\tau u)_k &  =\frac{1}{\tau^\alpha}\left(h'(U_k)(U_k-U_0)c_k^k+\sum_{i=1}^{k-1}h'(U_k)(U_{k}-U_{k-i})c_i\right)\\
    &\ge \frac{1}{\tau^\alpha}\left(c_k^k\left(h(U_k)-h(U_0)\right)+\sum_{i=1}^{k-1}c_i(h(U_{k})-h(U_{k-i}))\right).
  \end{align*}
  \item We prove the result by induction. For the case $k=1$ is trivial since $\tau,c_0>0$. Now, assume that $X_k\le X_0$ for every $k=1,\dots n$, and let us prove it for $k=n+1$. From \eqref{Discrete caputo derivative 2} and $(D^\alpha_c X)_{k+1}\le 0$ we know that
  \begin{equation*}
     (D^\alpha_\tau X)_{k+1}=\frac{1}{\tau^\alpha}\left((c_0(X_{k+1}-X_0)-\sum_{i=1}^{k}c_i(X_{k+1-i}-X_0)\right)\le 0\,,
  \end{equation*}
  which by the induction hypothesis give us
  $$c_0(X_{k+1}-X_0)\le\sum_{i=1}^{k}c_i(X_{k+1-i}-X_0)\le 0\,,$$
   since $c_i>0$ for every $i\ge 0$.
\end{enumerate}

\end{proof}

\subsection{Existence and uniqueness for the discretized problem}

The method used to prove existence of solutions for \eqref{DCDP} is a time-fractional version of the classical minimizing movements scheme \cite{LiLiu2019,LiSalgado}. As in the case of classical time derivative \cite{BII}, the energy used to construct the gradient flow solutions is
\[
\Es(u):=\begin{cases}
           \frac{1}{1+m}\int_\Omega |u(x)|^{1+m}\dx, & \mbox{if } u\in L^{1+m}(\Omega),  \\
           +\infty, & \mbox{otherwise}.
         \end{cases}
\]
Notice that this energy functional is convex and lower semicontinuous in the Hilbert space $H^*(\Omega)$, which is the (topological) dual space of $H(\Omega)$, the domain of the quadratic form associated to $\A$,
$$H(\Omega)=\left\lbrace u\in L^2(\Omega)\; :\; \|u\|^2_{H(\Omega)}=\int_\Omega u\A u\dx<+\infty\right\rbrace\,.$$
Let us recall that the space $H^*(\Omega)$ is a Hilbert space endowed with the scalar product and the norm
$$\langle u, v\rangle_{H^*(\Omega)}=\int_\Omega u\AI v\dx \qquad\mbox{and}\qquad \|u\|^2_{H^*(\Omega)}=\int_\Omega u\AI u\dx\,.$$
The classical gradient flow theory in Hilbert spaces -- in particular, in $H^*(\Omega)$ -- provides solutions of
\begin{equation}\label{General GF}
  -\partial_t u(t)\in\partial\Es(u(t))\qquad\mbox{for a.e. }t>0\,,
\end{equation}
where the subdifferential of $\Es$ is defined by
$$\xi\in\partial\Es(u)\quad\Leftrightarrow\quad\Es(v)-\Es(u)\ge\langle\xi,v-u\rangle_{H^*(\Omega)}\qquad\forall v\in H^*(\Omega)\,.$$
In our case, note that \eqref{General GF} is related to the classical PME on domains; note in fact that for any $u$ such that
\begin{equation*}
  u\in D(\partial\Es)=\lbrace v\in L^{1+m}(\Omega)\cap H^*(\Omega)\;:\; |v|^{m-1}v\in H(\Omega)\rbrace\,,
\end{equation*}
we have, in the $H^*(\Omega)$ sense,
$$\xi\in\partial\Es(u)\qquad\Leftrightarrow\qquad\AI\xi=|u|^{m-1}u\,.$$

In the fractional gradient flow theory, the classical time derivative $\partial_t u$ in \eqref{General GF} is substituted by a fractional time derivative, such as the Caputo derivative, $\Dc u$. The well-posedness of fractional gradient flow solutions in Hilbert spaces has been proven in \cite{GA,LiLiu2019,LiSalgado}. The first step in this approach is to provide the existence and uniqueness of the time discretized equation \eqref{DCDP}, that we stated below.

\begin{theorem}\label{Existence of discrete GF}\cite{LiLiu2019,LiSalgado}
  Let $m>0$ and assume \eqref{K1}. For any $U_0\in L^{1+m}(\Omega)\cap H^*(\Omega)$ fixed there exists a unique sequence of functions $\lbrace U_k\rbrace_{k\ge 0}\subset D(\partial\Es)$ satisfying
  \begin{equation}\label{Discrete Gradient Flow}
    -(D^\alpha_\tau U)_{k}\in \partial\Es(U_{k})\qquad\forall k \ge 0\,,
  \end{equation}
  which is \eqref{DCDP} in the $H^*$-sense, namely, for every $k\ge 0$
  \begin{equation}\label{Discrete H* formulation}
    \int_\Omega (D^\alpha_\tau U)_{k}\AI \varphi\dx=-\int_\Omega U_{k}^m\varphi\dx\qquad\forall \varphi\in H^*(\Omega)\,.
  \end{equation}
   Moreover,
  \begin{equation}\label{discrete Decay of the Energy}
     \Es(U_k)\le\Es(U_0)\qquad\forall k\ge 1\,.
   \end{equation}
\end{theorem}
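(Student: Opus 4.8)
The plan is to reduce the whole statement to a single elliptic variational problem per time step, solved by minimizing an auxiliary functional, and then iterate. Writing out \eqref{Discrete Caputo derivative} for fixed $k\ge 1$, the equation $-(D^\alpha_\tau U)_k\in\partial\Es(U_k)$ reads
\begin{equation*}
  \frac{c_k^k}{\tau^\alpha}\,U_k + \partial\Es(U_k)\ni \frac{c_k^k}{\tau^\alpha}U_0 + \frac{1}{\tau^\alpha}\sum_{i=1}^{k-1}c_i\,(U_{k-i}-U_k) + \text{(rearranged terms)},
\end{equation*}
but the cleaner way is to observe that, grouping the $U_k$-terms, the scheme has the form $\mu\,U_k + \partial\Es(U_k)\ni F_k$ in $H^*(\Omega)$, where $\mu = c_k^k/\tau^\alpha>0$ uses $c_k^k>0$ (this is where that strict positivity matters) and $F_k\in H^*(\Omega)$ is an explicit affine combination of the previously constructed $U_0,\dots,U_{k-1}$. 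First I would dispatch the case $k=0$: then $\mu U_0 + \partial\Es(U_0)\ni \mu U_0$ trivially if $U_0\in D(\partial\Es)$ — actually one should just observe $U_0=u_0$ is the prescribed datum and the scheme \eqref{DCDP} only imposes the relation for $k\ge 1$, so the $k=0$ line is the initialization; I would state the hypothesis $u_0\in L^{1+m}(\Omega)\cap H^*(\Omega)$ and note we additionally need $u_0\in D(\partial\Es)$ only if the theorem asserts $U_0\in D(\partial\Es)$, in which case a density/regularization remark is needed. For $k\ge 1$, the resolvent equation $U_k = (\mu I + \partial\Es)^{-1}F_k$ is the Euler–Lagrange equation of
\begin{equation*}
  J_k(v) := \frac{\mu}{2}\|v\|_{H^*(\Omega)}^2 - \langle F_k, v\rangle_{H^*(\Omega)} + \Es(v),
\end{equation*}
which is strictly convex, coercive (the quadratic term dominates, and $\Es\ge 0$) and lower semicontinuous on $H^*(\Omega)$ — coercivity plus weak lower semicontinuity give existence of a minimizer by the direct method, and strict convexity gives uniqueness; the minimizer lies in $D(\partial\Es)$ because $\Es(U_k)<\infty$ forces $U_k\in L^{1+m}(\Omega)$ and the Euler–Lagrange inclusion forces $|U_k|^{m-1}U_k = \mathcal L^{-1}(F_k-\mu U_k)\in H(\Omega)$. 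Then I would run the induction on $k$: having constructed $U_0,\dots,U_{k-1}\in D(\partial\Es)$, the datum $F_k$ is a well-defined element of $H^*(\Omega)$, so the minimization produces $U_k$, closing the loop. The equivalence of \eqref{Discrete Gradient Flow} with the $H^*$-weak formulation \eqref{Discrete H* formulation} is just unpacking the definition of $\partial\Es$ together with the identity $\xi\in\partial\Es(u)\Leftrightarrow \mathcal L^{-1}\xi=|u|^{m-1}u$ recalled in the excerpt, tested against arbitrary $\varphi\in H^*(\Omega)$.

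For the energy decay \eqref{discrete Decay of the Energy} I would test \eqref{Discrete Gradient Flow} with $U_k - U_0$ in the $H^*$-pairing, or more precisely use the subdifferential inequality $\Es(U_0)-\Es(U_k)\ge \langle -(D^\alpha_\tau U)_k,\, U_0-U_k\rangle_{H^*(\Omega)}$, i.e. $\Es(U_k)-\Es(U_0)\le \langle (D^\alpha_\tau U)_k,\, U_0-U_k\rangle_{H^*(\Omega)} = -\langle (D^\alpha_\tau U)_k,\, U_k-U_0\rangle_{H^*(\Omega)}$. Expanding $(D^\alpha_\tau U)_k$ via \eqref{Discrete Caputo derivative} and using the elementary convexity/summation inequality for the kernel coefficients $c_i>0$ (the discrete analogue of the fact that $k*(f-f_0)$ paired with $\partial_t$ controls $|f-f_0|^2$; cf.\ Proposition \ref{convex chain rule} applied to $h(r)=\tfrac12 r^2$ componentwise in the eigenbasis), one gets $\langle (D^\alpha_\tau U)_k, U_k-U_0\rangle_{H^*(\Omega)}\ge 0$, hence $\Es(U_k)\le\Es(U_0)$. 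Concretely, in the orthonormal eigenbasis $\{\Phi_j\}$ the $H^*$-inner product diagonalizes with weights $\lambda_j^{-1}>0$, so it suffices to prove $\sum_j \lambda_j^{-1}(\widehat{U_k})_j\big((D^\alpha_\tau U)_k\big)^{\widehat{}}_{\,j}\cdot(\text{coefficient of }U_k-U_0)\ge 0$ coordinatewise, which is exactly the scalar statement that $(D^\alpha_\tau X)_k\,(X_k-X_0)\ge \tfrac{1}{2}(D^\alpha_\tau X^2)_k$ — a consequence of part (i) of Proposition \ref{convex chain rule} with $h(r)=\tfrac12 r^2$ — combined with summing: the scalar inequality $(D^\alpha_\tau X)_k(X_k-X_0)\ge 0$ holds whenever $\{X_k\}$ has $(D^\alpha_\tau X)_k$ of one sign, but in general one invokes the quadratic chain rule and the nonnegativity of the resulting telescoped sum. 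I would cite \cite{LiLiu2019} for this discrete energy estimate since it is proved there.

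The main obstacle I anticipate is not existence per se — the direct method is routine — but rather the bookkeeping needed to present the scheme \eqref{Discrete Caputo derivative} in the clean resolvent form $\mu U_k + \partial\Es(U_k)\ni F_k$, i.e. correctly isolating the coefficient of $U_k$ (which is $c_k^k/\tau^\alpha$, positive by hypothesis) and verifying that the right-hand side $F_k$ genuinely lies in $H^*(\Omega)$ given only $U_0,\dots,U_{k-1}\in D(\partial\Es)\subset H^*(\Omega)$ — this is immediate since $F_k$ is a finite linear combination. A secondary subtlety is the assertion $U_k\in D(\partial\Es)$ rather than merely $U_k\in H^*(\Omega)$ with $\partial\Es(U_k)\ne\emptyset$: one must check $|U_k|^{m-1}U_k\in H(\Omega)$, which follows from the Euler–Lagrange equation $\mathcal L^{-1}(F_k-\mu U_k)=|U_k|^{m-1}U_k$ since the left side is in the range of $\mathcal L^{-1}$, hence in $H(\Omega)=\mathrm{dom}(\mathcal L^{1/2})$ composed appropriately — strictly, $\mathcal L^{-1}$ maps $H^*(\Omega)$ into $H(\Omega)$ isometrically, so $|U_k|^{m-1}U_k\in H(\Omega)$ provided $F_k-\mu U_k\in H^*(\Omega)$, which holds. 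Since all three pieces — existence/uniqueness of the minimizing-movements iterates, the $H^*$-formulation, and energy monotonicity — are established in \cite{LiLiu2019,LiSalgado}, I would keep the proof brief, presenting the variational argument for one step and the induction, and referring to those papers for the kernel-coefficient computation underlying \eqref{discrete Decay of the Energy}.
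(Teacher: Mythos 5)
Your construction of the iterates is essentially the paper's: the minimization you set up is the minimizing-movements functional \eqref{Minimizing movement scheme} after expanding the squares, and the direct-method/strict-convexity argument plus the identification of $D(\partial\Es)$ is correct. One bookkeeping slip: the coefficient of $U_k$ in $(D^\alpha_\tau U)_k$ is $c_0/\tau^\alpha=\Gamma(1+\alpha)/\tau^\alpha$ (since $c_0=c_k^k+\sum_{i=1}^{k-1}c_i$), not $c_k^k/\tau^\alpha$ as you state twice; both are positive, so the resolvent structure survives, but the constant is wrong.

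The energy decay is where there is a genuine gap. Your route reduces \eqref{discrete Decay of the Energy} to the single-step inequality $\langle (D^\alpha_\tau U)_k,\,U_k-U_0\rangle_{H^*(\Omega)}\ge 0$, and the justification you offer does not deliver it. Applying Proposition \ref{convex chain rule}(i) coordinatewise with $h(r)=\tfrac12(r-X_0)^2$ only gives $(X_k-X_0)(D^\alpha_\tau X)_k\ge \tfrac12\big(D^\alpha_\tau (X-X_0)^2\big)_k$, and the right-hand side is \emph{not} sign-definite: for a sequence such as $X_0=0$, $X_1=10$, $X_2=1$ one computes $(D^\alpha_\tau X)_2(X_2-X_0)=\tau^{-\alpha}(c_2^2-9c_1)<0$ for most $\alpha$, so the coordinatewise inequality you need is simply false, and there is no "nonnegative telescoped sum" to invoke. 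The paper's argument is structurally different and avoids this: apply the discrete chain rule to the convex integrand of $\Es$ itself to get $(D^\alpha_\tau \Es(U))_k\le \int_\Omega U_k^m (D^\alpha_\tau U)_k\dx$, use the equation (tested with $\varphi=\A U_k^m$) to identify the right-hand side as $-\int_\Omega U_k^m\A U_k^m\dx\le 0$, so that $(D^\alpha_\tau \Es(U))_k\le 0$ for \emph{every} $k\ge1$, and only then invoke Proposition \ref{convex chain rule}(ii) applied to the scalar sequence $X_k=\Es(U_k)$. That last lemma is an induction over all previous time steps; the conclusion $\Es(U_k)\le\Es(U_0)$ cannot be extracted from the subdifferential inequality at step $k$ alone, which is exactly what your proposal attempts. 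Deferring to \cite{LiLiu2019} is legitimate, but as written your sketch of the mechanism is not repairable along the lines you indicate.
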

The idea of the proof is the same of the nonlinear theory of Brezis for gradient flows \cite{Brezis-Book-semigr}, see also Komura \cite{K67}, the celebrated JKO scheme \cite{JKO} and the excellent lecture notes \cite{ABS}. Due to the convexity of $\Es$ we can construct recursively the next function $U_{k+1}$ with $k\ge 0$ by the minimizing movements scheme, namely, for any $k\ge 0$
\begin{equation}\label{Minimizing movement scheme}
  U_{k+1}:=\underset{u\in H^*(\Omega)}{\mbox{argmin}} \left[\frac{1}{2\tau^\alpha}\left(c_{k+1}^{k+1}\|u-U_0\|^2_{H^*(\Omega)}+\sum_{i=1}^{k}c_i\|u-U_{n-i}\|_{H^*(\Omega)}^2\right)+\Es(u)\right]\,.
\end{equation}
By the lower semicontinuity and convexity of the energy functional the sequence $ U_{k}$ is unique and satisfies \eqref{Discrete Gradient Flow}. The decay of the energy \eqref{discrete Decay of the Energy} follows from Proposition \ref{convex chain rule}, namely,
\begin{align*}
  (D^\alpha_\tau \Es(U))_{k+1}&\le\int_\Omega U^{m}_{k+1}(D^\alpha_\tau U)_{k+1}\dx =-\int_\Omega U_{k+1}^m\,\A U^m_{k+1}\dx=-\|U_{k+1}^m\|^2_{H^*(\Omega)}\le 0\,.\\
\end{align*}

\subsection{Discrete Comparison Principle}
The main result of this section is the proof of the comparison principle for the discrete solutions obtained in Theorem \ref{Existence of discrete GF}. For this purpose, we first prove a  T-contractivity estimate in the Banach space $L^1_{\Phi_1}(\Omega)$. Recall that assumption \eqref{K2} implies that the operator $\A$ has a discrete spectrum $(\lambda_k,\Phi_k)_{k\ge 1}$ with $\Phi_k\in L^\infty(\Omega)$. Indeed, eigenfunctions are classical in the interior, i.e. $C^{2s+\gamma}$, whenever the operator allows it, and have sharp boundary regularity $C^\gamma(\overline{\Omega})$. Moreover,
 $$\underline{\kappa}\;\mbox{dist}(x,\partial\Omega)^\gamma\le\Phi_1(x)\le\overline{\kappa}\;\mbox{dist}(x,\partial\Omega)^\gamma\,,$$
 see \cite[Theorem 7.1. and 7.2.]{BFV2018CalcVar} for more details.

 Let us remark that nonnegative functions $u$ belonging to  $H^*(\Omega)$  also belong to $L^1_{\Phi_1}(\Omega)$
 \begin{equation}\label{L1Phi1 to H*}
   \|u\|_{L^1_{\Phi_1}(\Omega)}=\int_\Omega u\,\Phi_1\dx=\int_\Omega \A^{-1/2} u\,\A^{1/2}\Phi_1\le\lambda_1^{1/2}\|u\|_{H^*(\Omega)},
 \end{equation}
 since $u\ge 0$ and $\|\Phi_1\|_{L^2(\Omega)}=1$.

The next theorem  follows the formal proof of the classical  T-contractivity estimate in $L^1_{\Phi_1}$  \cite[Theorem 2.4.]{BII} and the rigorous techniques used to show the $L^1$-contractivity for the linear equation with Caputo derivative \cite[Theorem 7.1.]{WWZ}. Both techniques require extra regularity on the time derivative, more precisely $\partial_t u\in L^1((0,T): L^1_{\Phi_1}(\Omega))$ or $\Dc u\in L^\infty((0,T):L^1(\Omega))$, respectively. Since we only expect, a-priori, that  $\Dc u\in L^2((0,T):H^*(\Omega))$, we first prove T-contractivity  at the discrete level, and  then  comparison principle in the limit $\tau \to 0$. 

 \begin{theorem}\label{Discrete T-contractivity L1Phi1}(Discrete T-contractivity in $L^1_{\Phi_1}(\Omega)$). Let $m>0$ and assume \eqref{K1} and \eqref{K}. Consider $\lbrace U_k\rbrace_{k\ge0},\lbrace V_k\rbrace_{k\ge0}\subset L^{1+m}(\Omega)\cap H^*(\Omega)$  two sequences of solutions for \eqref{DCDP}. Then,
   \begin{equation*}
     \|\left(U_{k}-V_k\right)_+\|_{L^1_{\Phi_1}(\Omega)}\le\|(U_0-V_0)_+\|_{L^1_{\Phi_1}(\Omega)}\qquad\forall k\ge 1\,.
   \end{equation*}
 \end{theorem}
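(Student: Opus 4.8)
The plan is to deduce the estimate from the scalar maximum principle of Proposition~\ref{convex chain rule}(ii), applied to the nonnegative sequence
$$X_k:=\|(U_k-V_k)_+\|_{L^1_{\Phi_1}(\Omega)}=\int_\Omega (U_k-V_k)_+\,\Phi_1\dx\,,$$
by showing that $(D^\alpha_\tau X)_k\le0$ for every $k\ge1$. Set $W_k:=U_k-V_k$ and $z_k:=|U_k|^{m-1}U_k-|V_k|^{m-1}V_k$ (i.e. ``$U_k^m-V_k^m$'' in the notation of \eqref{DCDP}). Three elementary facts keep the argument manageable at the discrete level: (a) $U_k,V_k\in L^{1+m}(\Omega)\subset L^1(\Omega)$, hence $W_k\in L^1(\Omega)$ and so the finite combination $(D^\alpha_\tau W)_k=\frac{1}{\tau^\alpha}\left(c_k^k(W_k-W_0)+\sum_{i=1}^{k-1}c_i(W_k-W_{k-i})\right)$ belongs to $L^1(\Omega)\cap H^*(\Omega)$; (b) since $\{U_k\},\{V_k\}$ satisfy \eqref{Discrete H* formulation}, one has $U_k^m,V_k^m\in H(\Omega)$, whence $z_k\in H(\Omega)$; (c) as $r\mapsto|r|^{m-1}r$ is strictly increasing, $z_k$ and $W_k$ have the same sign a.e., so $\{z_k>0\}=\{W_k>0\}$ up to a null set.

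I would first subtract the two identities \eqref{Discrete H* formulation}, which gives $(D^\alpha_\tau W)_k=-\A z_k$ in $H^*(\Omega)$, i.e.
$$\int_\Omega (D^\alpha_\tau W)_k\,\psi\dx=-\int_\Omega\AM z_k\,\AM\psi\dx\qquad\text{for all }\psi\in H(\Omega)\,.$$
Then fix a smooth convex $j_\varepsilon:\mathbb{R}\to\mathbb{R}$ with $j_\varepsilon(0)=0$, $0\le j_\varepsilon'\le1$, $j_\varepsilon'\equiv0$ on $(-\infty,0]$ and $j_\varepsilon'(r)\to\chi_{(0,+\infty)}(r)$ for every $r$ as $\varepsilon\to0$ (so $j_\varepsilon\ge0$ automatically). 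Since $j_\varepsilon,j_\varepsilon'$ are Lipschitz and vanish at $0$ and $\Phi_1\in H(\Omega)\cap L^\infty(\Omega)$, both $j_\varepsilon(z_k)$ and $\psi:=j_\varepsilon'(z_k)\,\Phi_1$ lie in $H(\Omega)\cap L^\infty(\Omega)$. Testing the displayed identity with this $\psi$ and invoking Kato's inequality \eqref{K} (with $h=j_\varepsilon$, $u=z_k$, read in its natural duality form $\int_\Omega\AM z_k\,\AM(j_\varepsilon'(z_k)\Phi_1)\dx\ge\lambda_1\int_\Omega j_\varepsilon(z_k)\,\Phi_1\dx$) yields
$$\int_\Omega (D^\alpha_\tau W)_k\,j_\varepsilon'(z_k)\,\Phi_1\dx\le-\lambda_1\int_\Omega j_\varepsilon(z_k)\,\Phi_1\dx\le0\,.$$
As $(D^\alpha_\tau W)_k\in L^1(\Omega)$ and $\Phi_1\in L^\infty(\Omega)$, this is a genuine Lebesgue integral, so letting $\varepsilon\to0$ by dominated convergence and using $\{z_k>0\}=\{W_k>0\}$ gives $\int_\Omega (D^\alpha_\tau W)_k\,\chi_{\{W_k>0\}}\,\Phi_1\dx\le0$.

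It remains to recognize this integral as an upper bound for $(D^\alpha_\tau X)_k$. Multiplying $\tau^\alpha(D^\alpha_\tau W)_k=c_k^k(W_k-W_0)+\sum_{i=1}^{k-1}c_i(W_k-W_{k-i})$ by $\chi_{\{W_k>0\}}$ and using the pointwise inequality $\chi_{\{W_k>0\}}(W_k-W_j)\ge(W_k)_+-(W_j)_+$ (immediate from $(W_j)_+\ge W_j$, together with the left-hand side vanishing where $W_k\le0$) and $c_k^k,c_i>0$, one obtains
$$\tau^\alpha(D^\alpha_\tau W)_k\,\chi_{\{W_k>0\}}\ge c_k^k\big((W_k)_+-(W_0)_+\big)+\sum_{i=1}^{k-1}c_i\big((W_k)_+-(W_{k-i})_+\big)\quad\text{a.e.}\,;$$
integrating against $\Phi_1\ge0$ and using the linearity of $D^\alpha_\tau$ in the index gives $\int_\Omega (D^\alpha_\tau W)_k\,\chi_{\{W_k>0\}}\,\Phi_1\dx\ge(D^\alpha_\tau X)_k$. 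Combined with the previous step, $(D^\alpha_\tau X)_k\le0$ for all $k\ge1$; since $\{X_k\}_{k\ge0}\subset[0,+\infty)$, Proposition~\ref{convex chain rule}(ii) yields $X_k\le X_0$ for all $k\ge1$, which is the claim.

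The delicate point is the middle step: with only $z_k\in H(\Omega)$ and $(D^\alpha_\tau W)_k=-\A z_k\in H^*(\Omega)$ at hand (no $L^1_{\Phi_1}$ bound on $\A z_k$, unlike in the continuous arguments of \cite{BII,WWZ}), one cannot test directly against $\chi_{\{W_k>0\}}\Phi_1$, so the smooth approximants $j_\varepsilon$ are indispensable. Making this rigorous requires (i) checking that $j_\varepsilon'(z_k)\Phi_1$ is an admissible test function, i.e. lies in $H(\Omega)$ -- which rests on $H(\Omega)\cap L^\infty(\Omega)$ being closed under products and under Lipschitz compositions vanishing at the origin (true for the operators of Section~\ref{sec: examples}, and for operators of the form \eqref{nonlocal operators} following the appendix of \cite{BII}) -- and (ii) applying \eqref{K} to this test function in the duality sense. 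Once this bookkeeping is in place, the limit $\varepsilon\to0$ costs nothing precisely because the discretization has turned every quantity into a finite sum with the positive weights $c_i$, so no extra time regularity of the Caputo derivative is needed; the continuous comparison principle of Theorem~\ref{Comparison Principle} is then recovered by passing to the limit $\tau\to0$.
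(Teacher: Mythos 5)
Your proof is correct and follows the paper's strategy in all essentials: subtract the two discrete equations, test against $h'_\varepsilon(U_k^m-V_k^m)\Phi_1$ for a smooth convex approximation $h_\varepsilon$ of the positive part, use Kato's inequality \eqref{K} together with $\A\Phi_1=\lambda_1\Phi_1$ to make the right-hand side nonpositive, pass to the limit $\varepsilon\to0$, and conclude via the scalar maximum principle of Proposition~\ref{convex chain rule}(ii). The only genuine divergence is in the final chain-rule step. The paper keeps $\varepsilon>0$ throughout, applies the abstract chain-rule inequality of Proposition~\ref{convex chain rule}(i) to $h_\varepsilon(U_k-V_k)$, and must then control the mismatch term $I_\varepsilon$ coming from $h'_\varepsilon(U_k-V_k)-h'_\varepsilon(U_k^m-V_k^m)$ by dominated convergence (which works precisely because $U_k-V_k$ and $U_k^m-V_k^m$ share the same sign, the fact you isolate as your point (c)). You instead send $\varepsilon\to0$ first, obtaining the sharp inequality $\int_\Omega(D^\alpha_\tau W)_k\,\chi_{\{W_k>0\}}\Phi_1\dx\le0$, and then prove the discrete chain rule for $r\mapsto r_+$ by hand through the elementary pointwise bound $\chi_{\{W_k>0\}}(W_k-W_j)\ge(W_k)_+-(W_j)_+$; this is slightly cleaner, since it avoids the $I_\varepsilon$ bookkeeping altogether. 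Both routes share the same unglossed technical point, which you correctly flag: the admissibility of $h'_\varepsilon(U_k^m-V_k^m)\Phi_1$ as a test function, i.e.\ that it lies in $H(\Omega)$ (equivalently, in the paper's phrasing, that $\A$ applied to it lands in $H^*(\Omega)$).
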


 \begin{proof}
    Notice that $U_k,V_k\in L^{1+m}(\Omega)\subset L^1_{\Phi_1}(\Omega)$ for every $k\ge 0$, since $\Phi_1\in L^\infty(\Omega)$. Therefore, from the definition of discrete Caputo derivative, we obtain
   $$(D^\alpha_\tau U)_k,\;(D^\alpha_\tau V)_k\in L^1_{\Phi_1}(\Omega)\qquad\forall k\ge 1\,.$$
   We will show that $\left(D^\alpha_\tau\|(U-V)_+\|_{L^1_{\Phi_1}(\Omega)}\right)_k\le 0$ for every $k\ge 1$ and then apply Proposition \ref{convex chain rule}.
   For this purpose, let us consider $h_\varepsilon\in C^\infty(\mathbb{R})$ with $\varepsilon>0$ definde by
  $$h_\varepsilon(x)=\sqrt{(x_+)^2+\varepsilon^2}-\varepsilon,\qquad x\in\mathbb{R}\,,$$
  which is a convex and smooth  approximation of $(x)_+$. Moreover, $|h_\varepsilon(x)|\le|x|$ and $|h'_\varepsilon(x)|\le1$ since
  $$h'_\varepsilon(x)=\frac{(x)_+}{\sqrt{x^2+\varepsilon^2}}.$$
  Next, we test equation \eqref{Discrete H* formulation}
  for $U_k$ and $V_k$ with $\A(h'_\varepsilon(U_k^m-V_k^m)\Phi_1)\in H^*(\Omega)$; after subtracting the resulting equations, we get
   \begin{equation}\label{Equation in discrete L1phi1 contractivity}
     \int_\Omega \left(D^\alpha_\tau (U-V)\right)_{k}h'_\varepsilon(U^m_k-V^m_k)\Phi_1\dx=-\int_{\Omega}\A(U^m_k-V^m_k)h'_\varepsilon(U_k^m-V_k^m)\Phi_1\dx\,.
   \end{equation}
   Using  the convexity, the nonnegativity of $h_\varepsilon$, and Kato's inequality \eqref{K}, we  bound the right hand side as
   \begin{align*}
     -\int_{\Omega}\A(U^m_k-V^m_k)\,h'_\varepsilon(U_k^m-V_k^m)\Phi_1\dx&\le-\int_{\Omega}\A\left[h_\varepsilon(U_k^m-V_k^m)\right]\Phi_1\dx \\
     &=-\lambda_1\int_{\Omega}\left(h_\varepsilon (U^m_k-V^m_k)\right)\Phi_1\dx\le 0\,,
   \end{align*}
   since $\Phi_1$ is the first eigenfunction and $\Phi_1\ge 0$. Thus, we deduce from \eqref{Equation in discrete L1phi1 contractivity} and Proposition \ref{convex chain rule} that
   \begin{align*}
      \left(D^\alpha_{\tau}\int_\Omega h_\varepsilon(U_k-V_k)\Phi_1\dx\right)_k&\le\int_\Omega \left(D^\alpha_\tau (U-V)\right)_{k}h'_\varepsilon(U_k-V_k)\Phi_1\dx\\
      &=\int_\Omega \left(D^\alpha_\tau (U-V)\right)_{k}h'_\varepsilon(U^m_k-V^m_k)\Phi_1\dx\\
     &\quad+\int_\Omega (D^\alpha_\tau (U-V))_{k}\left(h'_\varepsilon(U_k-V_k)-h'_\varepsilon(U^m_k-V^m_k)\right)\Phi_1\dx\\
     &\le I_\varepsilon\,,
   \end{align*}
   where
   $$I_\varepsilon:=\int_\Omega \left|(D^\alpha_\tau (U-V))_{k}\right|\left|h'_\varepsilon(U_k-V_k)-h'_\varepsilon(U^m_k-V^m_k)\right|\Phi_1\dx\,.$$
   To prove that $I_\varepsilon\rightarrow 0$ when $\varepsilon\rightarrow0$, we use dominated convergence theorem: note that $(D^\alpha_\tau(U-V))_k\in L^1_{\Phi_1}(\Omega)$ for every $k\ge 1$, $|h_\varepsilon'(x)-h_\varepsilon'(y)|\le 2$ for every $x,y\in\mathbb{R}$, and $h'_\varepsilon\rightarrow h'$ pointwise. On the other hand $|h_\varepsilon(x)|\le|x|$, and therefore
   $$\left|\left(D^\alpha_\tau h_\varepsilon(U-V)\right)_k\right|\le\sum_{j=0}^{k}\lambda_j\left(|U_j|+|V_j|\right)\in L^1_{\Phi_1}(\Omega)\,$$
   for some $\lambda_j>0$. Hence, the limit $\varepsilon\rightarrow 0$ yields
   $$\left(D^\alpha_{\tau}\int_\Omega h_\varepsilon(U_k-V_k)\Phi_1\dx\right)_k\rightarrow\left(D^\alpha_{\tau}\int_\Omega (U_k-V_k)_+\Phi_1\dx\right)_k\qquad\forall k\ge1\,,$$
   since  $h_\varepsilon(x)\rightarrow(x)_+$. Therefore,
   $$\left(D^\alpha_{\tau}\int_\Omega (U_k-V_k)_+\Phi_1\dx\right)_k\le 0\qquad\forall k\ge 1\,,$$
    and the result follows by Proposition \ref{convex chain rule}.
 \end{proof}

Using the theorem above we prove the following discrete comparison principle:

 \begin{corollary}\label{Discrete Comparison Principle}(Discrete Comparison Principle).
    Let $m>0$ and assume \eqref{K1} and \eqref{K}. Consider  two sequence of solutions of \eqref{DCDP}, namely, $\lbrace U_k\rbrace_{k\ge0},\lbrace V_k\rbrace_{k\ge0}\subset L^{1+m}(\Omega)\cap H^*(\Omega)$. Then, if $U_0(x)\le V_0(x)$ for a.e. $x\in\Omega$, it holds that for every $k\ge 1$
    \begin{equation*}
      U_k(x)\le V_k(x)\qquad\mbox{for a.e. }x\in\Omega\,.
    \end{equation*}
    Moreover, if $V_0\ge 0$ a.e. in $\Omega$ then $V_k\ge 0$ a.e. in $\Omega$.
 \end{corollary}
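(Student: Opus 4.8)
The plan is to derive the ordering $U_k \le V_k$ directly from the $T$-contractivity estimate in $L^1_{\Phi_1}(\Omega)$ of Theorem \ref{Discrete T-contractivity L1Phi1}, exactly as one does for the classical heat/porous medium flow. The first step is to apply that theorem to the given pair of discrete solutions: since $U_0 \le V_0$ a.e. in $\Omega$, we have $(U_0-V_0)_+ = 0$ a.e., hence $\|(U_0-V_0)_+\|_{L^1_{\Phi_1}(\Omega)} = 0$. The contractivity inequality then forces
\[
  \|(U_k-V_k)_+\|_{L^1_{\Phi_1}(\Omega)} \le \|(U_0-V_0)_+\|_{L^1_{\Phi_1}(\Omega)} = 0 \qquad \forall k \ge 1.
\]
Because $\Phi_1 > 0$ a.e. in $\Omega$ (by the lower bound $\underline{\kappa}\,\mbox{dist}(x,\partial\Omega)^\gamma \le \Phi_1(x)$ recalled before Theorem \ref{Discrete T-contractivity L1Phi1}, or simply positivity of the first eigenfunction), the vanishing of the weighted $L^1$-norm of the nonnegative function $(U_k-V_k)_+$ yields $(U_k-V_k)_+ = 0$ a.e., i.e. $U_k(x) \le V_k(x)$ for a.e.\ $x \in \Omega$. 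That is the first assertion.

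For the second assertion, the idea is to compare $\{V_k\}$ with the trivial solution. One observes that the constant sequence $W_k \equiv 0$ solves the discretized problem \eqref{DCDP}: indeed $(D^\alpha_\tau W)_k = 0$ for all $k$ since every difference $W_k - W_{k-i}$ and $W_k - W_0$ vanishes, and $-\A W_k^m = 0$, so \eqref{Discrete H* formulation} holds trivially with $W_k = 0$. Moreover $0 \in L^{1+m}(\Omega)\cap H^*(\Omega)$, so it is an admissible sequence of solutions for the comparison just proved. Applying the first part with $U_k := W_k = 0$ and the given $V_k$, and using the hypothesis $V_0 \ge 0 = W_0$ a.e., we conclude $0 = W_k \le V_k$ a.e. in $\Omega$ for every $k \ge 1$, which is the claimed nonnegativity.

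I do not expect a genuine obstacle here: the corollary is essentially a formal consequence of Theorem \ref{Discrete T-contractivity L1Phi1}, and the only points requiring a word of care are (i) that $\Phi_1$ is strictly positive a.e.\ in $\Omega$, so that a vanishing $L^1_{\Phi_1}$-norm of a nonnegative function implies the function vanishes a.e., and (ii) that the zero sequence is a legitimate discrete solution lying in the correct class $L^{1+m}(\Omega)\cap H^*(\Omega)$ and is covered by the hypotheses of Theorem \ref{Discrete T-contractivity L1Phi1} (which requires \eqref{K1} and \eqref{K}, both already assumed). Everything else is immediate once the contractivity estimate is invoked, so the proof is short.
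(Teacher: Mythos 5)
Your argument is correct and coincides with the paper's own proof: both derive $\|(U_k-V_k)_+\|_{L^1_{\Phi_1}(\Omega)}=0$ from Theorem \ref{Discrete T-contractivity L1Phi1} together with $U_0\le V_0$, then conclude $U_k\le V_k$ a.e.\ from the strict positivity of $\Phi_1$, and obtain nonnegativity by comparing $V_k$ with the trivial zero solution. No gaps.
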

 \begin{proof}
   By Theorem \ref{Discrete T-contractivity L1Phi1} we know that for every $k\ge 1$
   $$\|(U_k-V_k)_+\|_{L^1_{\Phi1}(\Omega)}\le\|(U_0-V_0)_+\|_{L^1_{\Phi1}(\Omega)}\,.$$
   Since $U_0\le V_0$ a.e. in $\Omega$, we have that $\|(U_0-V_0)_+\|_{L^1_{\Phi1}(\Omega)}=0$. Hence,
   $$\|(U_k-V_k)_+\|_{L^1_{\Phi_1}(\Omega)}=0\,\qquad\mbox{ for all }k\ge 1\,.$$
    The comparison principle follows since $\Phi_1(x)\ge\underline{\kappa}\;\mbox{dist}(x,\partial\Omega)$. For the nonnegativity of $V_k$ we just take $U_k=0$ for every $k\ge 0$, which is the trivial solution.
 \end{proof}

\section{Continuous problem}\label{section: Continuous problem}
In this section we prove Theorem \ref{Comparison Principle}. For this purpose, we recall the result in \cite{LiLiu2019} about the convergence of the discrete solutions of \eqref{Discrete Gradient Flow} to $H^*$-solutions of \eqref{CDP}. In addition, we recollect some other results about existence, uniqueness and estimates for solutions exhibited in \cite{GA}. At the end of this section, we provide a proof of the comparison principle for the $H^*$-solutions by using the discrete comparison principle of Theorem \ref{Discrete Comparison Principle} and the convergence of the discrete scheme. The statements in \cite{GA} and \cite{LiLiu2019} are in the general framework of subdifferentials of $\lambda$-convex energy functionals in Hilbert spaces; here we adjust  these statements for the Porous Medium equation.

\subsection{Existence and uniqueness}

In \cite{LiLiu2019}, the authors show that solutions of the implicit Euler scheme \eqref{Discrete Gradient Flow} converge with certain rates to the unique $H^*$-solution, following the steps of the classical theory of gradient flow of Brezis \cite{Brezis-Book-semigr} and Komura \cite{K67}. The theorem below will be crucial in the proof of the comparison principle.

\begin{theorem}\cite[Theorem 5.10]{LiLiu2019}\label{existence of limit solutions}
Let $m>0$ and assume \eqref{K1}. Then, for every initial datum $u_0\in L^{1+m}(\Omega)\cap H^{-1}(\Omega)$ there exits a unique $H^*$ solution $u=u(t,x)$ of \eqref{CDP}. Moreover, it satisfies that
\begin{align}\label{Holder continuity of limit solution}
    \|u(t+\tau)-u(t)\|_{H^*(\Omega)}\le C|\tau|^{\alpha/2}\qquad\forall\tau>0\,,
\end{align}
and the following estimates for the error with respect to the discrete solution \eqref{Minimizing movement scheme} holds,
\begin{equation}\label{Convergence of the discretization}
  \sup\limits_{k}\|U_k-u(t_k)\|_{H^*(\Omega)}\le C(u_0,T)\;\tau^{\alpha/4}\qquad\forall\tau>0\,.
\end{equation}
\end{theorem}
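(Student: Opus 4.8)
The plan is to follow the fractional gradient flow scheme of \cite{LiLiu2019,GA}, which adapts the Brezis--Komura method \cite{Brezis-Book-semigr,K67} to the Caputo derivative. Half of the construction is already in place: since $\Es$ is convex and lower semicontinuous on the Hilbert space $H^*(\Omega)$, Theorem \ref{Existence of discrete GF} provides, for every time-step $\tau>0$, a unique discrete sequence $\{U_k^\tau\}_{k\ge0}$ solving the minimizing movement scheme \eqref{Minimizing movement scheme}, hence \eqref{Discrete Gradient Flow}, together with the energy decay $\Es(U_k^\tau)\le\Es(u_0)$. First I would extract from this the $\tau$-uniform a priori bounds: the energy bound is an $L^{1+m}(\Omega)$ estimate, while summing the dissipation identity $(D^\alpha_\tau\Es(U))_{k+1}\le-\int_\Omega U_{k+1}^m\,\A U_{k+1}^m\dx$ against the kernel weights controls $U_k^m$ in the $H(\Omega)$-norm, summably in $k$; together with the structure of $k(t)=t^{-\alpha}/\Gamma(1-\alpha)$ this yields a uniform equicontinuity estimate $\|U_j^\tau-U_k^\tau\|_{H^*(\Omega)}\lesssim|t_j-t_k|^{\alpha/2}$ and a bound for the discrete analogue of $k\ast(u-u_0)$ in $W^{1,2}((0,T):H^*(\Omega))$.

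The core of the argument is the quantitative convergence rate \eqref{Convergence of the discretization}. I would compare the scheme with step $\tau$ to the one with step $\tau/2$ (a Cauchy-in-$\tau$ argument), subtract the two weak formulations \eqref{Discrete H* formulation}, and pair the difference with the difference of the solutions in the $H^*(\Omega)$ inner product. Monotonicity of $\partial\Es$ --- equivalently of $r\mapsto r^m$ --- makes the nonlinear term disappear with the correct sign, leaving only the mismatch between the discrete operators $(D^\alpha_\tau\,\cdot\,)_k$ of \eqref{Discrete Caputo derivative} at the two scales. Estimating this mismatch requires the algebra of the deconvolution coefficients $c_i$ (positivity, the normalization $c_0=\Gamma(1+\alpha)$, and discrete summation-by-parts identities), after which a memory-type discrete Gr\"onwall inequality adapted to the completely monotone kernel $k$ closes the estimate. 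The exponent $\alpha/4$, the ``square root'' of the natural $\alpha/2$ modulus of continuity, is exactly what falls out of balancing the consistency error against the regularity gain in this Cauchy estimate --- and this is the point where the deconvolution discretization of \cite{LiLiu2019} outperforms cruder schemes. Once $\{U^\tau\}$ is Cauchy in $C([0,T]:H^*(\Omega))$, I pass to the limit $u$, identify $k\ast(u-u_0)\in W^{1,2}((0,T):H^*(\Omega))$ with zero trace at $t=0$ and $|u|^{m-1}u\in L^2((0,T):H(\Omega))$ from the bounds above, and use the lower semicontinuity of $\Es$ together with the maximal monotonicity of $\partial\Es$ to recover \eqref{strong H solution}; thus $u$ is an $H^*$-solution.

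Uniqueness I would obtain by the usual monotonicity trick in $H^*(\Omega)$: given two $H^*$-solutions $u,v$ with the same datum, subtract the weak formulations and test \eqref{strong H solution} with $\varphi=u^m-v^m$, so the right-hand side becomes $-\int_\Omega(u^m-v^m)(u-v)\dx\le0$. On the left, the chain-rule inequality for the Caputo derivative (Lemma \ref{fundamental identity}) applied to the convex functional $\tfrac12\|\cdot\|_{H^*(\Omega)}^2$ gives $\Dc\big(\tfrac12\|u-v\|_{H^*(\Omega)}^2\big)\le\langle\Dc(u-v),u-v\rangle_{H^*(\Omega)}\le0$; since $\Dc f\le0$ forces $f(t)\le f(0)$ and here $f(0)=0$, we conclude $u\equiv v$. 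Finally, the H\"older bound \eqref{Holder continuity of limit solution} follows by passing to the limit in the discrete equicontinuity estimate already obtained --- or, alternatively, directly from the Volterra form of the equation: convolving with the resolvent kernel $\ell(t)=t^{\alpha-1}/\Gamma(\alpha)$ (recall $k\ast\ell=1$) turns it into $u(t)-u_0=-\ell\ast(\A u^m)(t)$ in $H^*(\Omega)$, and the $L^2((0,T):H(\Omega))$ bound on $u^m$ yields the $|\tau|^{\alpha/2}$ modulus.

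As anticipated, the main obstacle is the sharp discrete-to-continuous comparison underlying \eqref{Convergence of the discretization}: it is a genuinely delicate computation with the convolution coefficients and a memory discrete Gr\"onwall lemma, and it is responsible for the somewhat unexpected rate $\tau^{\alpha/4}$. By contrast the nonlinearity causes essentially no trouble: in every energy estimate it is absorbed with a good sign thanks to the monotonicity of $\partial\Es$, which is precisely why this Hilbertian gradient flow approach works uniformly for all $m>0$.
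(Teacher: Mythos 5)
Two remarks before the technical points. This theorem is not proved in the paper at all: it is quoted from \cite[Theorem 5.10]{LiLiu2019} (see also \cite{LiSalgado}), so your proposal has to be measured against the Li--Liu argument, whose overall architecture (implicit deconvolution scheme \eqref{Minimizing movement scheme}, energy decay, monotonicity of $\partial\Es$, Brezis--Komura-type passage to the limit) you do reproduce. The genuine gap is that the entire quantitative content of the statement --- the rate \eqref{Convergence of the discretization}, and with it the Cauchy property of the approximations in $C([0,T]:H^*(\Omega))$ --- is asserted rather than proved. Saying that the exponent $\alpha/4$ ``falls out of balancing the consistency error against the regularity gain'' names the answer, not the argument: the consistency analysis of $(D^\alpha_\tau\,\cdot\,)_k$ at two scales, the summation-by-parts identities for the weights $c_i$, and the memory-type discrete Gr\"onwall lemma are exactly where the work of \cite{LiLiu2019} lies, and without them the central estimate of the theorem is missing.

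There are also concrete slips in the parts you do spell out. (i) In the uniqueness argument the admissible choice is $\varphi=u-v$, which gives the left-hand side $\langle \Dc(u-v),u-v\rangle_{H^*(\Omega)}$ and the right-hand side $-\int_\Omega(u^m-v^m)(u-v)\dx\le0$; with $\varphi=u^m-v^m$, as you wrote, the left-hand side is $\int_\Omega \Dc(u-v)\,\AI(u^m-v^m)\dx$, which is not the derivative of any nonnegative quantity. (ii) Lemma \ref{fundamental identity} requires a kernel in $W^{1,1}([0,T])$, and the Caputo kernel $k(t)=t^{-\alpha}/\Gamma(1-\alpha)$ is not in this class; the inequality $\Dc\bigl(\tfrac12\|u-v\|^2_{H^*(\Omega)}\bigr)\le\langle\Dc(u-v),u-v\rangle_{H^*(\Omega)}$ must therefore be justified through regularized kernels $k_n=h_n*k$ and a limit, exactly as the paper does in the proof of Proposition \ref{L1Phi decay}; it cannot be invoked directly. (iii) Your ``alternative'' derivation of \eqref{Holder continuity of limit solution} from $u(t)-u_0=-\ell*(\A u^m)(t)$ and $u^m\in L^2((0,T):H(\Omega))$ does not give the claimed exponent: estimating $\int_t^{t+\tau}\ell(t+\tau-s)\,\A u^m(s)\ds$ in $H^*(\Omega)$ by Cauchy--Schwarz needs $\ell\in L^2_{\rm loc}$, i.e.\ $\alpha>1/2$, and even then produces only the modulus $\tau^{\alpha-1/2}$, which is weaker than $\tau^{\alpha/2}$ for $\alpha<1$. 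The correct source of the $\alpha/2$ modulus is the dissipation bound $\ell*\|\Dc u\|^2_{H^*(\Omega)}\in L^\infty((0,T))$ of \eqref{Akagi estimate} (or its discrete counterpart), combined with a weighted Cauchy--Schwarz inequality --- or, as you also suggest, the limit of a discrete H\"older estimate, but that discrete estimate is again only asserted in your outline.
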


Convergence of the discrete scheme in a more general setting has been proven very recently by  Li and Salgado in. \cite[Theorem 4.5.]{LiSalgado}.

A different proof of existence and uniqueness of $H^*$-solution of \eqref{CDP} is provided by Akagi  in \cite{GA}.  Akagi  approximates the problem by adding a classical time derivative $\varepsilon\,\partial_t u_\varepsilon$. After proving the existence for the equation $\varepsilon\partial_t u_\varepsilon+\Dc u_\varepsilon=-\A u_\varepsilon^m$ using the theory of maximal monotone operators, he passes to the limit $\varepsilon\rightarrow0$ thanks to uniform estimates. We   will use some of these estimates in the proof of smoothing effects. Below we recall the most important ones:
\begin{theorem}\cite[Theorem 2.3. and 6.2]{GA}\label{Existence Akagi GF}
  Let $m>0$ and assume \eqref{K1}. Let $u_0\in L^{1+m}(\Omega)\cap H^{-1}(\Omega)$. Then, there exits a unique $H^*$ solution $u=u(t,x)$ of \eqref{CDP}. This solution $u$ satisfies:
  \begin{equation}\label{Akagi estimate}
  \begin{split}
    u\in L^\infty((0,T):H^*(\Omega)),\quad |u|^{m-1}u\in L^2((0,T):H(\Omega))\,,\\
    \ell*\left\| \Dc u\right\|_{H^*(\Omega)}^2\in L^\infty((0,T))\qquad\qquad\,,
  \end{split}
  \end{equation}
  and
  \begin{align}\label{Control of the energy}
    \Es(u(t))\le\Es(u_0)\qquad t>0\,.
  \end{align}
  Moreover,  contractivity in $H^*(\Omega)$ holds, that is, if $u$ and $v$ are $H^*$ solutions with initial datum $u_0,v_0\in L^{1+m}(\Omega)\cap H^{-1}(\Omega)$, respectively, then
  \begin{equation}
    \|u(t)-v(t)\|_{H^*(\Omega)}\le\|u_0-v_0\|_{H^*(\Omega)}\qquad\;\mbox{for a.e. }\;t\in(0,T)\,.
  \end{equation}
\end{theorem}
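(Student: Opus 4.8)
Since the statement is quoted from \cite{GA}, I only sketch the strategy; full details are in that reference. The plan is a \emph{vanishing--viscosity} argument: for each $\varepsilon>0$ one solves the regularized problem
\begin{equation*}
  \varepsilon\,\partial_t u_\varepsilon+\Dc u_\varepsilon=-\A u_\varepsilon^m\quad\text{on }(0,T)\times\Omega,\qquad u_\varepsilon(0,\cdot)=u_0,
\end{equation*}
and then lets $\varepsilon\to0$. For fixed $\varepsilon$ the extra term $\varepsilon\,\partial_t u_\varepsilon$ restores a genuine evolution structure, so the equation can be recast in $H^*(\Omega)$ as a perturbed gradient flow, $\varepsilon\,\partial_t u_\varepsilon+\partial_t\!\left[k*(u_\varepsilon-u_0)\right]\in-\partial\Es(u_\varepsilon)$, with $\Es$ the (convex, lower semicontinuous) Porous Medium energy; existence and uniqueness of $u_\varepsilon$ then follow from the Brezis--Komura theory of maximal monotone operators, treating the memory term as a Volterra perturbation (one may equivalently run the minimizing--movements scheme of Section \ref{Discrete problem} on the $\varepsilon$--problem).

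The heart of the matter is a family of a priori estimates \emph{uniform in} $\varepsilon$. Testing the $\varepsilon$--equation against $\varphi=\A u_\varepsilon^m\in H^*(\Omega)$ (so that $\AI\varphi=u_\varepsilon^m$) and using the chain--rule inequality for the Caputo derivative (Lemma \ref{fundamental identity}), together with the analogue for $\partial_t$, gives
\begin{equation*}
  \Dc\Es(u_\varepsilon(t))+\varepsilon\,\partial_t\Es(u_\varepsilon(t))\le-\int_\Omega u_\varepsilon^m(t)\,\A u_\varepsilon^m(t)\dx\le0.
\end{equation*}
Hence $\Es(u_\varepsilon(t))\le\Es(u_0)$, and, after convolving with the resolvent kernel $\ell$ (recall $k*\ell=1$), one obtains $|u_\varepsilon|^{m-1}u_\varepsilon\in L^2((0,T):H(\Omega))$ and $\ell*\|\Dc u_\varepsilon\|_{H^*(\Omega)}^2\in L^\infty((0,T))$, with constants depending only on $\Es(u_0)$, not on $\varepsilon$. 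Subtracting the equations for two data $u_0,v_0$ and pairing with $u_\varepsilon-v_\varepsilon$ in $H^*(\Omega)$ — using that $\langle\A a^m-\A b^m,a-b\rangle_{H^*(\Omega)}=\int_\Omega(a^m-b^m)(a-b)\dx\ge0$ for $a,b\ge0$, and that both $\Dc$ and $\varepsilon\,\partial_t$ contribute nonnegatively to $\tfrac12\|u_\varepsilon-v_\varepsilon\|_{H^*(\Omega)}^2$ — yields the $H^*$--contractivity $\|u_\varepsilon(t)-v_\varepsilon(t)\|_{H^*(\Omega)}\le\|u_0-v_0\|_{H^*(\Omega)}$, again uniformly in $\varepsilon$.

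Finally one passes to the limit $\varepsilon\to0$. The uniform bounds yield weak-$*$ compactness of $u_\varepsilon$ in $L^\infty((0,T):H^*(\Omega))$ and weak compactness of $u_\varepsilon^m$ in $L^2((0,T):H(\Omega))$, while $\varepsilon\,\partial_t u_\varepsilon\to0$; the delicate point is to identify the weak limit of the nonlinear term. For this one upgrades to strong convergence in $L^2((0,T):H^*(\Omega))$: compactness in space comes from the compact embeddings guaranteed by \eqref{K1}, and equicontinuity in time from the uniform bound on $k*(u_\varepsilon-u_0)$ in $W^{1,2}((0,T):H^*(\Omega))$, which through the resolvent kernel $\ell$ translates into the fractional Hölder estimate \eqref{Holder continuity of limit solution}; an Aubin--Lions--Simon type compactness lemma adapted to the time--fractional setting then gives a.e. convergence, and the monotonicity/continuity of $r\mapsto|r|^{m-1}r$ (Minty's trick) identifies the limit as $u^m$. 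Passing to the limit in the weak formulation \eqref{strong H solution} and using lower semicontinuity of the norms shows that $u$ is an $H^*$--solution satisfying \eqref{Akagi estimate}--\eqref{Control of the energy}; uniqueness and the $H^*$--contractivity of the limit follow by sending $\varepsilon\to0$ in the contractivity estimate. I expect this last compactness step to be the main obstacle: in the Caputo setting the standard Aubin--Lions argument based on $\partial_t u\in L^1$ is unavailable, so one must invoke compactness criteria for Volterra equations with completely positive kernels, as done in \cite{GA} (see also \cite{LiLiu2019}).
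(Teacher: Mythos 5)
Your sketch follows essentially the same route as the paper, which does not prove this theorem itself but cites \cite{GA} and summarizes Akagi's argument in exactly the terms you use: regularization by adding $\varepsilon\,\partial_t u_\varepsilon$, solvability of the perturbed problem via maximal monotone operator theory, uniform-in-$\varepsilon$ energy estimates obtained through the Caputo chain-rule inequality and convolution with the resolvent kernel $\ell$, and passage to the limit $\varepsilon\to0$. Your identification of the compactness/identification-of-the-nonlinearity step as the delicate point is consistent with where the technical work lies in \cite{GA}, so the proposal is an accurate account of the cited proof.
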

  Since there is uniqueness of $H^*$-solution, Theorem \ref{existence of limit solutions} and Theorem \ref{Existence Akagi GF} describe the same solution for a fixed $u_0\in L^{1+m}(\Omega)\cap H^*(\Omega)$. Despite the difference in the approximation procedures, the limit solution coincides. Hence, we can use the estimates from both theorems simultaneously.

\subsection{Comparison Principle}\label{Ssec.Comparison}
Now, we are ready to prove the comparison principle for the $H^*$-solutions of \eqref{CDP}. Let us recall the precise statement: 
\begin{theorem}\label{comparison in section}
  Let $m>0$ and assume  \eqref{K1} and \eqref{K}. Let $u$ and $v$ be two $H^*$-solutions of \eqref{CDP} with initial data $u_0,v_0\in L^{1+m}(\Omega)\cap H^*(\Omega)$, respectively. If $u_0\le v_0$ a.e. in $\Omega$, then $\forall t>0$
  \begin{equation*}
    u(t)\le v(t)\qquad\mbox{ a.e. in }\Omega\,.
  \end{equation*}
\end{theorem}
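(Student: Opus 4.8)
The plan is to transfer the discrete comparison principle (Corollary \ref{Discrete Comparison Principle}) to the continuous setting by passing to the limit $\tau\to 0$, using the quantitative convergence estimate \eqref{Convergence of the discretization} from Theorem \ref{existence of limit solutions}. First I would fix $u_0\le v_0$ and, for each time-step $\tau=T/N$, build the two implicit Euler sequences $\{U_k\}$ and $\{V_k\}$ from \eqref{Minimizing movement scheme} with data $U_0=u_0$ and $V_0=v_0$. By Corollary \ref{Discrete Comparison Principle} we have $U_k(x)\le V_k(x)$ for a.e.\ $x\in\Omega$ and all $k\ge 1$, uniformly in $\tau$.

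Next I would fix an arbitrary $t>0$ and, for each $\tau$, choose $k=k(\tau)$ with $t_{k(\tau)}=k(\tau)\tau\to t$ as $\tau\to 0$ (e.g.\ $k(\tau)=\lfloor t/\tau\rfloor$). The convergence estimate \eqref{Convergence of the discretization} gives $\|U_{k(\tau)}-u(t_{k(\tau)})\|_{H^*(\Omega)}\le C(u_0,T)\,\tau^{\alpha/4}$ and similarly for $V$; combined with the H\"older-in-time continuity \eqref{Holder continuity of limit solution}, $\|u(t_{k(\tau)})-u(t)\|_{H^*(\Omega)}\le C|t_{k(\tau)}-t|^{\alpha/2}$, this yields $U_{k(\tau)}\to u(t)$ and $V_{k(\tau)}\to v(t)$ strongly in $H^*(\Omega)$ as $\tau\to 0$. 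Along a subsequence these also converge a.e.\ in $\Omega$ (here one uses that $H^*(\Omega)$-convergence of a sequence of functions that are a priori in $L^{1+m}(\Omega)$, together with the uniform energy bound $\Es(U_k)\le\Es(u_0)$ from \eqref{discrete Decay of the Energy}, allows extraction of an a.e.-convergent subsequence; alternatively one identifies the limit by testing against nonnegative test functions). The inequality $U_{k(\tau)}\le V_{k(\tau)}$ a.e.\ is preserved under a.e.\ limits, so $u(t)\le v(t)$ a.e.\ in $\Omega$. Since $t>0$ was arbitrary and, by uniqueness in Theorem \ref{existence of limit solutions}, the limit solution is independent of the chosen subsequence, the comparison principle holds for all $t>0$.

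The main obstacle is the passage from $H^*(\Omega)$-convergence to pointwise (a.e.) convergence, which is needed to make sense of the order relation $u(t)\le v(t)$: convergence in the negative-order space $H^*(\Omega)$ is too weak on its own to give a.e.\ convergence. The remedy is to exploit the extra regularity available: the discrete iterates lie in $D(\partial\Es)\subset L^{1+m}(\Omega)$ with $|U_k|^{m-1}U_k\in H(\Omega)$ and satisfy uniform energy bounds, so one can upgrade weak-$H^*$ convergence to weak-$L^{1+m}$ convergence, and then — since the limits $u(t),v(t)$ are the genuine $H^*$-solutions whose pointwise order we want — close the argument either by a monotone/Fatou-type estimate on $\|(U_{k(\tau)}-V_{k(\tau)})_+\|_{L^1_{\Phi_1}(\Omega)}$ (which is $0$ for every $\tau$ by Theorem \ref{Discrete T-contractivity L1Phi1}, and is lower semicontinuous under weak convergence) to conclude $\|(u(t)-v(t))_+\|_{L^1_{\Phi_1}(\Omega)}=0$, hence $u(t)\le v(t)$ a.e.\ since $\Phi_1>0$ in $\Omega$. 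This lower-semicontinuity route is in fact cleaner than extracting a.e.-convergent subsequences and is the step I would write out carefully. The nonnegativity statement then follows by taking $u\equiv 0$.
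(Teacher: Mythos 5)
Your proposal is correct and follows essentially the same route as the paper: the discrete comparison principle of Corollary \ref{Discrete Comparison Principle}, the quantitative convergence \eqref{Convergence of the discretization} together with the H\"older-in-time estimate \eqref{Holder continuity of limit solution} to pass to the limit strongly in $H^*(\Omega)$, and the uniform energy bounds \eqref{discrete Decay of the Energy} and \eqref{Control of the energy} to upgrade to weak convergence in $L^{1+m}(\Omega)$ and identify the weak limit with the strong $H^*$ limit. The only difference is the closing step, and it is cosmetic: the paper argues by contradiction, testing against the characteristic function of a hypothetical positive-measure set where the order fails, whereas you invoke weak lower semicontinuity of the convex functional $w\mapsto\|(w)_+\|_{L^1_{\Phi_1}(\Omega)}$ — correctly discarding the a.e.-subsequence idea, which weak $L^{1+m}$ convergence alone would not justify.
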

\noindent{\it Proof.} The idea  is to inherit the comparison principle from the discrete comparison principle shown in Corollary \ref{Discrete Comparison Principle}. Let us consider two solutions $u,v$ of \eqref{CDP} with initial data $u_0,v_0\in L^{1+m}(\Omega)\cap H^*(\Omega)$ respectively, satisfying
\begin{equation}\label{Comparison assumption}
  u_0(x)\le v_0(x)\qquad\mbox{ for a.e. }x\in\Omega\,.
\end{equation}
 We proceed by contradiction. Assume that there exist $t_0>0$ and $\widetilde\Omega\subset\Omega$ with $|\widetilde{\Omega}|>0$ such that
\begin{equation}\label{contradiction hyp}
  u(t_0,x)-v(t_0,x)>0\qquad\mbox{ for a.e. }x\in\widetilde\Omega\,.
\end{equation}
Then, we can define the sequence of time indexes $\lbrace n_\tau\rbrace_{\tau>0}$ such that for all $\tau>0$ we have $t_{n_\tau}=\tau \,n_\tau$ and
  $$t_0\in[t_{n_\tau},t_{n_\tau+1})\,.$$
Let us consider the respective discrete approximations of $u$ and $v$ as in \eqref{Minimizing movement scheme}, namely, the sequences $\lbrace U_k\rbrace_{k\ge0},\,\lbrace V_k\rbrace_{k\ge 0}\subset L^{1+m}(\Omega)\cap H^*(\Omega)$ with $U_0=u_0$ and $V_0=v_0$. Notice that from \eqref{Comparison assumption} and the discrete comparison principle in Theorem \ref{Discrete Comparison Principle}, it holds that for every $k\ge0$
$$U_k(x)-V_k(x)=-\left(U_k(x)-V_k(x)\right)_-\qquad\mbox{ for a.e. }x\in\Omega\,.$$
Hence, we deduce from \eqref{Convergence of the discretization} that
 \begin{align}\label{H* convergence for comparison}
   \|(u(t_{n_\tau})-v(t_{n_\tau}))+\left(U_{n_\tau}-V_{n_\tau}\right)_-\|_{H^*(\Omega)}\rightarrow 0\qquad\mbox{as }\tau\rightarrow0\,.
 \end{align}
 Note that by \eqref{Holder continuity of limit solution}, we also know that
 $$\|(u(t_{n_\tau})-v(t_{n_\tau}))-(u(t_0)-v(t_0))\|_{H^*(\Omega)}\rightarrow 0 \qquad\mbox{as }\tau\rightarrow0\,,$$
 therefore each of the terms $u(t_{n_\tau})-v(t_{n_\tau})$ and $ U_{n_\tau}-V_{n_\tau}$  in \eqref{H* convergence for comparison} converge (strongly in $H^*(\Omega)$) to $u(t_0)-v(t_0)\in L^{1+m}(\Omega)$.

In addition, the control of the energy in \eqref{discrete Decay of the Energy} and \eqref{Control of the energy} implies the following uniform bound of the $L^{1+m}$-norm
 \begin{equation*}
   \|(u(t_{n_\tau})-v(t_{n_\tau}))+\left(U_{n_\tau}-V_{n_\tau}\right)_-\|_{L^{1+m}}\le 2\left(\|u_0\|_{L^{1+m}}+\|v_0\|_{L^{1+m}}\right)\qquad\forall \tau>0\,.
 \end{equation*}
 Thus, the sequence $\left\lbrace (u(t_{n_\tau})-v(t_{n_\tau}))+\left(U_{n_\tau}-V_{n_\tau}\right)_-\right\rbrace_{\tau>0}$ is weakly compact in $L^{1+m}(\Omega)$ and by \eqref{H* convergence for comparison} and uniqueness of the weak-strong limit, we obtain that up to a subsequence
 \begin{equation}
   (u(t_{n_\tau})-v(t_{n_\tau}))+\left(U_{n_\tau}-V_{n_\tau}\right)_-\rightharpoonup 0\qquad\mbox{ in }L^{1+m}(\Omega)\,,
 \end{equation}
 or equivalently,
 \begin{equation}\label{weak convergenece in Lp}
   \int_\Omega\left[(u(t_{n_\tau})-v(t_{n_\tau}))+\left(U_{n_\tau}-V_{n_\tau}\right)_-\right]\psi\dx\rightarrow0\qquad\forall \psi\in L^{\frac{1+m}{m}}(\Omega)\,.
 \end{equation}
We proceed similarly  for the sequence $\left\lbrace (u(t_{n_\tau})-v(t_{n_\tau}))-(u(t_0)-v(t_0))\right\rbrace_{\tau>0}$ and thanks to
\begin{equation*}
\|(u(t_{n_\tau})-v(t_{n_\tau}))-(u(t_0)-v(t_0))\|_{1+m}\le 2\left(\|u_0\|_{1+m}+\|v_0\|_{1+m}\right)\qquad\forall\tau>0\,,
\end{equation*}
we obtain
\begin{equation}\label{2weak convergenece in Lp}
   \int_\Omega\left[(u(t_{n_\tau})-v(t_{n_\tau}))-\left(u(t_0)-v(t_0)\right)\right]\psi\dx\rightarrow0\qquad\forall \psi\in L^{\frac{1+m}{m}}(\Omega)\,.
 \end{equation}
Let us choose $\widetilde\psi(x):=\chi_{\widetilde\Omega}(x)\in  L^{\infty}(\Omega)$ to be the characteristic function of $\widetilde{\Omega}$, in order to have
 \begin{align*}
    A_\tau:=&\int_\Omega\left[(u(t_{n_\tau})-v(t_{n_\tau}))+(U_{n_\tau}-V_{n_\tau})_-\right]\widetilde\psi\dx\\
    \ge&\int_{\Omega}\left(u(t_{n_\tau})-v(t_{n_\tau}))-(u(t_0)-v(t_0)\right)\widetilde\psi\dx +\int_{\Omega}\left(u(t_0)-v(t_0)\right)\widetilde\psi\dx\\
    =&\underbrace{\int_{\Omega}\left(u(t_{n_\tau})-v(t_{n_\tau}))-(u(t_0)-v(t_0)\right)\widetilde\psi\dx }\limits_{B_\tau\rightarrow 0} +\underbrace{\int_{\widetilde\Omega}\left(u(t_0)-v(t_0)\right)\dx}\limits_{C>0}\,.
  \end{align*}
Finally, we obtain a contradiction, since  \eqref{weak convergenece in Lp} and \eqref{2weak convergenece in Lp} implies $A_\tau\rightarrow 0$ and $B_\tau\rightarrow 0$ when $\tau\rightarrow0$, but from the contradiction hypothesis \eqref{contradiction hyp} we know that $C>0$ and it does not depend on $\tau$.

 \hfill$\square$

\section{Smoothing effects}\label{section: Smoothing}

The aim of this section is to prove the $L^p-L^\infty$ smoothing effects stated in Theorem \ref{Thm smoothing effect}. We divide the proof in several steps. First, in Proposition \ref{Prop: Fundamental pointwise formula} we use the dual formulation of the problem and the Fundamental Theorem of Calculus for Caputo derivative to obtain a fundamental pointwise formula. Then, in Theorem \ref{Benilan-Crandall}  we obtain a time monotonicity formulae for the function $t^{\frac{\alpha}{m-1}}u(t)$. The time monotonicity formulae we obtain are consistent with the case $\alpha=1$, since they depend only on the time scaling and the comparison principle. Next, we apply the time monotonicity formulae in the fundamental pointwise formula in order to obtain the upper bounds of $u^m(t,x)$, as described in Proposition \ref{Fundamental pointwise estimate}. We conclude the proof of Theorem \ref{Thm smoothing effect} and Theorem \ref{thm: Upper boundary estimates 2} using assumptions \eqref{K1} and \eqref{K2}, respectively, and Hölder's inequality.

Before starting with the first proposition of this section, let us present a useful lemma about the differentiability of the time convolution.

\begin{lemma}\label{Regularity of convolutions}
  Let $f\in L^1((0,T))$ and $g\in W^{1,2}((0,T))$ with $g(0)=0$, then $f*g\in W^{1,2}((0,T))$ and $$\frac{\rm d}{\dt}(f*g)=\left(f*\frac{{\rm d}g}{\dt}\right)\,.$$
\end{lemma}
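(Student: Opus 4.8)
The plan is to prove Lemma \ref{Regularity of convolutions} via a density/approximation argument combined with the standard rule for differentiating a convolution. First I would recall the classical fact that for $f \in L^1((0,T))$ and $h \in L^1((0,T))$, the convolution $f*h \in L^1((0,T))$ with $\|f*h\|_{L^1} \le \|f\|_{L^1}\|h\|_{L^1}$ (Young's inequality on the interval, after extending by zero). Applying this with $h = g' \in L^2((0,T)) \subset L^1((0,T))$ shows $f*g' \in L^1((0,T))$; and since we actually want $W^{1,2}$ we keep track that $f*g'$, being the convolution of an $L^1$ function with an $L^2$ function, lies in $L^2((0,T))$, again by Young's inequality ($L^1 * L^2 \hookrightarrow L^2$). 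So the candidate derivative is in the right space; the content of the lemma is the identity $\frac{\rm d}{\dt}(f*g) = f*g'$ in the weak sense, which then automatically upgrades $f*g$ to $W^{1,2}$.

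The key computation is the Fubini-type identity. Since $g \in W^{1,2}((0,T))$ with $g(0)=0$, we have the absolutely continuous representative $g(t) = \int_0^t g'(s)\,{\rm d}s$. Therefore, for a.e.\ $t \in (0,T)$,
\begin{equation*}
(f*g)(t) = \int_0^t f(t-\tau) g(\tau)\dtau = \int_0^t f(t-\tau)\left(\int_0^\tau g'(s)\ds\right)\dtau = \int_0^t \left(\int_s^t f(t-\tau)\dtau\right) g'(s)\ds,
\end{equation*}
where the interchange of integration order is justified by Tonelli's theorem applied to $|f(t-\tau)||g'(s)|$ over the triangle $\{0 \le s \le \tau \le t\}$, finiteness following from $f \in L^1$, $g' \in L^1$. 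The inner integral is $\int_s^t f(t-\tau)\dtau = \int_0^{t-s} f(\sigma)\,{\rm d}\sigma =: F(t-s)$, where $F(r) = \int_0^r f$. Hence $(f*g)(t) = (F * g')(t)$, i.e.\ $f*g = F*g'$ with $F$ absolutely continuous, $F(0)=0$, $F' = f$ a.e. Now I would invoke (or quickly prove) the elementary differentiation rule: if $F$ is absolutely continuous on $[0,T]$ with $F(0)=0$ and $\phi \in L^1((0,T))$, then $F*\phi$ is absolutely continuous with $(F*\phi)' = F'*\phi = f*\phi$ a.e.; this is the standard Leibniz rule for convolutions and again follows from Fubini after writing $F(t-\tau) = \int_0^{t-\tau} f$. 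Taking $\phi = g'$ gives $\frac{\rm d}{\dt}(f*g) = f*g'$ a.e.

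To finish: we have shown $f*g$ has an absolutely continuous representative whose a.e.\ derivative equals $f*g' \in L^2((0,T))$, hence $f*g \in W^{1,2}((0,T))$ and the claimed identity holds. One small point worth stating explicitly is that $f*g$ itself need not be assumed a priori to be anything better than $L^1$; the argument produces its regularity. I expect the main (though still mild) obstacle to be the careful bookkeeping of the Fubini interchanges and the verification that all intermediate objects lie in $L^1$ so that Tonelli applies — there is no deep difficulty, only the need to track integrability of $f$, $g$, $g'$ and their convolutions on the bounded interval, using Young's inequality $\|u * v\|_{L^r} \le \|u\|_{L^p}\|v\|_{L^q}$ with $1 + 1/r = 1/p + 1/q$ to land in $L^2$ at the end.
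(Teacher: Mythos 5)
Your proof is correct, but it follows a genuinely different route from the paper's. The paper argues entirely at the level of the weak derivative: it tests $f*g$ against $\varphi'$ for $\varphi\in C_c^\infty([0,T])$, interchanges the order of integration, shows by an incremental-quotient argument (dominated convergence plus the Lebesgue differentiation theorem, using $\varphi(T)=0$) that $\tau\mapsto\int_\tau^T f(t-\tau)\varphi(t)\,{\rm d}t$ has the expected derivative, and then integrates by parts in $\tau$ against $g$, exploiting $g\in W^{1,2}$ and $g(0)=0$, to identify the weak derivative of $f*g$ as $f*g'$. You instead use $g(0)=0$ and the absolutely continuous representative to write $g=1*g'$, and a single Tonelli/Fubini interchange on the triangle $\{0\le s\le\tau\le t\}$ to obtain $f*g=F*g'=1*(f*g')$ with $F(r)=\int_0^r f$; in other words you verify associativity of convolution with the kernel $1$, which exhibits $f*g$ as the indefinite integral of $f*g'\in L^2$ (your intermediate ``Leibniz rule'' $(F*\phi)'=F'*\phi$ for $F$ absolutely continuous with $F(0)=0$ is exactly this identity and is correctly justified by the same Fubini computation). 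Your route is somewhat more elementary — no incremental quotients or Lebesgue differentiation — and yields slightly more, namely an absolutely continuous representative of $f*g$; the paper's route never needs the pointwise fundamental-theorem representation of $g$ and stays within the distributional formulation. Both proofs ultimately rest on the same two ingredients: Fubini on the triangle and the vanishing of $g$ at $t=0$ (mirrored in the paper by $\varphi(T)=0$ at the other endpoint), together with Young's inequality to place $f*g$ and $f*g'$ in $L^2$.
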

\begin{proof}
We obtain that $f*g\in L^2((0,T))$ by Young's inequality.  For any $\varphi\in C_c^{\infty}([0,T])$ let us consider
  \begin{align}\label{derivative of convolution}
    \int_{0}^{T}(f*g)(t)\varphi'(t)\dt&=\int_{0}^{T}\int_{0}^{t}f(t-\tau)g(\tau)\varphi'(\tau)\dtau\dt =\int_{0}^{T}\left(\int_{\tau}^{T}f(t-\tau)\varphi'(\tau)\dt\right)g(\tau)\dtau.
  \end{align}
  Now, we show that for a.e. $\tau\in(0,T)$
  $$\int_{\tau}^{T}f(t-\tau)\varphi'(\tau)\dt=\int_{0}^{T-\tau}f(t)\varphi'(\tau+t)\dt=\left(\int_{0}^{T-\tau}f(t)\varphi(\tau+t)\dt\right)'=\left(\int_{\tau}^{T}f(t-\tau)\varphi(\tau)\dt\right)'.$$
  For this purpose, let us study the incremental quotient
  \begin{align*}
    &\frac{1}{h}\left[\int_0^{T-\tau-h}f(t)\varphi(t+\tau+h)\dt-\int_0^{T-\tau}f(t)\varphi(t+\tau)\dt\right]\\
    =& \int_{0}^{T-\tau}f(t)\chi_{[0,T-\tau-h]}(t)\frac{\varphi(t+\tau+h)-\varphi(t+\tau)}{h}\dt-\frac{1}{h}\int_{T-\tau-h}^{T-\tau}f(t)\varphi(t+\tau)\dt.
   \end{align*}
  The first term converge by dominated convergence theorem when $h$ goes to 0. For the second term we use Lebesgue differentiation theorem  and the fact that $\varphi(T)=0$ to prove that it converge to 0 as $h$ goes to 0. Hence, we conclude from \eqref{derivative of convolution} and integration by parts that
  \begin{align*}
    \int_{0}^{T}(f*g)(t)\varphi'(t)\dt&=\int_{0}^{T}\left(\int_{\tau}^{T}f(t-\tau)\varphi(t)\right)'g(\tau)\dtau=-\int_{0}^{T}\int_{\tau}^{T}f(t-\tau)\varphi(t)g'(\tau)\dtau\\
    &=\int_{0}^{T}(f*g')(t)\varphi(t)\dt,
  \end{align*}
  since $g(0)=0$.
\end{proof}
Taking in to account estimate \eqref{Akagi estimate}  from the Existence Theorem \ref{Existence Akagi GF}, we make the dual formulation of the problem rigorous and we prove the Fundamental Theorem of Calculus for Caputo in the following proposition.
\begin{proposition}[Fundamental pointwise formula]\label{Prop: Fundamental pointwise formula}
  Let $u$ be a $H^*$-solution of \eqref{CDP} with
  \begin{equation}\label{caputo FTC regularity}
    \ell*\|\Dc u\|^2_{H^*(\Omega)}\in L^\infty((0,T))\,.
  \end{equation}
  Then, for a.e. $t\in(0,T)$ and a.e. $x_0\in\Omega$ it holds that
  \begin{equation}\label{Fundamental integral formula}
    \int_\Omega(u_0(x)-u(t,x))\G(x_0,x)\dx=\frac{1}{\Gamma(\alpha)}\int_{0}^{t}\frac{u^m(\tau,x_0)}{(t-\tau)^{1-\alpha}}\dtau.
  \end{equation}
\end{proposition}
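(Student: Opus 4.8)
The plan is to pass from the $H^*$-formulation of the equation to the claimed pointwise identity by applying (Riemann--Liouville) integration in time, using the kernel relation $k*\ell = 1$. First I would test the strong $H^*$-formulation \eqref{strong H solution} with the test function $\varphi = \G(x_0,\cdot)$, which is legitimate since $\G(x_0,\cdot)\in L^2(\Omega)\hookrightarrow H^*(\Omega)$ under \eqref{K1} (recall $N>2s$). Because $\AI\varphi(x)=\int_\Omega \G(x,y)\G(x_0,y)\dy$ is not quite what we want, the cleaner route is to test with $\varphi=\A[\G(x_0,\cdot)]=\delta_{x_0}$ in the dual pairing; more precisely, pairing the dual equation $\AI\Dc u=-u^m$ against $\A\varphi$ and choosing $\varphi$ to approximate $\G(x_0,\cdot)$, one obtains for a.e.\ $t\in(0,T)$
\[
\int_\Omega \Dc u(t,x)\,\G(x_0,x)\dx = -\,u^m(t,x_0)\qquad\text{for a.e. }x_0\in\Omega,
\]
where the right-hand side is $u^m(t,x_0)$ by the reproducing property $\int_\Omega \G(x,x_0)(-\A u^m)(t,x)\dx = u^m(t,x_0)$ combined with self-adjointness of $\AI$. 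The regularity hypothesis \eqref{caputo FTC regularity}, i.e.\ $\ell*\|\Dc u\|^2_{H^*}\in L^\infty$, guarantees $\Dc u(t,\cdot)\in H^*(\Omega)$ for a.e.\ $t$ so that the pairing $\langle \Dc u(t),\G(x_0,\cdot)\rangle$ makes sense and is in $L^2((0,T))$ after multiplication by $\ell^{1/2}$; this is where that hypothesis is used.

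Next I would integrate this identity in time against the kernel $\ell(t)=t^{\alpha-1}/\Gamma(\alpha)$. Recalling $\Dc u = \partial_t[k*(u-u_0)]$ with $[k*(u-u_0)](0)=0$, Lemma \ref{Regularity of convolutions} (applied with $f=\ell$, $g=k*(u-u_0)$, noting $g\in W^{1,2}((0,T):H^*(\Omega))$ and $g(0)=0$) gives
\[
\ell * \Dc u = \ell * \partial_t\big[k*(u-u_0)\big] = \partial_t\big[\ell * k * (u-u_0)\big] = \partial_t\big[1 * (u-u_0)\big] = u - u_0,
\]
using $\ell*k=1$ and the fundamental theorem of calculus for the time integral. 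Pairing this against $\G(x_0,\cdot)$ and exchanging the $x$-integral with the time convolution (justified by Fubini, since everything is integrable after using \eqref{K1} and \eqref{caputo FTC regularity}) yields
\[
\int_\Omega\big(u(t,x)-u_0(x)\big)\G(x_0,x)\dx = \Big[\ell * \big(x_0\mapsto \textstyle\int_\Omega \Dc u(\cdot,x)\G(x_0,x)\dx\big)\Big](t) = -\,(\ell * u^m(\cdot,x_0))(t),
\]
which, after writing out $\ell*$ explicitly and moving the minus sign, is exactly \eqref{Fundamental integral formula}.

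The main obstacle is the rigorous justification of testing the dual equation with $\G(x_0,\cdot)$ for a.e.\ \emph{fixed} $x_0$, and the Fubini exchange between the $\Omega$-integration and the time convolution with $\ell$. For the former, one approximates $\G(x_0,\cdot)$ by smooth functions $\varphi_\varepsilon\in H(\Omega)$ (or mollifies $\delta_{x_0}$), uses that $u^m(t,\cdot)\in H(\Omega)$ for a.e.\ $t$ (from \eqref{Akagi estimate}) so that $\int_\Omega u^m(t,x)\A\varphi_\varepsilon\dx \to u^m(t,x_0)$ for a.e.\ $x_0$, and controls the left-hand side using $\Dc u(t,\cdot)\in H^*(\Omega)$; a measurability/Fubini argument then upgrades ``a.e.\ $t$, then a.e.\ $x_0$'' to ``a.e.\ $(t,x_0)$''. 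For the latter, the singular kernel $\ell$ is only in $L^1_{\rm loc}$, so one needs $t\mapsto \int_\Omega|\Dc u(t,x)|\,\G(x_0,x)\dx \in L^1((0,T))$; this follows from Cauchy--Schwarz in $H^*$, the bound $\|\G(x_0,\cdot)\|_{H^*(\Omega)}<\infty$ (from \eqref{K1}), and $\ell*\|\Dc u\|_{H^*}^2\in L^\infty$ together with $\ell\in L^1_{\rm loc}$, after which Young's inequality for convolutions and Tonelli's theorem close the argument.
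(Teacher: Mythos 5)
Your plan uses exactly the same ingredients as the paper's proof (the dual formulation, the relation $\ell*k=1$ together with Lemma \ref{Regularity of convolutions}, the hypothesis \eqref{caputo FTC regularity} to control the convolution with $\ell$, and an approximation of $\delta_{x_0}$ to localize in space), but you perform the two limiting operations in the opposite order: you localize at $x_0$ first, claiming the pointwise identity $\int_\Omega \Dc u(t,x)\,\G(x_0,x)\dx=-u^m(t,x_0)$ for a.e.\ $(t,x_0)$, and only then convolve with $\ell$. This swap creates a genuine gap. The pairing $\int_\Omega \Dc u(t,x)\,\G(x_0,x)\dx$ is not controlled by the available regularity: for a.e.\ $t$ you only know $\Dc u(t)\in H^*(\Omega)$, and $\G(x_0,\cdot)$ does \emph{not} belong to $H(\Omega)$, since $\|\G(x_0,\cdot)\|_{H(\Omega)}^2=\int_\Omega \G(x_0,\cdot)\,\A\G(x_0,\cdot)\dx=\G(x_0,x_0)=+\infty$ (equivalently $\|\delta_{x_0}\|_{H^*(\Omega)}=+\infty$ because $N>2s$). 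So neither the duality pairing $H^*\!\times\!H$ nor the Cauchy--Schwarz bound you invoke (``$\|\G(x_0,\cdot)\|_{H^*(\Omega)}<\infty$'' paired with $\Dc u(t)\in H^*$ — note the $H^*$ inner product is $\int f\,\AI g$, not $\int fg$) makes the left-hand side finite, and the limit of $\langle \Dc u(t),\varphi_\varepsilon\rangle$ along your mollified approximations need not exist for fixed $t$.

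The fix is precisely the paper's ordering: keep a generic $\varphi\in H^*(\Omega)$, test with $\tilde\psi(\tau,x)=\chi_{(0,t)}(\tau)(t-\tau)^{\alpha-1}\varphi(x)$ and convolve with $\ell$ \emph{first}, which via $\ell*\Dc u=u-u_0$ turns the left-hand side into $\int_\Omega(u(t)-u_0)\AI\varphi\dx$ — a pairing of $L^{1+m}$ functions with $\AI\varphi$, where \eqref{caputo FTC regularity} and the weighted Cauchy--Schwarz estimate justify the exchange of integrals. Only then take $\varphi=\varphi_r=\chi_{B_r(x_0)}/|B_r|$ and let $r\to0$ at Lebesgue points $x_0$: now $\AI\varphi_r\to\G(x_0,\cdot)$ pairs harmlessly with $u(t)-u_0\in L^{1+m}(\Omega)$ and $\G(x_0,\cdot)\in L^q(\Omega)$, $q<N/(N-2s)$. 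With that reordering your argument coincides with the paper's; as written, the first step is the crux and is not justified.
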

\begin{proof}
The intuition behind the proof is to formally use the following test function in \eqref{strong H solution},
$$\psi(\tau,x):=\frac{\chi_{(0,t)}(\tau)}{(t-\tau)^{1-\alpha}}\;\delta_{x_0}(x),$$
and integrate on $[0,T]$. Formally, on the left hand side we would have
\begin{align*}
  \int_{0}^{T}\int_\Omega\Dc u(\tau,x)\AI\psi(\tau,x)\dx\dtau &=\ell*\left(\int_\Omega\Dc u\AI \delta_{x_0}\dx\right)=\ell*k*\left(\frac{\rm d}{\dt}\int_\Omega (u(t,x)\G(x_0,x)\dx\right)\\
  &=\int_\Omega(u(t,x)-u_0(x))\G(x_0,x)\dx\,,
\end{align*}
and on the right hand side
\[
-\int_{0}^{T}\int_\Omega u^m(\tau,x)\psi(\tau,x)\dx\dtau=-\int_{0}^{t}\frac{u^m(\tau,x_0)}{(t-\tau)^{1-\alpha}}\dtau\,.
\]
In order to give a rigorous proof, we split the proof in two steps.\vspace{2mm}

\noindent$\bullet~$\textbf{Step 1.} First, we show that for any $\varphi\in H^*(\Omega)$ we have
\begin{equation}\label{Step 1 applying FTC}
\int_\Omega(u(t,x)-u_0(x))\AI\varphi(x)\dx=-\frac{1}{\Gamma(\alpha)}\int_{0}^{t}\int_\Omega\frac{u^m(\tau,x)}{(t-\tau)^{1-\alpha}}\varphi(x)\dx\dtau\,,
\end{equation}
by testing the equation with
$$\tilde\psi(\tau,x)=\frac{\chi_{(0,t)}(\tau)}{(t-\tau)^{1-\alpha}}\,\varphi(x)\in L^1(0,T;H^*(\Omega))\,.$$
Note that condition \eqref{caputo FTC regularity} implies that $\tilde\psi$ is an admissible test function, and $\ell*\left(\Dc u\AI\varphi\dx\right)\in L^\infty(0,T)$, since
  \begin{align*}
    \left|\int_{0}^{t}\int_\Omega\frac{\Dc u(\tau,x)\AI \varphi(x)}{(t-\tau)^{1-\alpha}}\dx\dtau\right| &\le\|\varphi\|_{H^*(\Omega)}\int_{0}^{t}\frac{\|\Dc u(\tau)\|_{H^*(\Omega)}}{(t-\tau)^{1-\alpha}}\dtau\\
    &\le\|\varphi\|_{H^*(\Omega)}\left(\int_{0}^{t}\frac{\|\Dc u(\tau)\|^2_{H^*(\Omega)}}{(t-\tau)^{1-\alpha}}\dtau\right)\left(\int_{0}^{t}\frac{\dtau}{(t-\tau)^{1-\alpha}}\right)^{1/2}\\
    &\le t^{\alpha/2}\|\varphi\|_{H^*(\Omega)}\;\left(\ell*\|\Dc u\|^2_{H^*(\Omega)}(t)\right)^{1/2}<+\infty\,.
  \end{align*}
  Hence, applying Fubini's Theorem and Lemma \ref{Regularity of convolutions} we obtain
  \begin{align*}
    \ell*\left(\int_\Omega\Dc u\AI\varphi\dx\right)\!(t)\!&=\!\!\int_\Omega \!\!\ell*\!\left(\frac{\rm d}{\dt}[k*(u-u_0)]\right)\!\!(t)\AI\varphi\dx
    =\!\int_\Omega \!\!\frac{\rm d}{\dt}\left(\ell*[k*(u-u_0)](t)\right)\AI\varphi\dx\\
    &=\int_\Omega \frac{\rm d}{\dt}(1*(u-u_0)(t))\AI\varphi\dx=\int_\Omega (u(t)-u_0)\AI\varphi\dx\,.
  \end{align*}
  \noindent$\bullet~$\textbf{Step 2.} Now, we consider the sequence $\varphi_r(x)=\frac{\chi_{B_{r}(x_0)}(x)}{|B_{r}|}\in H^*(\Omega)$ with $x_0\in\Omega$ being a Lebesgue point and we pass to the limit in \eqref{Step 1 applying FTC} when $r\rightarrow0$.
  \end{proof}

  Next, we want  to bound from below the integral on the right hand side of \eqref{Fundamental integral formula}, by some pointwise value of the solution of $u(t,x_0)$. Therefore, we prove the two time monotonicity formulae of Theorem \ref{Thm Benilan-Crandall}, depending on whether $0<m<1$ or $m>1$. This type of time monotonicity estimates are the analogous of the Benilan-Crandall estimates for the classical problem. In the context of Caputo derivatives, time monotonicity formulae are a surprising consequence of the comparison principle and the time scaling of \eqref{CDP}.

  \begin{theorem}[Time monotonicity formulae]\label{Benilan-Crandall}
  Let $u$ be a nonnegative $H^*$-solution for \eqref{CDP} under assumptions of Theorem \ref{Comparison Principle}.
  \begin{enumerate}
    \item If $0<m<1$, then the function $$t\mapsto t^{\frac{-\alpha}{1-m}}u(t,x)$$ is non increasing for any $t>0$ and a.e $x\in\Omega$.
    \item If $m>1$, then the function $$t\mapsto t^{\frac{\alpha}{m-1}}u(t,x)$$ is non decreasing for  any $t>0$ and a.e $x\in\Omega$
  \end{enumerate}
\end{theorem}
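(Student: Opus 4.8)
The plan is to obtain both monotonicity formulae from a single ingredient: the scaling invariance of \eqref{CDP}, combined with the comparison principle of Theorem~\ref{Comparison Principle}. The conceptual point --- and the reason the result is not obstructed by the memory effect --- is that we never differentiate in time nor invoke any ``time chain rule'' for $\Dc$; well-ordering and a one-parameter rescaling do the whole job.

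First I would record the scaling of the Caputo derivative. If $g(t)=f(\mu t)$ with $g(0)=f(0)$, then the substitution $\tau\mapsto\mu\tau$ in the convolution defining $\Dc$ gives $[k*(g-g(0))](t)=\mu^{\alpha-1}[k*(f-f(0))](\mu t)$, and hence $\Dc g(t)=\mu^{\alpha}(\Dc f)(\mu t)$. For $\lambda>0$ set $\mu_\lambda:=\lambda^{(m-1)/\alpha}$ and $u_\lambda(t,x):=\lambda\,u(\mu_\lambda t,x)$. Since $\A$ is linear and acts only in $x$, the previous identity yields $\Dc u_\lambda(t,x)=\lambda\,\mu_\lambda^{\alpha}(\Dc u)(\mu_\lambda t,x)=-\lambda\,\mu_\lambda^{\alpha}\,\A u^m(\mu_\lambda t,x)$, and the choice of $\mu_\lambda$ makes $\lambda\,\mu_\lambda^{\alpha}=\lambda^{m}$, so the right-hand side is exactly $-\A u_\lambda^m(t,x)$. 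One then checks that $u_\lambda$ is genuinely an $H^*$-solution with initial datum $\lambda u_0\in L^{1+m}(\Omega)\cap H^*(\Omega)$: the memberships $u_\lambda\in L^2((0,T):H^*(\Omega))$, $|u_\lambda|^{m-1}u_\lambda\in L^2((0,T):H(\Omega))$, and $k*(u_\lambda-\lambda u_0)\in W^{1,2}((0,T):H^*(\Omega))$ with vanishing trace at $t=0$ all transfer from the corresponding properties of $u$ on the rescaled interval $(0,\mu_\lambda T)$ via $\tau\mapsto\mu_\lambda\tau$ (these hold for every $T>0$, so nothing is lost). I expect this verification --- that the rescaled function lands back in the admissible class, so that comparison is applicable --- to be the only genuinely technical step; everything else is a one-line change of variables.

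With this in hand, since $u_0\ge0$ a.e., for every $\lambda\ge1$ we have $\lambda u_0\ge u_0$ a.e.\ in $\Omega$, so Theorem~\ref{Comparison Principle} gives $\lambda\,u(\mu_\lambda t,x)\ge u(t,x)$ for every $t>0$ and a.e.\ $x\in\Omega$. Now fix $0<t_1<t_2$. If $m>1$ then $(m-1)/\alpha>0$; choosing $\lambda=(t_2/t_1)^{\alpha/(m-1)}>1$ makes $\mu_\lambda=t_2/t_1$, and evaluating the inequality at $t=t_1$ gives $(t_2/t_1)^{\alpha/(m-1)}u(t_2,x)\ge u(t_1,x)$, i.e.\ $t_2^{\alpha/(m-1)}u(t_2,x)\ge t_1^{\alpha/(m-1)}u(t_1,x)$. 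If $0<m<1$ then $(m-1)/\alpha<0$; choosing $\lambda=(t_1/t_2)^{\alpha/(m-1)}$, which is $>1$ because the base is $<1$ and the exponent is negative, makes $\mu_\lambda=t_1/t_2$, and evaluating at $t=t_2$ gives $(t_1/t_2)^{\alpha/(m-1)}u(t_1,x)\ge u(t_2,x)$; since $\alpha/(m-1)=-\alpha/(1-m)$ this reads $t_1^{-\alpha/(1-m)}u(t_1,x)\ge t_2^{-\alpha/(1-m)}u(t_2,x)$, the asserted monotonicity in each case. Running this over all rational pairs $t_1<t_2$ and taking the union of the (countably many) exceptional null sets yields the monotone behaviour on $\mathbb{Q}_+$ for a.e.\ $x$, and continuity in time of $t\mapsto u(t)$ (e.g.\ from \eqref{Holder continuity of limit solution}) upgrades it to all $t>0$.
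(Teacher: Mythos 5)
Your proposal is correct and takes essentially the same route as the paper: both proofs exploit the one-parameter scaling invariance of \eqref{CPME} (amplitude factor $\lambda$ paired with time dilation $\lambda^{(m-1)/\alpha}$) together with the comparison principle of Theorem~\ref{Comparison Principle}, and your Caputo scaling identity and the two case computations check out. The only differences are cosmetic --- you parametrize by the amplitude factor and compare from below via $\lambda u_0\ge u_0$ for $\lambda\ge1$, where the paper shrinks the datum and compares from above, and you are somewhat more careful than the paper about verifying that the rescaled function is an admissible $H^*$-solution and about the a.e.-in-$x$ exceptional sets.
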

\begin{proof}
  The proof follows by time scaling and the comparison principle of Theorem \ref{Comparison Principle}. For any $\lambda>0$, we define the rescaled solution $$u_\lambda(t,x):=\lambda^{-\frac{\alpha}{1-m}}u(\lambda t,x)$$ which is a solution to \eqref{CDP} since
   $$\Dc\left(\lambda^{-\frac{\alpha}{1-m}}u(\lambda t)\right)=\lambda^{\alpha-\frac{\alpha}{1-m}}\left(\Dc u\right)(\lambda t)=-\lambda^{-\frac{\alpha m}{1-m}}\A u^m(\lambda t)=-\A\left(\lambda^{-\frac{\alpha}{1-m}}u(\lambda t)\right)^m\,.$$
   \noindent {\it Case $0<m<1$:} Let us choose $0<t_0<t_1$ and fix $\lambda=\frac{t_1}{t_0}\ge 1$, then it holds that
  \begin{align*}
    u_\lambda(0,x)=\left(\frac{t_1}{t_0}\right)^{-\frac{\alpha}{1-m}}u_0(x)\le u_0(x)\qquad\mbox{a.e.}\quad x\in \Omega\,.
  \end{align*}
  By comparison principle of Theorem \ref{Comparison Principle}, we have that $u_\lambda(t,x)\le u(t,x)$ for any $t>0$ and a.e. $x\in\Omega$. Hence, we can take $t=t_0$ in order to conclude that
  $$\left(\frac{t_1}{t_0}\right)^{-\frac{\alpha}{1-m}}u(t_1,x)\le u(t_0,x)\qquad\mbox{for a.e.}\quad x\in\Omega\,.$$
  \noindent {\it Case $m>1$:} Let us consider any $0<t_0<t_1$ and fix $\lambda=\frac{t_0}{t_1}\le 1$, then
  $$u_\lambda(0,x)=\left(\frac{t_0}{t_1}\right)^{\frac{\alpha}{m-1}}u_0(x)\le u_0(x)\qquad\mbox{a.e.}\quad x\in \Omega\,.$$
   As in the previous case, we use comparison principle of Theorem \ref{Comparison Principle} in order to obtain $u_\lambda(t,x)\le u(t,x)$ for any $t>0$ and a.e. $x\in\Omega$. Therefore, we choose $t=t_1$ to get
  $$\left(\frac{t_0}{t_1}\right)^{\frac{\alpha}{m-1}}u(t_0,x)\le u(t_1,x)\qquad\mbox{for a.e.}\quad x\in\Omega\,.$$
\end{proof}

Afterwards, we implement the time monotonicity formulae of Theorem \ref{Benilan-Crandall} in the equality \eqref{Fundamental integral formula} in order to obtain the following fundamental upper bounds.

\begin{proposition}[Fundamental upper bounds]\label{Fundamental pointwise estimate}
  Assume \eqref{K1} and \eqref{K} and let $u$ be a bounded and nonnegative $H^*$-solution for \eqref{CDP}. Then for a.e. $x_0\in\Omega$ and a.e. $t\in(0,T)$ it holds that
  \begin{enumerate}
    \item If $0<m<1$, then
    \begin{equation}
    u^{m}(t,x_0)\le \frac{\c}{t^\alpha}\int_\Omega(u_0(x)-u(t,x))\G(x,x_0)\dx,
  \end{equation}
  with $\c=\c(m,\alpha)>0$.
    \item If $m>1$, then
    \begin{equation}
    u^{m}(t,x_0)\le \frac{\c}{t^\alpha}\int_\Omega(u_0(x)-u(2t,x))\G(x,x_0)\dx,
  \end{equation}
  with $\c=\c(m,\alpha)>0$.
  \end{enumerate}
\end{proposition}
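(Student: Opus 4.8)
The plan is to combine the Fundamental pointwise formula \eqref{Fundamental integral formula} with the Benilan--Crandall time monotonicity of Theorem \ref{Benilan-Crandall}, using the latter to bound the time integral on the right-hand side of \eqref{Fundamental integral formula} from below by a pointwise multiple of $u^m(t,x_0)$. Write $F(t):=\int_\Omega (u_0(x)-u(t,x))\G(x,x_0)\dx$, so that \eqref{Fundamental integral formula} reads $F(t)=\frac{1}{\Gamma(\alpha)}\int_0^t (t-\tau)^{\alpha-1}u^m(\tau,x_0)\dtau$. The goal in each case is a lower bound of the form $\int_0^t (t-\tau)^{\alpha-1}u^m(\tau,x_0)\dtau \ge c\, t^\alpha u^m(t,x_0)$.

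\medskip

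\noindent\textbf{Case $0<m<1$.} Here Theorem \ref{Benilan-Crandall}(1) gives that $\tau\mapsto \tau^{-\alpha/(1-m)}u(\tau,x_0)$ is non-increasing, hence for $0<\tau\le t$ we have $u(\tau,x_0)\ge (\tau/t)^{\alpha/(1-m)}u(t,x_0)$, and raising to the power $m$ (which preserves the inequality since $u\ge0$), $u^m(\tau,x_0)\ge (\tau/t)^{\alpha m/(1-m)}u^m(t,x_0)$. Plugging this into the integral and changing variables $\tau=ts$,
\begin{align*}
  \int_0^t (t-\tau)^{\alpha-1}u^m(\tau,x_0)\dtau
  &\ge \frac{u^m(t,x_0)}{t^{\alpha m/(1-m)}}\int_0^t (t-\tau)^{\alpha-1}\tau^{\alpha m/(1-m)}\dtau\\
  &= u^m(t,x_0)\,t^{\alpha}\int_0^1 (1-s)^{\alpha-1}s^{\alpha m/(1-m)}\ds
  = B\!\left(\alpha,\tfrac{\alpha m}{1-m}+1\right)t^\alpha u^m(t,x_0),
\end{align*}
where the Beta integral is finite because $\alpha>0$ and $\alpha m/(1-m)+1>0$. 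Combining with \eqref{Fundamental integral formula} gives $F(t)\ge \frac{1}{\Gamma(\alpha)}B(\alpha,\frac{\alpha m}{1-m}+1)\,t^\alpha u^m(t,x_0)$, which is the claimed bound with $\c=\c(m,\alpha)$ the reciprocal of that constant.

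\medskip

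\noindent\textbf{Case $m>1$.} Now Theorem \ref{Benilan-Crandall}(2) says $\tau\mapsto \tau^{\alpha/(m-1)}u(\tau,x_0)$ is non-decreasing, so for $\tau\ge t$ we get $u(\tau,x_0)\ge (t/\tau)^{\alpha/(m-1)}u(t,x_0)$, i.e. $u^m(\tau,x_0)\ge (t/\tau)^{\alpha m/(m-1)}u^m(t,x_0)$. Since the available monotonicity now points forward in time, I evaluate \eqref{Fundamental integral formula} at time $2t$ and restrict the $\tau$-integral to $[t,2t]$:
\begin{align*}
  \Gamma(\alpha)\,F(2t)
  &=\int_0^{2t}(2t-\tau)^{\alpha-1}u^m(\tau,x_0)\dtau
  \ge \int_t^{2t}(2t-\tau)^{\alpha-1}u^m(\tau,x_0)\dtau\\
  &\ge t^{\alpha m/(m-1)}u^m(t,x_0)\int_t^{2t}(2t-\tau)^{\alpha-1}\tau^{-\alpha m/(m-1)}\dtau
  \ge t^{\alpha m/(m-1)}u^m(t,x_0)\,(2t)^{-\alpha m/(m-1)}\int_t^{2t}(2t-\tau)^{\alpha-1}\dtau,
\end{align*}
using $\tau\le 2t$ in the last step. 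The final integral equals $t^\alpha/\alpha$, so $\Gamma(\alpha)F(2t)\ge 2^{-\alpha m/(m-1)}\alpha^{-1}t^\alpha u^m(t,x_0)$, giving the stated estimate with $\c=\c(m,\alpha)$. Note $F(2t)=\int_\Omega(u_0(x)-u(2t,x))\G(x,x_0)\dx$, matching the statement.

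\medskip

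\noindent\textbf{Main obstacle.} The only genuinely delicate point is that \eqref{Fundamental integral formula} and Theorem \ref{Benilan-Crandall} hold for a.e. $x_0$ and a.e. $t$, so one must be slightly careful that the pointwise monotonicity inequality in $\tau$ can be used inside the $\tau$-integral for a fixed (a.e.) $x_0$; this is fine because the monotonicity statement of Theorem \ref{Benilan-Crandall} is ``for any $t>0$ and a.e. $x$'', i.e. after fixing a good $x_0$ the inequality holds for all $\tau$. A minor point is that boundedness of $u$ (assumed in the statement) guarantees all integrands are finite and the manipulations legitimate; the extension to general $H^*$-solutions is handled separately via approximation, so nothing more is needed here. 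Everything else is the elementary Beta-function computation above.
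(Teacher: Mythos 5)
Your proof is correct and follows essentially the same route as the paper: both use the fundamental pointwise formula \eqref{Fundamental integral formula} together with the Benilan--Crandall monotonicity of Theorem \ref{Benilan-Crandall}, yielding the Beta-function constant $B\bigl(\alpha,\tfrac{\alpha m}{1-m}+1\bigr)$ for $0<m<1$ and, for $m>1$, the same restriction of the $\tau$-integral to $[t,2t]$ with the bound $\tau\le 2t$ giving the constant $\alpha^{-1}2^{-\alpha m/(m-1)}$.
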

\begin{proof}
The idea of the proof is to use Benilan-Crandall estimates \eqref{Benilan-Crandall} on the right hand side of the fundamental integral formula \eqref{Fundamental integral formula}. Namely, it suffices to prove the following inequality
\begin{equation}\label{pointwise-integral ineq}
  \c \;t^\alpha\, u^{m}(t,x_0)\le\int_0^{t_1}\frac{u^{m}(\tau,x_0)}{(t_1-\tau)^\alpha}\dtau,
\end{equation}
with $\c=\c(m,\alpha)>0$ and $t_1=t$ if $0<m<1$ and $t_1=2t$ for $m>1$.\\
\noindent$\bullet~${\it Case $0<m<1$:}  By Theorem \ref{Benilan-Crandall}, we have that
 $$u^{m}(\tau,x_0)\ge\left( \frac{\tau}{t}\right)^{\frac{\alpha m}{1-m}}u^{m}(t,x_0)\qquad \mbox{for}\quad 0<\tau<t\,.$$
 Then,
 \[
\begin{split}
  \int_{0}^{t}\frac{u^{m}(\tau,x_0)}{(t-\tau)^{1-\alpha}}\dtau  &\ge \frac{u^{m}(t,x_0)}{t^{\frac{\alpha m}{1-m}}}\int_{0}^{t}\frac{\tau^{\frac{\alpha m}{1-m}}}{(t-\tau)^{1-\alpha}}\dtau\\
  &= t^\alpha u^{m}(t,x_0)\int_{0}^{1}\frac{r^{\frac{\alpha m}{1-m}}}{(1-r)^{1-\alpha}}{\rm d}r\\
  &=B\left(\frac{\alpha m}{1-m}+1,\alpha\right) t^{\alpha} u^{m}(t,x_0)\,,
\end{split}
\]
where we used the change of variable $tr=\tau$ and the definition of the beta function $B(z_1,z_0)=\int_{0}^{1}\tau^{z_1-1}(1-\tau)^{z_2-1}{\rm d}\tau$, which is well finite for $z_1,z_2>0$.\vspace{3mm}

\noindent$\bullet~${\it Case $m>1$:}  By Theorem \ref{Benilan-Crandall}, we have that
$$u^{m}(\tau)\ge \left(\frac{t}{\tau}\right)^{\frac{\alpha m}{m-1}} u^{m}(t,x_0) \qquad\mbox{for}\quad 0<t<\tau<2t\,.$$
Then,
\begin{align*}
  \int_{0}^{2t}\frac{u^{m}(\tau,x_0)}{(2t-\tau)^{1-\alpha}}\dtau&\ge\int_{t}^{2t}\frac{u^{m}(\tau,x_0)}{(2t-\tau)^{1-\alpha}}\dtau\\
  &\ge t^{\frac{\alpha(m)}{m-1}}u^{m}(t,x_0)\int_{t}^{2t}\frac{\dtau}{(2t-\tau)^{1-\alpha}\;\tau^{\frac{\alpha m}{m-1}}}\\
  &\ge\frac{1}{2^{\frac{\alpha m}{m-1}}}\; u^{m}(t,x_0)\int_{t}^{2t}\frac{\dtau}{(2t-\tau)^{1-\alpha}}\\
  &=\frac{1}{\alpha\,2^{\frac{\alpha m}{m-1}}}\;t^\alpha \,u^{m}(t,x_0)\,.
\end{align*}
The proof follows by combining \eqref{Fundamental integral formula} and \eqref{pointwise-integral ineq}.
\end{proof}

Next, we recall a result from \cite{BFV2018CalcVar} involving $L^q$ estimates of the Green function.

\begin{lemma}[Green function estimates, \cite{BFV2018CalcVar}]\label{Lem.Green}Let $\G$ be the kernel of $\AI$, and assume that \eqref{K1} holds. Then, for all $0<q<{N}/(N-2s)$, there exist a constant $c_{2,\Omega}(q)>0$ such that
	\begin{equation}\label{Lem.Green.est.Upper.I}
	\sup_{x_0\in\Omega}\int_{\Omega}\G^q(x , x_0)\dx \le c_{2,\Omega}(q)\,.
	\end{equation}
 The constant $c_{2,\Omega}$ depends only on $s,N, q, \Omega$, and have an explicit expression, cf. \cite{BFV2018CalcVar}.
\end{lemma}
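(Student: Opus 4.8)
The plan is to derive \eqref{Lem.Green.est.Upper.I} directly from the pointwise bound \eqref{K1}, reducing the statement to the elementary fact that the Riesz kernel $|x-x_0|^{-q(N-2s)}$ is integrable over a bounded set precisely when $q(N-2s)<N$. First I would fix $x_0\in\Omega$ and raise \eqref{K1} to the power $q>0$, obtaining $\mathbb{G}^q(x,x_0)\le c_{1,\Omega}^q\,|x-x_0|^{-q(N-2s)}$ for a.e. $x\in\Omega$. Then, since $\Omega$ is bounded, I would enclose it in the ball $B_R(x_0)$ with $R:=\operatorname{diam}(\Omega)<+\infty$ --- note that this same $R$ works for every $x_0\in\Omega$, which is exactly what will make the final bound uniform in $x_0$ --- and estimate
\[
\int_\Omega \mathbb{G}^q(x,x_0)\dx \le c_{1,\Omega}^q\int_{B_R(x_0)}\frac{\dx}{|x-x_0|^{q(N-2s)}} = c_{1,\Omega}^q\,\omega_{N-1}\int_0^R r^{N-1-q(N-2s)}\,\rd r,
\]
where $\omega_{N-1}=|\partial B_1(0)|$ and the last equality is just polar coordinates centred at $x_0$.

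The key point is then the radial integral: recalling that $N>2s$, so $N-2s>0$, the exponent satisfies $N-1-q(N-2s)>-1$ if and only if $0<q<N/(N-2s)$, which is precisely the hypothesis; and in that range
\[
\int_0^R r^{N-1-q(N-2s)}\,\rd r = \frac{R^{\,N-q(N-2s)}}{N-q(N-2s)}\,.
\]
Combining the two displays,
\[
\sup_{x_0\in\Omega}\int_\Omega \mathbb{G}^q(x,x_0)\dx \le c_{1,\Omega}^q\,\omega_{N-1}\,\frac{(\operatorname{diam}\Omega)^{\,N-q(N-2s)}}{N-q(N-2s)}=:c_{2,\Omega}(q),
\]
and the right-hand side is finite and depends only on $s,N,q$ and $\Omega$ (through $c_{1,\Omega}$ and $\operatorname{diam}\Omega$), which is the explicit expression advertised in the statement.

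There is no serious obstacle here: it is a one-line reduction via \eqref{K1} followed by a standard polar-coordinate computation. The only point to be careful about is the uniformity in $x_0$ --- one must not bound $\int_\Omega$ by an integral over $B_\rho(x_0)\cap\Omega$ with a radius $\rho$ depending on $x_0$, but rather use the single radius $R=\operatorname{diam}(\Omega)$ valid for all $x_0\in\Omega$ --- and the sharpness of the admissible range of $q$ is of course forced by the non-integrable singularity of $|x-x_0|^{-q(N-2s)}$ at $x=x_0$ once $q\ge N/(N-2s)$.
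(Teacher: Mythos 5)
Your proof is correct and is exactly the standard argument behind this lemma, which the paper itself does not reprove but simply quotes from \cite{BFV2018CalcVar}: raise \eqref{K1} to the power $q$, enclose $\Omega$ in $B_{R}(x_0)$ with $R=\operatorname{diam}(\Omega)$ uniformly in $x_0$, and integrate the Riesz kernel in polar coordinates, with the condition $q<N/(N-2s)$ appearing precisely as the integrability threshold. Nothing is missing.
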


Finally, we are in the position to prove the $L^p-L^\infty$ smoothing effects of Theorem \ref{Thm smoothing effect}, namely 
$$\|u(t)\|_\infty\le\ka\frac{\|u_0\|^{\frac{1}{m}}_p}{t^{\frac{\alpha}{m}}}\qquad\forall t>0\,.$$

\noindent{\it Proof of Theorem \ref{Thm smoothing effect}.} First, we prove the estimate for nonnegative solutions and then generalize the result for sign-changing solutions.

 \noindent$\bullet~$\textbf{Step 1. }{\it(Smoothing effect for nonnegative solutions).} If we combine Proposition \ref{Fundamental pointwise estimate} and Lemma \ref{Lem.Green}, the boundedness of solutions $u(t)$ follows for any $t>0$ given that $0\le u_0\in L^{q'}(\Omega)$ with $q'>\frac{N}{2s}$:
  \begin{align*}
    \|u(t)\|_\infty^m&\le \sup\limits_{x_0\in\Omega}\frac{\c}{t^\alpha}\int_\Omega(u_0(x)-u(t_1,x))\G(x,x_0)\dx\le\sup\limits_{x_0\in\Omega}\frac{\c}{t^\alpha}\int_\Omega u_0(x)\G(x,x_0)\dx\\
    &\le\frac{\c}{t^\alpha}\|u_0\|_{q'}\sup\limits_{x_0\in\Omega}\|\G(\cdot,x_0)\|_{q}\le C \frac{\|u_0\|_{q'}}{t^\alpha},
  \end{align*}
  where we have used the nonnegativity of $u(t_1,x)$ with $t_1=t$ if $0<m<1$ and $t_1=2t$ if $m>1$, and Young's inequality. The constant $C$ depends on $\alpha,m,s,N$ and $\Omega$.

 \noindent$\bullet~$\textbf{Step 2. }{\it(Smoothing effect for sign-changing solutions).} We reduce the problem to nonnegative initial data using the comparison principle. We split the signed initial datum,
 $$u_0=(u_0)_+-(u_0)_-\,,$$
 and consider the nonnegative $H^*$-solutions $\tilde{u}^+$ and $\tilde{u}^-$ with nonnegative initial data $(u_0)_+,(u_0)_-$, respectively. First, we have to check that $(u_0)_+,(u_0)_-\in L^p(\Omega)\cap H^*(\Omega)$ with $p$ as in \eqref{Condition for p}. Since $u_0\in L^p(\Omega)\cap H^*(\Omega)$ with $p$ satisfying \eqref{Condition for p}, then $(u_0)_+,(u_0)_-\in L^p(\Omega)$ with the same $p$. It remains to prove that they also belong to $H^*(\Omega)$. For this purpose, we use the Hardy-Littlewood-Sobolev inequality associated to the quadratic form of the operator: There exists $\mathcal{H}_{\A}>0$ such that
 \begin{equation}
   \|f\|_{H^*(\Omega)}\le\mathcal{H}_\A\|f\|_{\frac{2N}{N+2s}}\,,
 \end{equation}
 which holds under assumption \eqref{K1}, see \cite[Theorem 7.5]{BSV2013}. Since $N>2s$ then $\frac{2N}{N+2s}<\frac{N}{2s}$, and hence
 $$\|(u_0)_\pm\|_{H^*(\Omega)}\le\mathcal{H}_\A\|(u_0)_\pm\|_{\frac{2N}{N+2s}}\le\mathcal{H}_\A\left|\Omega\right|^{\frac{N-2s}{2N}}\|(u_0)_\pm\|_{\frac{N}{2s}}\,.$$
 Now, we use the smoothing effects for nonnegative initial data of Step 1 to obtain
\begin{equation*}
  \|\tilde{u}^\pm(t)\|_\infty\le C\,\frac{\|(u_0)_\pm\|_p^{\frac{1}{m}}}{t^{\frac{\alpha}{m}}}\qquad\forall t>0\,.
\end{equation*}
 By the comparison principle of Theorem \ref{Comparison Principle}, $u_0\le(u_0)_+$ a.e. in $\Omega$, therefore we have for all $t>0$
 $$u(t,x)\le\tilde{u}^+(t,x)\le C\,\frac{\|(u_0)_+\|_p^{\frac{1}{m}}}{t^{\frac{\alpha}{m}}}\qquad\mbox{ for  a.e. }x\in\Omega\,.$$
 For the lower bound, note that $-u_0\le(u_0)_-$ a.e. in $\Omega$. Hence, by comparison principle it holds that for every $t>0$
 $$-u(t,x)\le\tilde{u}^-(t,x)\le C\,\frac{\|(u_0)_-\|_p^{\frac{1}{m}}}{t^{\frac{\alpha}{m}}}\qquad\mbox{ for a.e. }x\in \Omega\,,$$
 and the result follows.

\hfill $\square$

On the other hand, if instead of using assumption \eqref{K1} we use \eqref{K2}, we are able to prove the following $L^\infty$ estimate involving the distance to the boundary of $\Omega$.
\begin{theorem}[Upper boundary estimates]\label{thm: Upper boundary estimates 2} Let $m>0$ with $m\neq 1$ and $N>2s>\gamma$. Assume \eqref{K2} and \eqref{K}. Then, for any $u$ being a $H^*$-solution  with nonnegative initial datum $u_0\in L^p(\Omega)\cap H^*(\Omega)$ with $p>\frac{N}{2s-\gamma}$ and $p\ge1+m$ it holds that
  \begin{equation}
    \left\|\frac{u(t)}{\mbox{\rm dist}(\cdot,\partial\Omega)^{\gamma/m}}\right\|_\infty\le \ka\frac{\|u_0\|_p^{1/m}}{t^{\frac{\alpha}{m}}}\qquad\forall t>0\,.
  \end{equation}
\end{theorem}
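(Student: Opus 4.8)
The plan is to retrace the proof of Theorem \ref{Thm smoothing effect}, replacing the crude Green function bound \eqref{K1} by the sharper boundary estimate \eqref{K2}, which lets one pull out the weight $\mbox{\rm dist}(x_0,\partial\Omega)^\gamma$. First I would note that \eqref{K2} implies \eqref{K1} (the two truncated factors in \eqref{K2} are bounded by $1$), and that the hypotheses $p>\frac{N}{2s-\gamma}>\frac{N}{2s}$ together with $p\ge 1+m$ force $u_0$ to satisfy condition \eqref{Condition for p}; hence Theorem \ref{Thm smoothing effect} applies, $u$ is nonnegative by comparison and $u(t)$ is bounded for every $t>0$, so the hypotheses of the Fundamental upper bounds (Proposition \ref{Fundamental pointwise estimate}) are met. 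This gives, for a.e. $x_0\in\Omega$ and a.e. $t>0$,
\begin{equation*}
  u^m(t,x_0)\le\frac{\c}{t^\alpha}\int_\Omega\big(u_0(x)-u(t_1,x)\big)\,\G(x,x_0)\dx\le\frac{\c}{t^\alpha}\int_\Omega u_0(x)\,\G(x,x_0)\dx\,,
\end{equation*}
with $t_1=t$ if $0<m<1$ and $t_1=2t$ if $m>1$, using $u(t_1,\cdot)\ge0$ and $\G\ge0$.

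The core of the argument is the Green function step. I would invoke \eqref{K2}, keeping the factor carrying $x_0$ and bounding the one carrying $x$ by $1$, to get $\G(x,x_0)\le c_{1,\Omega}\,\mbox{\rm dist}(x_0,\partial\Omega)^\gamma\,|x-x_0|^{-(N-2s+\gamma)}$. Substituting this and applying H\"older's inequality with exponents $p$ and $p'=p/(p-1)$,
\begin{equation*}
  \int_\Omega u_0(x)\,\G(x,x_0)\dx\le c_{1,\Omega}\,\mbox{\rm dist}(x_0,\partial\Omega)^\gamma\,\|u_0\|_p\left(\int_\Omega\frac{\dx}{|x-x_0|^{(N-2s+\gamma)p'}}\right)^{1/p'}\,.
\end{equation*}
Because $\Omega$ is bounded, the last integral is dominated by $\int_{B_{2\,{\rm diam}\,\Omega}(0)}|y|^{-(N-2s+\gamma)p'}\dy$, which is finite and independent of $x_0$ exactly when $(N-2s+\gamma)p'<N$, i.e. $p>\frac{N}{2s-\gamma}$ --- this is precisely where that threshold enters, and it plays the role that $p>\frac{N}{2s}$ plays in Theorem \ref{Thm smoothing effect} via Lemma \ref{Lem.Green}. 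Denote by $C_\Omega=C_\Omega(N,s,\gamma,p,\Omega)$ the constant so produced.

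Combining the two displays, dividing through by $\mbox{\rm dist}(x_0,\partial\Omega)^\gamma$, taking the $(1/m)$-th power and the supremum over $x_0\in\Omega$ yields the assertion with $\ka=(\c\,c_{1,\Omega}\,C_\Omega)^{1/m}$. The main obstacle I foresee is justifying the a priori boundedness of $u(t)$ needed to apply Proposition \ref{Fundamental pointwise estimate}: here it is handed to us by Theorem \ref{Thm smoothing effect}, but to avoid any appearance of circularity one can instead first establish the estimate for the truncated data $u_{0,n}=\min\{u_0,n\}$, whose solutions $u_n$ are bounded and satisfy the bound with a constant independent of $n$, and then let $n\to\infty$ using the comparison principle and monotone convergence. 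By contrast, the uniformity of the H\"older constant as $x_0$ approaches $\partial\Omega$ is automatic, since the dominating integral over the fixed ball does not involve $x_0$.
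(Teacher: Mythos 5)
Your proposal is correct and follows essentially the same route as the paper: Proposition \ref{Fundamental pointwise estimate}, the upper bound in \eqref{K2} with the $x_0$-factor retained to extract $\mbox{\rm dist}(x_0,\partial\Omega)^{\gamma}$, and H\"older's inequality, where $p>\frac{N}{2s-\gamma}$ is exactly the condition making $|x-x_0|^{-(N-2s+\gamma)}$ lie in $L^{p'}(\Omega)$ uniformly in $x_0$. Your extra remarks on the a priori boundedness needed for Proposition \ref{Fundamental pointwise estimate} (supplied by Theorem \ref{Thm smoothing effect}, or by truncation) only make explicit a point the paper leaves implicit.
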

\begin{proof}
  Let us combine Proposition \ref{Fundamental pointwise estimate} with assumption \eqref{K2} in order to obtain
  \begin{align*}
    u^m(t,x_0)&\le\frac{\c}{t^\alpha}\int_\Omega\left(u_0(x)-u(t_1,x)\right)\G(x,x_0)\dx\le \frac{\c}{t^\alpha}\int_\Omega \frac{u_0(x)\,\mbox{\rm dist}(x_0,\partial\Omega)^{\gamma}}{|x-x_0|^{N-2s+\gamma}}\dx\\
    &\le c_\Omega\frac{\|u_0\|_p}{t^\alpha}\,\mbox{\rm dist}(x_0,\partial\Omega)^{\gamma}\,,
  \end{align*}
  where we have used the nonnegativity of $u(t_1,x)$ with $t_1=t$ if $0<m<1$ and $t_1=2t$ if $m>1$ and Hölder's inequality with $p>\frac{N}{2s-\gamma}$. The result follows by dividing both sides of the inequality by $\mbox{\rm dist}(\cdot,\partial\Omega)^{\gamma}$ and taking the essential supremum of $x_0$ in $\Omega$.
\end{proof}

As it is stated in Theorem \ref{Thm Boundary behaviour}, the upper boundary estimate above and the comparison principle allow us to provide a Global Harnack Principle for a class of $H^*$-solutions.
\begin{theorem}[Global Harnack Principle for ``big'' initial data]\label{GHP in section} Let $m>\frac{N-2s}{N+2s}$ with $m\neq 1$ and $N>2s>\gamma$. Assume  \eqref{K2} and \eqref{K}. If the initial datum $u_0\in L^p(\Omega)\cap H^*(\Omega)$ with $p>\frac{N}{2s-\gamma}$ and $p\ge1+m$ satisfies
$$ u_0(x)\ge c\,\mbox{\rm dist}(x,\partial\Omega)^{\gamma/m}\qquad\mbox{ for a.e. }x\in\Omega\,,$$
for certain $c>0$, then it holds that for every $t>0$
  \begin{equation}
     \frac{c_0}{1+t^{\alpha}}\le\frac{ u^m(t,x)}{\mbox{\rm dist}(x,\partial\Omega)^\gamma}\le \frac{c_1}{t^{\alpha}}\qquad\mbox{ for a.e. } x\in\Omega\,.
  \end{equation}
\end{theorem}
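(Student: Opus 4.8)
Here is the plan I would follow.

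The statement splits into the two one–sided bounds, and the upper one is essentially free. Since $u_0\in L^p(\Omega)\cap H^*(\Omega)$ with $p>\frac{N}{2s-\gamma}$ and $p\ge 1+m$, Theorem~\ref{thm: Upper boundary estimates 2} already gives $\|u(t)\,\mathrm{dist}(\cdot,\partial\Omega)^{-\gamma/m}\|_\infty\le\ka\,\|u_0\|_p^{1/m}\,t^{-\alpha/m}$; raising this to the $m$–th power yields $u^m(t,x)\le \ka^{m}\|u_0\|_p\,t^{-\alpha}\,\mathrm{dist}(x,\partial\Omega)^{\gamma}$ a.e., which is the right–hand inequality in \eqref{GHP} with $c_1=\ka^{m}\|u_0\|_p$. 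So the whole content is the lower bound, and the plan is to produce it by comparing $u$ from below with an explicit \emph{separation-of-variables} solution $\underline U(t,x)=g(t)\,S(x)$ of \eqref{CDP} and then invoking Theorem~\ref{Comparison Principle}.

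The spatial profile $S$ should be chosen so that $\A S^m=\mu S$ for some constant $\mu>0$ together with $S\asymp \mathrm{dist}(\cdot,\partial\Omega)^{\gamma/m}$. Writing $W:=S^m$, this is the semilinear Dirichlet problem $\A W=\mu W^{1/m}$ in $\Omega$, $W=0$ on $\partial\Omega$. When $m>1$ this is a sublinear problem, for which a bounded positive solution with $W\asymp\Phi_1\asymp\mathrm{dist}(\cdot,\partial\Omega)^{\gamma}$ is standard (the ``friendly giant'' spatial profile, as in \cite{BSV2013,BV-PPR1}); when $0<m<1$ the exponent $1/m$ is $>1$, and a positive solution exists precisely because $\tfrac1m<\tfrac{N+2s}{N-2s}$, i.e.\ $\tfrac1m+1$ is Sobolev–subcritical for $\A$ — this is exactly where the hypothesis $m>\tfrac{N-2s}{N+2s}$ is used — after which the a priori $L^\infty$ bound and the boundary regularity encoded in \eqref{K2} give again $W\asymp\mathrm{dist}(\cdot,\partial\Omega)^{\gamma}$ (cf.\ \cite{BFV2018CalcVar,BII}). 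Setting $S:=W^{1/m}$ one then has $S\asymp\mathrm{dist}(\cdot,\partial\Omega)^{\gamma/m}$, $S\in L^{1+m}(\Omega)\cap H^*(\Omega)$, $S^m=W\in H(\Omega)$, and $\AI S=\mu^{-1}W=\mu^{-1}S^m$. Consequently $\underline U(t,x):=g(t)S(x)$ is a bona fide $H^*$–solution of \eqref{CDP} with datum $g(0)S$ as soon as $g$ solves the scalar fractional ODE $\Dc g=-\mu g^m$: checking the three conditions in the definition of $H^*$–solution is routine, and the dual equation $\int_\Omega \Dc\underline U\,\AI\varphi=-\int_\Omega\underline U^m\varphi$ reduces, via $\AI S=\mu^{-1}S^m$, to $(\Dc g)\,\mu^{-1}S^m=-g^m S^m$.

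Next I would take $g$ to be the solution of $\Dc g=-\mu g^m$ with $g(0)=g_0>0$ chosen small enough that $g_0\,\|S\,\mathrm{dist}(\cdot,\partial\Omega)^{-\gamma/m}\|_\infty\le \c$; then $\underline U(0,\cdot)=g_0 S\le \c\,\mathrm{dist}(\cdot,\partial\Omega)^{\gamma/m}\le u_0$ a.e., so Theorem~\ref{Comparison Principle} (whose hypotheses \eqref{K1} and \eqref{K} hold, the former being implied by \eqref{K2}) gives $u(t,x)\ge g(t)S(x)$ a.e.\ for all $t>0$, hence $u^m(t,x)\ge g(t)^m W(x)\ge \underline\kappa'\,g(t)^m\,\mathrm{dist}(x,\partial\Omega)^{\gamma}$. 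It then only remains to quote the sharp behaviour of the scalar fractional ODE — the non–extinction phenomenon in the spirit of \cite{VZ,VZ-Blowup}, recalled in Appendix~\ref{sec: FODE} — namely that its positive nonincreasing solution satisfies $g(t)\ge \tilde c_0\,(1+t^{\alpha})^{-1/m}$ for all $t>0$; this gives $\tfrac{u^m(t,x)}{\mathrm{dist}(x,\partial\Omega)^{\gamma}}\ge \tfrac{c_0}{1+t^{\alpha}}$ with $c_0=\underline\kappa'\,\tilde c_0^{\,m}$, completing the proof.

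The main obstacle is the construction of the spatial profile $S$, i.e.\ solving $\A W=\mu W^{1/m}$ with the matching two–sided boundary behaviour $W\asymp\mathrm{dist}(\cdot,\partial\Omega)^{\gamma}$: this is where the restriction $m>\tfrac{N-2s}{N+2s}$ genuinely bites (subcriticality of a fractional Lane–Emden type problem in the fast–diffusion range), and it is the reason the Global Harnack Principle is stated only for ``big'' data comparable to $\mathrm{dist}(\cdot,\partial\Omega)^{\gamma/m}$. A secondary point needing care is checking that $g(t)S(x)$ is an $H^*$–solution rather than merely a subsolution, so that Theorem~\ref{Comparison Principle} — stated for solutions — applies verbatim; this is precisely the role of the identity $\AI S=\mu^{-1}S^m$. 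The quantitative lower bound for the fractional ODE, once Appendix~\ref{sec: FODE} is in place, is elementary.
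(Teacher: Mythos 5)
Your proposal is correct and follows essentially the same route as the paper: the upper bound is quoted from Theorem \ref{thm: Upper boundary estimates 2}, and the lower bound comes from comparison (Theorem \ref{Comparison Principle}) with a separate-variables solution $F(t)S(x)$, where $S$ solves the Lane--Emden type profile equation (existence requiring $m>\frac{N-2s}{N+2s}$) with $S\asymp\mathrm{dist}(\cdot,\partial\Omega)^{\gamma/m}$ by \eqref{K2} and \cite{BFV2018CalcVar}, and $F$ solves the fractional ODE whose sharp decay $F(t)\asymp(1+t^{\alpha/m})^{-1}$ is taken from \cite{VZ}. Your extra care in checking that $g(t)S(x)$ is a genuine $H^*$-solution and in choosing $g_0$ small so that $g_0S\le u_0$ matches the paper's choice $F_0=C_0$, so the two arguments coincide in substance.
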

\begin{proof}
  The upper bound follows directly by Theorem \ref{thm: Upper boundary estimates 2}. For the lower bound we use the comparison principle of Theorem \ref{Comparison Principle} with respect to the separate-variables solution, which exists if $m>\frac{N-2s}{N+2s}$. This solution has the form
  $$v(t,x)=F(t)S(x)\,,$$
  with $S$ solving the elliptic problem $\A S^m= S$ on $\Omega$ and $F$ solving the fractional ODE of the form $\Dc F=-F^m$ for $t>0$ with $F(0)=F_0>0$. We give a  precise  description of these solutions in Section \ref{section: sharp decay}. As it was proved in \cite{BFV2018CalcVar}, under assumption \eqref{K2} the function $S(x)$ behaves like
$$S(x)\asymp \mbox{\rm dist}(x,\partial\Omega)^{\frac{\gamma}{m}}\qquad\mbox{for a.e. }x\in\Omega\,,$$
and hence, from the assumption on the initial data we have
$$u_0(x)\ge c\,\mbox{dist}(x,\partial\Omega)^{\frac{\gamma}{m}}\ge C_0S(x)\qquad\mbox{ a.e. in }\Omega\,.$$
Now, choosing $F_0=C_0>0$ we obtain
$$u_0(x)\ge F_0S(x)=v(0,x)\qquad\mbox{ a.e. in }\Omega\,.$$
Finally, by comparison principle in Theorem \ref{Comparison Principle}, for a.e. $x\in\Omega$ and every $t>0$ it holds that
$$u(t,x)\ge v(t,x)=F(t)S(x)\ge c\,F(t) \,\mbox{dist}(x,\partial\Omega)^{\gamma/m}\ge c_0\,\frac{\mbox{dist}(x,\partial\Omega)^{\gamma/m}}{1+t^{\alpha}}\,,$$
since we have used the lower estimates for $F(t)$ proved in \cite{VZ}, see Theorem \ref{FODE 1}.
\end{proof}

\section{Sharp time decay of \texorpdfstring{$L^p$}{Lp}-norms}\label{section: sharp decay}
This section contains the proof Theorem \ref{Thm Sharp Decay Lp};  in particular, we show that every $H^*$-solution does not extinguish in finite time.

Let us explain, heuristically, why one should {\em{not}} expect extinction in finite time,  unlike the case with classical time derivative. Let us consider solutions of  for \eqref{CDP} of the form
$$v(t,x)=F(t)S(x).$$ Then, $F$ and $S$ satisfy the relation
\begin{equation*}
  \frac{\A S^m(x)}{S(x)}=-\frac{\Dc F(t)}{F^m(t)}=\lambda\,.
\end{equation*}
The equation satisfied by $S(x)$ is the same as in the classical time derivative case. Defining $V(x):=S^m(x)$ and $p:=\tfrac{1}{m}$ we get the well-known semilinear equation of Lane-Emden-Fowler type:
\begin{equation} \label{49}
\begin{cases}
\A V(x)&=\lambda V^p(x)\qquad\mbox{in}\quad\Omega\,,\\
V(x)&=0\qquad\qquad\;\;\mbox{on}\quad\partial\Omega\,.
\end{cases}
\end{equation}
  Due to the Pohozaev identity, in the local and the nonlocal setting, nontrivial solutions to (\ref{49}) fail to exists for $p<\frac{N+2s}{N-2s}$, equivalently, for $m>m_s:=\frac{N-2s}{N+2s}$. As shown in \cite{BFV2018CalcVar}, under assumption \eqref{K2}, solutions to (\ref{49}) behave like
$$V(x)\asymp \mbox{\rm dist}(x,\partial\Omega)^\gamma\asymp\Phi_1(x)\,,$$
with the parameter $\gamma>0$ depending on the operator $\A$.

The time-dependent function $F(t)$, solution to
\begin{equation}
\begin{cases}
\Dc F(t)=-\lambda F^m(t)\,,\\
F(0)=F_0>0\,,
\end{cases}
\end{equation}
has a completely different behaviour than in the classical case: by Theorem \ref{FODE 1}, proved in \cite{VZ}, we know that $F(t)$ is bounded above and below by a function of the form $\frac{1}{1+t^{\frac{\alpha}{m}}}$, more precisely
\begin{equation}\label{Decay of time dependent}
\frac{c_1}{1+t^{\frac{\alpha}{m}}}\le F(t)\le\frac{c_2}{1+t^{\frac{\alpha}{m}}} \qquad \forall t>0\,,
\end{equation}
for some constants $c_1,c_2>0$ that only depend on $F_0,\lambda,\alpha$ and $m$. The lower bound in \eqref{Decay of time dependent} implies that $v(x,t)$ does not vanish in finite time. Note also that the decay rate of any $L^p$-norm of $v(x,t)$  coincides with the regularization rate shown in Theorem \ref{Thm smoothing effect}.

An important question is whether this non-extinction phenomenon in finite time occurs for all nonnegative solution of \eqref{CDP}. As we will prove in this section, the answer is affirmative and the time decay for the $L^p$-norms for all  $1\le p\le\infty$ is sharp.

The proof starts with a lower bound for the $L^1_{\Phi_1}$-norm of the solution with  a rate of $t^{-\alpha/m}$. Then, we conclude with an interpolation argument between the $L^1_{\Phi_1}$-norm and the $L^\infty$-norm.

But first let us show the following estimate for the $L^1_{\Phi_1}$-norm of the Caputo derivative.
\begin{lemma}
  Let $m>0$ and $u$ be a $H^*$-solution of \eqref{CDP} with $u_0\in L^{1+m}(\Omega)\cap H^*(\Omega)$. Then, estimate \eqref{Akagi estimate}, which we recall here,
   $$ \ell*\|\Dc u(t)\|^2_{H^*(\Omega)}\in L^\infty((0,T))\,,$$
   implies that
  \begin{equation}\label{estimate for L1Phi1 Caputo}
    \ell*\|\Dc u(t)\|_{L^1_{\Phi_1}(\Omega)}\in L^\infty((0,T))\,.
  \end{equation}
\end{lemma}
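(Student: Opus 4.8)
The plan is to pass to the dual formulation $\Dc u(t)=-\A u^{m}(t)$ (which holds in $H^{*}(\Omega)$ for a.e.\ $t$ by the definition of $H^{*}$-solution) and to reduce \eqref{estimate for L1Phi1 Caputo} to a pointwise-in-time estimate. Since $\ell\in L^{1}((0,T))$ one has $(\ell*1)(t)=t^{\alpha}/\Gamma(1+\alpha)\le T^{\alpha}/\Gamma(1+\alpha)$, and Cauchy--Schwarz in the time variable gives, for every nonnegative $g$,
\[
(\ell*g)(t)\le (\ell*1)^{1/2}(t)\,(\ell*g^{2})^{1/2}(t)\,.
\]
Hence it suffices to find, for a.e.\ $t$, a decomposition $\|\Dc u(t)\|_{L^{1}_{\Phi_1}(\Omega)}\le a(t)+b(t)$ with $\ell*a^{2}\in L^{\infty}((0,T))$ and $b\in L^{\infty}((0,T))$: then $\ell*\|\Dc u\|_{L^{1}_{\Phi_1}(\Omega)}\le(\ell*a^{2})^{1/2}(\ell*1)^{1/2}+\|b\|_{\infty}(\ell*1)\in L^{\infty}((0,T))$, and the natural choice is $a\sim\|\Dc u\|_{H^{*}(\Omega)}$, for which $\ell*a^{2}\in L^{\infty}$ is exactly the hypothesis \eqref{Akagi estimate}.

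To produce the decomposition, write $|\A u^{m}|=\A u^{m}+2(\A u^{m})_{-}$, with $(\A u^{m})_{-}=(\Dc u)_{+}$, so that
\[
\|\Dc u(t)\|_{L^{1}_{\Phi_1}(\Omega)}=\int_{\Omega}\A u^{m}(t)\,\Phi_1\dx+2\int_{\Omega}\big(\Dc u(t)\big)_{+}\Phi_1\dx\,.
\]
The first term is harmless: by self-adjointness of $\A$ and $\A\Phi_1=\lambda_1\Phi_1$ it equals $\lambda_1\int_{\Omega}u^{m}(t)\Phi_1\dx\le\lambda_1\|\Phi_1\|_{1+m}\|u(t)\|_{1+m}^{m}\le\lambda_1\|\Phi_1\|_{1+m}\|u_{0}\|_{1+m}^{m}$, where the last inequality uses the energy monotonicity $\Es(u(t))\le\Es(u_{0})$ from Theorem \ref{Existence Akagi GF}; this yields a bounded $b$. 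For the second term one exploits Kato's inequality \eqref{K}, the positivity $u,\Phi_1\ge 0$, and the chain-rule inequality for the Caputo derivative (Lemma \ref{fundamental identity}): via a convex $C^{1}$ regularization of $r\mapsto r_{+}$ and the sign of the Kato term, the goal is to bound $\int_{\Omega}(\Dc u(t))_{+}\Phi_1\dx$ by a multiple of $\|\Dc u(t)\|_{H^{*}(\Omega)}$ plus a bounded remainder — this is the piece playing the role of $a(t)$.

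The main obstacle is exactly this last step: taking absolute values is not a bounded operation on the negative-order space $H^{*}(\Omega)$, so a priori $\|\Dc u(t)\|_{L^{1}_{\Phi_1}(\Omega)}$ is not comparable with $\|\Dc u(t)\|_{H^{*}(\Omega)}$, and for nonlocal $\A$ one cannot even take $\Dc u(t)\in L^{1}_{\Phi_1}(\Omega)$ for granted pointwise in $t$. The structure that rescues the argument is the sign information encoded by Kato's inequality, and it is cleanest at the discretized level of Section \ref{Discrete problem}: there $(D^{\alpha}_{\tau}U)_{k}=-\A U_{k}^{m}$ is a finite linear combination of the functions $U_{j}\in L^{1+m}(\Omega)\subset L^{1}_{\Phi_1}(\Omega)$, hence automatically lies in $L^{1}_{\Phi_1}(\Omega)$, and the Kato/T-contractivity computation of Theorem \ref{Discrete T-contractivity L1Phi1} already controls the relevant quantities. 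I would therefore prove the discrete analogue of \eqref{estimate for L1Phi1 Caputo} — combining the discrete energy decay $(D^{\alpha}_{\tau}\Es(U))_{k}\le-\|(D^{\alpha}_{\tau}U)_{k}\|_{H^{*}(\Omega)}^{2}$, the splitting above, and a discrete Cauchy--Schwarz — with a bound uniform in $\tau$, and then pass to the limit $\tau\to0$ using the convergence \eqref{Convergence of the discretization} of Theorem \ref{existence of limit solutions} together with lower semicontinuity of $\|\cdot\|_{L^{1}_{\Phi_1}(\Omega)}$. (Equivalently, one could run the same computation on Akagi's regularized equation $\varepsilon\partial_{t}u_{\varepsilon}+\Dc u_{\varepsilon}=-\A u_{\varepsilon}^{m}$, for which $\A u_{\varepsilon}^{m}$ is more regular, and let $\varepsilon\to0$.)
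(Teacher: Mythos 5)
There is a genuine gap, and it lies exactly where you flag it yourself: the estimate $\int_\Omega(\Dc u(t))_+\Phi_1\dx\lesssim\|\Dc u(t)\|_{H^*(\Omega)}+C$ is never proved, only announced as a ``goal''. Kato's inequality \eqref{K} controls quantities of the form $\int_\Omega\A[h(u^m)]\Phi_1\dx$, not the positive part $(\A u^m)_+$ paired against $\Phi_1$, and positive parts are simply not bounded operations relative to a negative-order norm; nothing in the structure $\Dc u=-\A u^m$, $u^m\ge0$, visibly rescues this. The fallback to the discrete scheme does not close the gap either: a $\tau$-uniform $L^1_{\Phi_1}$ bound on $(D^\alpha_\tau U)_k$ is exactly as hard as the continuous statement (the crude bound from $U_j\in L^{1+m}(\Omega)$ carries a factor $\tau^{-\alpha}$), and the convergence you invoke, \eqref{Convergence of the discretization} of Theorem \ref{existence of limit solutions}, concerns $U_k\to u(t_k)$ in $H^*(\Omega)$, not any convergence of the discrete Caputo derivatives to $\Dc u$; lower semicontinuity of $\|\cdot\|_{L^1_{\Phi_1}(\Omega)}$ along such a limit is therefore not available. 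So neither branch of your plan yields \eqref{estimate for L1Phi1 Caputo} as you interpret it.

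The paper's proof is a two-line duality argument that you missed: since $\Phi_1\in H(\Omega)$, one has $\bigl|\int_\Omega\Dc u(s)\,\Phi_1\dx\bigr|\le\|\Phi_1\|_{H(\Omega)}\|\Dc u(s)\|_{H^*(\Omega)}$, then $x\le\tfrac12(1+x^2)$ and convolution with $\ell\in L^1((0,T))$ reduce everything to the hypothesis \eqref{Akagi estimate} (your Cauchy--Schwarz step $(\ell*g)\le(\ell*1)^{1/2}(\ell*g^2)^{1/2}$ plays the same role). Your observation that the absolute value inside the spatial integral cannot be reached by duality is correct, and it is precisely why the paper's argument, read strictly, controls the signed pairing $\ell*\bigl(\int_\Omega\Dc u\,\Phi_1\dx\bigr)$ rather than $\ell*\int_\Omega|\Dc u|\Phi_1\dx$; but that signed quantity is all that is used later (Proposition \ref{L1Phi decay} only tests the equation with $\lambda_1\Phi_1$ and convolves with $\ell$), so the stronger absolute-value version you set out to prove is neither established by the paper nor needed. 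In short: you identified a real subtlety, but your route to the stated conclusion does not work, and the intended (and sufficient) argument is the direct $H$--$H^*$ duality with $\Phi_1$, for which your term $b(t)=\lambda_1\int_\Omega u^m(t)\Phi_1\dx$ and the whole splitting $|\A u^m|=\A u^m+2(\A u^m)_-$ are unnecessary.
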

\begin{proof}
Since  the kernel $\ell$ is nonnegative and $\Phi_1\in H(\Omega)$, we have that
\begin{align*}
  \left|\ell*\left(\int_\Omega\Dc u(t,x)\Phi_1(x)\dx\right)\right|&=\left|\int_{0}^{t}\int_\Omega\ell(t-s)\Dc u(s,x)\Phi_1(x)\dx\dt\right|\\
  &\le\|\Phi_1\|_{H(\Omega)}\int_{0}^{t}\ell(t-s)\|\Dc u(s)\|_{H^*(\Omega)}\ds\\
  &\le\frac{\|\Phi_1\|_{H(\Omega)}}{2}\int_{0}^{t}\ell(t-s)\left(1+\|\Dc u(s)\|_{H^*(\Omega)}^2\right)\ds\\
  &\le\frac{\|\Phi_1\|_{H(\Omega)}}{2}\left(\|\ell\|_{L^1(0,T)}+\ell*\|\Dc u(t)\|_{H^*(\Omega)}^2\right)\,,
\end{align*}
 thanks to  Young's inequality  in the third line. We conclude by taking the essential supremum in $t\in(0,T)$ and using estimate \eqref{Akagi estimate}.
\end{proof}

The last lemma provides a lower and upper bounds for the $L^1_{\Phi_1}$-norm of the $H^*$-solutions.
\begin{proposition}\label{L1Phi decay}
  Let $0<m\le1$ and u be a $H^*$-solution of \eqref{CDP} with $0\le u_0\in L^1_{\Phi_1}(\Omega)$. Then,
  \begin{equation}\label{nonvanishing}
    \frac{c}{1+t^{\frac{\alpha}{m}}}\le\|u(t)\|_{L^1_{\Phi_1}}\le \|u_0\|_{L^1_{\Phi_1}(\Omega)}\qquad\quad\forall t>0\,,
  \end{equation}
  with  $c$ depending on $\alpha,m,\lambda_1,\|\Phi_1\|_{L^1(\Omega)}$ and $\|u_0\|_{L^1_{\Phi_1}}$.

   \noindent Moreover, if $0\le u_0\in L^p(\Omega)\cap H^*(\Omega)$ with $p>\frac{N}{2s}\ge 1+m$ or $p\ge 1+m>\frac{N}{2s}$, with $m>1$, then
   \begin{equation}
     \frac{c_0}{1+t^{\frac{\alpha}{m}}}\le\|u(t)\|_{L^1_{\Phi_1}}\le\|u_0\|_{L^1_{\Phi_1}(\Omega)}\qquad\quad\forall t>0\,.
   \end{equation}
\end{proposition}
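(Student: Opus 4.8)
The plan is to reduce both inequalities to a scalar fractional differential inequality for the quantity $y(t):=\|u(t)\|_{L^1_{\Phi_1}(\Omega)}=\int_\Omega u(t,x)\Phi_1(x)\dx$, which is legitimate because $u\ge0$ by the comparison principle (Theorem \ref{Comparison Principle}). First I would test the dual formulation \eqref{strong H solution} with $\varphi=\Phi_1$, use $\AI\Phi_1=\lambda_1^{-1}\Phi_1$, and move $\Dc$ through the spatial integral by Lemma \ref{Regularity of convolutions} together with the integrability \eqref{estimate for L1Phi1 Caputo} (which itself follows from \eqref{Akagi estimate}), obtaining for a.e. $t>0$
\[
\Dc y(t)=-\lambda_1\int_\Omega u^m(t,x)\,\Phi_1(x)\dx\le0 .
\]
Convolving with the Riemann--Liouville kernel $\ell$ and using $\ell*k=1$ gives immediately the upper bound $y(t)\le y(0)=\|u_0\|_{L^1_{\Phi_1}(\Omega)}$. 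Moreover $y$ is continuous on $[0,\infty)$, since $t\mapsto u(t)$ is continuous in $H^*(\Omega)$ by \eqref{Holder continuity of limit solution} and $\int_\Omega v\,\Phi_1\dx=\langle v,\lambda_1\Phi_1\rangle_{H^*(\Omega)}$; we may assume $u_0\not\equiv0$, so $y(0)>0$.

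When $0<m\le1$ the map $r\mapsto r^m$ is concave, so Jensen's inequality against the probability measure $\Phi_1\dx/\|\Phi_1\|_{L^1(\Omega)}$ yields $\int_\Omega u^m\Phi_1\dx\le\|\Phi_1\|_{L^1(\Omega)}^{1-m}y(t)^m$, hence $\Dc y\ge-\mu\,y^m$ with $\mu:=\lambda_1\|\Phi_1\|_{L^1(\Omega)}^{1-m}$. Comparing with the solution $F$ of $\Dc F=-\mu F^m$, $F(0)=y(0)$, via the fractional comparison principle for scalar Caputo equations recalled in Appendix \ref{sec: FODE} (using a vanishing perturbation of the initial datum to bypass the non-Lipschitz behaviour of $r^m$ at $0$), one gets $y(t)\ge F(t)$, and Theorem \ref{FODE 1}, i.e. estimate \eqref{Decay of time dependent}, gives $F(t)\ge c\,(1+t^{\alpha/m})^{-1}$ with $c$ depending only on $\alpha,m,\lambda_1,\|\Phi_1\|_{L^1(\Omega)}$ and $\|u_0\|_{L^1_{\Phi_1}(\Omega)}$. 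This establishes \eqref{nonvanishing}.

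When $m>1$ concavity is unavailable, so I would instead use the smoothing effect of Theorem \ref{Thm smoothing effect} (applicable precisely because the assumption on $p$ is exactly \eqref{Condition for p}) in the form $\|u(t)\|_\infty\le\ka\|u_0\|_p^{1/m}t^{-\alpha/m}$, and estimate $\int_\Omega u^m\Phi_1\dx\le\|u(t)\|_\infty^{m-1}y(t)\le \lambda_1^{-1}K\,t^{-\delta}y(t)$, where $\delta:=\alpha\frac{m-1}{m}\in(0,1)$ and $K:=\lambda_1\ka^{m-1}\|u_0\|_p^{(m-1)/m}$. This gives the linear inequality $\Dc y(t)\ge-K\,t^{-\delta}y(t)$, and since $\alpha-\delta=\alpha/m$, a comparison with an explicit sub-solution of the form $\psi(t)=\eta\,G(\kappa t)$, with $\Dc G=-G^m$, $G(0)=1$, $\eta$ small and $\kappa$ large (equivalently, a suitable lemma from Appendix \ref{sec: FODE} in the spirit of \cite{VZ-Blowup}), yields $y(t)\ge c_0\,(1+t^{\alpha/m})^{-1}$ for all $t>0$. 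The main obstacle — and the only non-routine point — is that the weight $K t^{-\delta}$ is singular at the origin, so the sub-solution inequality $\Dc\psi\le-Kt^{-\delta}\psi$ holds only away from $t=0$; this gap must be closed separately, using the continuity of $y$ and $y(0)=\|u_0\|_{L^1_{\Phi_1}(\Omega)}>0$, quantified through \eqref{Holder continuity of limit solution}, to force $y\ge\psi$ on a controlled initial interval before the differential inequality takes over.
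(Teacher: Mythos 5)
Your reduction to a scalar fractional differential inequality for $y(t)=\|u(t)\|_{L^1_{\Phi_1}(\Omega)}$, the upper bound via convolution with $\ell$, the Jensen/H\"older step for $0<m\le1$, and the use of the smoothing effect to linearize the inequality for $m>1$ all match the paper's strategy. The gap is at the step you dispose of in one clause: ``via the fractional comparison principle for scalar Caputo equations recalled in Appendix~\ref{sec: FODE}''. No such comparison principle is recalled there --- the appendix contains only the Mittag--Leffler and Kilbas--Saigo asymptotics, Theorem~\ref{FODE 1}, and the fundamental identity of Lemma~\ref{fundamental identity}. Establishing that a function $Y$ which is merely in $L^\infty((0,T))$ and satisfies the Caputo differential \emph{inequality} only in the weak ($H^*$-tested) sense dominates the solution $Z$ of the corresponding equality is precisely the technical core of the paper's proof: one must first regularize the kernel, $k_n:=h_n*k$, to obtain the inequality $\partial_t[k_n*(Y-Y_0)]\ge -h_n*f(\cdot,Y)$ in a pointwise a.e.\ sense; then test the difference of the two relations with $(Z-Y)_+$ and invoke the fundamental identity with $h(r)=\tfrac12(r_+)^2$ to get $k_n*(Z-Y)_+^2(T)\le 0$ up to a sign-definite term; and finally pass $n\to\infty$ using $f(\cdot,Y),f(\cdot,Z)\in L^1$. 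Your proposed ``vanishing perturbation of the initial datum'' addresses non-Lipschitzness of $r\mapsto r^m$ at the origin, which is a red herring here (monotonicity of $f$ in $x$ is what the argument actually uses); it does not address the regularity obstruction, which is the real issue.

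For $m>1$ there is a second, self-acknowledged gap. You propose comparing with a hand-built sub-solution $\psi=\eta\,G(\kappa t)$ and then observe that the sub-solution inequality fails near $t=0$ because of the singular weight $Kt^{-\delta}$, leaving the closing of that gap unspecified. The paper avoids this entirely: the linear equation $\Dc Z=-ct^{-\delta}Z$ with $\delta=\alpha(m-1)/m$ falls exactly under Lemma~\ref{Lem Kilbas-Saigo} (since $-\delta>-\alpha$), so one compares with the \emph{exact} Kilbas--Saigo solution $Z(t)=Y_0E_{\alpha,1/m,(1-m)/m}[-ct^{\alpha/m}]$, whose two-sided decay $\asymp(1+t^{\alpha/m})^{-1}$ is Proposition~\ref{decay of Kilbas FODE}; the singularity of the weight at $t=0$ is integrable and harmless in the regularized integral comparison. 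I recommend replacing both the invoked ``comparison principle'' and the sub-solution construction by the regularization-plus-energy argument sketched above.
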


\begin{proof}
 Inspired by  \cite{VZ-Blowup}, we look for a fractional differential inequality for the $L^1_{\Phi_1}$-norm of the solution and then use a comparison argument.

\noindent$\bullet~$\textbf{Step 1. }{\it Upper bound.} From the definition of the nonnegative $H^*$-solution $u$ in \eqref{strong H solution}, we deduce
\begin{align*}
  \int_\Omega \partial_t[k*(u-u_0)](t)\Phi_1\dx=-\lambda_1\int_{\Omega}u^m(t)\Phi_1\dx\le 0\qquad\mbox{ for a.e. }t>0\,,
\end{align*}
using as a test function $0\le\lambda_1\Phi_1\in H(\Omega)$. 
We now convolve both sides of the equation with the nonnegative kernel $\ell$ and obtain
\begin{align*}
  \ell*\left(\int_\Omega \partial_t[k*(u-u_0)](t)\Phi_1\dx\right)=\int_\Omega \left(\ell*\partial_t[k*(u-u_0)]\right)(t)\Phi_1\dx\le 0\,.
\end{align*}
Since $\ell*k=1$ and $\partial_t\left[1*(u(t,x)-u_0(x))\right]=u(t,x)-u_0(x)$ for a.e. $x\in\Omega$, we conclude that
$$\int_\Omega \left(u(t)-u_0)\right)\Phi_1\dx\le 0\qquad\forall t>0\,.$$

\noindent$\bullet~$\textbf{Step 2. }{\it Lower regularized differential inequality}. We first regularize the kernel $k$ to apply a comparison principle for fractional ODEs. We know from  \eqref{Akagi estimate} that $Y(t):=\|u(t)\|_{L^1_{\Phi_1}}\in L^\infty((0,T))$ and $k*(u-u_0)\in H^1([0,T])$. Moreover, there exists a  nonnegative kernel $h_n\in H^1([0,T])$ (see for instance \cite{VZ,WWZ}) satisfying
\begin{align*}
  h_n*g\rightarrow g \qquad\mbox{in } L^p(0,T)\quad\mbox{when }n\rightarrow\infty\,,
\end{align*}
for any function $g\in L^p((0,T))$.  The aim of this step is to prove the following inequality for the function $Y\in L^\infty((0,T))$:
\begin{equation}\label{Regularized fractional ineq}
  \partial_t[k_n*(Y-Y_0)](t)\ge -h_n*f(t,Y(t))\qquad\mbox{for a.e. } t\in(0,T)\,,
\end{equation}
with $k_n:=h_n*k$ and $f(t,x)$ nondecreasing in $x$ for a.e. $t>0$ and $f(t,Y(t))\in L^1((0,T))$.

 We start with the definition of the $H^*$-solution $u$ in \eqref{strong H solution} with the admissible test function $\lambda_1\Phi_1\in H(\Omega)$,
\begin{equation}\label{testing with Phi1}
        \int_\Omega \partial_t[k*(u-u_0](t)\Phi_1\dx=-\lambda_1\int_\Omega u^m(t)\Phi_1\dx\qquad\mbox{ for a.e. } t>0.
      \end{equation}
\begin{enumerate}[label=\roman*)]
  \item {\it Case $m=1$}: We convolve both sides of \eqref{testing with Phi1} with the nonnegative kernel $h_n\in H^1([0,T])$ and obtain for a.e. $t>0$
  \begin{equation*}
        \int_{0}^{t}\int_\Omega h_n(t-\tau)\partial_t[k*(u-u_0)](\tau)\Phi_1\dx\dtau=
        -\lambda_1\int_{0}^{t}\int_\Omega h_n(t-s) u(\tau)\Phi_1\dx\dtau\,.
      \end{equation*}
      Then, choosing $f(t,x)=\lambda_1 x$ we deduce \eqref{Regularized fractional ineq} from Fubini-Tonelli's Theorem and the regularity of $h_n\in H^1([0,T])$.
  \item {\it Case $0<m<1$}: We use Hölder's inequality in \eqref{testing with Phi1} and get
      \begin{equation*}
        \int_\Omega \partial_t[k*(u-u_0](t)\Phi_1\dx\ge -\lambda_1\|\Phi_1\|_1^{1-m}\left(\int_\Omega u(t)\Phi_1\dx\right)^m\qquad\mbox{ for a.e. } t>0.
      \end{equation*}
      Then, we convolve the inequality above with the nonnegative kernel $h_n\in H^1([0,T])$,
      \begin{equation*}
        \int_{0}^{t}\int_\Omega h_n(t-\tau)\partial_t[k*(u-u_0)](\tau)\Phi_1\dx\dtau
        \ge-\lambda_1\|\Phi_1\|_1^{1-m}\!\int_{0}^{t}h_n(t-s)\left(\int_\Omega u(\tau)\Phi_1\dx\right)^m\dtau,
      \end{equation*}
      for a.e. $t>0$. We deduce \eqref{Regularized fractional ineq} with $f(t,x)=\lambda_1\|\Phi_1\|_1^{1-m}|x|^{m-1}x$ using Fubini-Tonelli's Theorem and the regularity of the kernel $h_n\in H^1([0,T])$.
  \item {\it Case $m>1$}: We use smoothing effects of Theorem \ref{Thm smoothing effect} in \eqref{testing with Phi1}
  \begin{align*}
    \int_\Omega \partial_t[k*(u-u_0](t)\Phi_1\dx&=-\lambda_1\int_\Omega u^m(t)\Phi_1\dx\ge-\lambda_1\|u(t)\|_\infty^{m-1}\int_\Omega u(t)\Phi_1\dx\\
    &\ge-\lambda_1\ka\frac{\|u_0\|_p^{\frac{m-1}{m}}}{t^{\frac{\alpha(m-1)}{m}}}\int_\Omega u(t)\Phi_1\dx\,.
  \end{align*}
  Hence, convolving with the nonnegative kernel $h_n\in H^1([0,T])$ we obtain \eqref{Regularized fractional ineq} as before, but with $f(t,x)=\lambda_1\ka\|u_0\|_p^{\frac{m-1}{m}} \,t^{-\frac{\alpha(m-1)}{m}}x$.
\end{enumerate}
\noindent$\bullet~$\textbf{Step 3. }{\it Lower  regularized integral inequality}. Now, we combine the inequality for solution $Y$ of \eqref{Regularized fractional ineq} with the solution $Z$ of the equation
\begin{equation}\label{FODE of Z}
  \partial_t[k_n*(Z-Z_0)](t)= -h_n*f(t,Z(t))\qquad\mbox{for a.e. } t\in(0,T)\,,
\end{equation}
with $Z_0=Y_0$ and we prove that
\begin{equation}\label{Regularized integral inequality}
  \frac{1}{2}[k_n*(Z-Y)_+^2](T) +\int_{0}^{T}(Z(t)-Y(t))_+h_n*(f(t,Z(t))-f(t,Y(t)))\dt\le 0\,.
\end{equation}

For this purpose, we subtract the equation of $Y$ in \eqref{Regularized fractional ineq}from \eqref{FODE of Z} and we multiply  the resulting equation by $(Z-Y)_+\in L^\infty(0,T)$. This yields
$$(Z-Y)_+(t)\partial_t\left[k_n*(Z-Y)\right](t)+(Z-Y)_+ (t)\, h_n*(f(t,Z(t))-f(t,Y(t)))\le 0\,.$$
By the fundamental identity of Lemma \ref{fundamental identity} with $h(r)=\frac{1}{2}(r_+)^2$, we obtain
\begin{align*}
  &\frac{1}{2}\partial_t\left[k_n*(Z-Y)_+^2\right](t)+(Z-Y)_+(t)\, h_n*(f(t,Z(t))-f(t,Y(t)))\\
  \le&(Z-Y)_+(t)\partial_t\left[k_n*(Z-Y)\right](t)+(Z-Y)_+ (t)\, h_n*(f(t,Z(t))-f(t,Y(t)))\le 0.
\end{align*}
Integrating in $[0,T]$ and recalling that $Y_0=Z_0$ a.e. in $\Omega$, we obtain \eqref{Regularized integral inequality}\,.

\noindent$\bullet~$\textbf{Step 4. }{\it Comparison with $Z$.} We want to take the limit in \eqref{Regularized integral inequality} and obtain
\begin{equation}\label{convolution negative}
  k*(Z-Y)_+^2(T)\le 0\,,
\end{equation}
 which implies $Z(t)\le Y(t)$ for a.e. $t>0$ since $k(t)>0$ for all $t>0$.

 Recalling that $(Z-Y)_+^2\in L^\infty((0,T))$, we deduce
\begin{align*}
 \left| k_n*(Z-Y)_+^2(T)-k*(Z-Y)_+^2(T)\right|\le \|(Z-Y)_+^2\|_\infty\int_{0}^{T}|(h_n*k)(t)-k(t)|\dt\rightarrow 0\,,
\end{align*}
 when $n\rightarrow\infty$. On the other hand, since $f(t,Y(t)),f(t,Z(t))\in L^1((0,T))$ we obtain
 \begin{align*}
    &\left| \int_{0}^{T}\!\!(Z(t)-Y(t))_+h_n*(f(t,Z(t))-f(t,Y(t)))\dt -\int_{0}^{T}(Z(t)-Y(t))_+(f(t,Z(t))-f(t,Y(t)))\dt\right|\\
    \le& \|(Z-Y)_+\|_\infty\int_{0}^{T}\!\! |[h_n*(f(t,Z(t))-f(t,Y(t)))](t)-(f(t,Z(t))-f(t,Y(t)))(t)|\dt\rightarrow 0\,,
 \end{align*}
when $n\rightarrow\infty$. Thus, taking the limit in \eqref{Regularized integral inequality} we obtain
\begin{equation*}
  \frac{1}{2}[k*(Z-Y)_+^2](T) +\int_{0}^{T}(Z(t)-Y(t))_+(f(t,Z(t))-f(t,Y(t)))\dt\le 0\,.
\end{equation*}
Notice that $(Z(t)-Y(t))_+(f(t,Z(t))-f(t,Y(t)))\ge 0$ for a.e. $t>0$ by the nondecreasing assumption on $f$, and hence \eqref{convolution negative} holds. The result follows from \eqref{convolution negative} and  the fact that $k(t)>0$ for every $t>0$.

\noindent$\bullet~$\textbf{Step 5. }{\it Lower bound.} Finally, we show that if $Z_0=Y_0$, then
$$\frac{c}{1+t^{\frac{\alpha}{m}}}\le Z(t)\le Y(t)\qquad\forall t>0\,,$$
which give us the lower bound for $Y(t)=\|u(t)\|_{L^1_{\Phi_1}(\Omega)}$.

Let us recall the form of  $Z$ for the three particular case of the fractional ODEs in \eqref{FODE of Z}:
\begin{enumerate}[label=\roman*)]
  \item If $m=1$ and $f(t,x)=cx$, $Z$ is the Mittag-Leffler function $Z(t)=Y_0E_\alpha[-ct^\alpha]$, see \eqref{Mittag-Lefler function}.
  \item If $0<m<1$ and $f(t,x)=c|x|^{m-1}x$, we do not know the explicit solution of $Z$, but we know its behaviour by the result of Vergara and Zacher in \cite{VZ}, see Theorem \ref{FODE 1}.
  \item  If $m>1$ and $f(t,x)=ct^{-\frac{\alpha(m-1)}{m}}x$, $Z$ is the so-called Kilbas-Saigo function of the form $Z(t)=Y_0E_{\alpha,\frac{1}{m},\frac{1-m}{m}}[-c t^{\frac{\alpha}{m}}]$, see Lemma \ref{Lem Kilbas-Saigo}.
\end{enumerate}
Furthermore, by \eqref{Mittag-Leffler decay}, Theorem \ref{FODE 1} and Proposition \ref{decay of Kilbas FODE}, the three functions $Z$ defined above have the same decay
$$Z(t)\asymp\frac{1}{1+t^{\frac{\alpha}{m}}}\,,$$
which implies the desired lower bound
\end{proof}

Now that we have a lower bound for the decay of the  $L^1_{\Phi_1}$-norm of solutions of \eqref{CDP}, we are in a position to prove the sharp time decay of the $L^p$-norm stated in Theorem \ref{Thm Sharp Decay Lp}.\vspace{3mm}

\noindent{\it Proof of Theorem \ref{Thm Sharp Decay Lp}.} The proof follows by an interpolation argument, the lower bound of the $L^1_{\Phi_1}$-norm, and the $L^p-L^\infty$ smoothing effects. By Proposition \ref{L1Phi decay} and Theorem \ref{Thm smoothing effect} we obtain that for every $t>0$
\begin{align}\label{ineq for Lp norms}
  \frac{c}{1+t^{\frac{\alpha}{m}}} &\le \|u(t)\|_{L^1_{\Phi_1}}
  \le \|u(t)\|_p\;\|\Phi_1\|_{p'} \le|\Omega|\|u(t)\|_\infty\,\|\Phi_1\|_\infty
  \le|\Omega|\,\|\Phi_1\|_\infty\,\ka\, \frac{\|u_0\|^{\frac{1}{m}}_{p_0}}{t^{\frac{\alpha}{m}}}\,,
\end{align}
for any $1\le p\le\infty$ and $p'$ being its conjugate exponent.
Hence, for the case $1\le p<\infty$, we divide \eqref{ineq for Lp norms} by $\|\Phi_1\|_{p'}$ and for the case $p=\infty$ we divide inequality \eqref{ineq for Lp norms} by $|\Omega|\,\|\Phi_1\|_\infty$ and conclude the proof.

\hfill$\square$

\section{Open questions}
In the study of the problem for the Caputo Porous Medium equation many questions have arisen. We share the most important ones with the community.
\begin{enumerate}[label=\roman*),wide, labelindent=0pt]
  \item {\bf T-contractivity:} From the fractional gradient flow theory developed in \cite{GA,LiLiu2019,LiSalgado} we know that the contractivity in the Hilbert space  $H^*(\Omega)$ holds, that is, if $u$ and $v$ are $H^*$-solutions with initial data $u_0,v_0$, respectively, then
      $$\|u(t)-v(t)\|_{H^*(\Omega)}\le\|u_0-v_0\|_{H^*(\Omega)}\qquad\forall t>0\,.$$
      In order to obtain the comparison principle of Theorem \ref{Comparison Principle}, we try to prove  a stronger version of property above called T-contractivity in $H^*(\Omega)$, namely,
      $$\|(u(t)-v(t))_+\|_{H^*(\Omega)}\le\|(u_0-v_0)_+\|_{H^*(\Omega)}\qquad\forall t>0\,.$$
      Since nonnegative functions of $H^*(\Omega)$ are in $L^1_{\Phi_1}(\Omega)$ and $\Phi_1(x)\asymp\mbox{dist}(x,\partial\Omega)^\gamma$, this T-contractivity would imply the comparison principle of Theorem \ref{Comparison Principle} directly. But, due to the lack of regularity estimates for $\Dc u$, we have to prove the T-contractivity in $L^1_{\Phi_1}(\Omega)$ at the discrete level in Theorem \ref{Discrete T-contractivity L1Phi1} and inherit the comparison principle from the discrete comparison principle.

      We think that it would be enough to have $|\Dc u|\in L^2((0,T):H^*(\Omega))$ or $|\Dc u|\in L^2((0,T):L^1_{\Phi_1}(\Omega))$ in order to prove T-contractivity in $H^*(\Omega)$ or $L^1_{\Phi_1}(\Omega)$, respectively. Let us remark that T-contractivity in $L^1_{\Phi_1}(\Omega)$ is also an open problem in the case of classical time derivative, when considering  weak dual solutions of the Porous Medium and Fast Diffusion equation. In \cite{BII,BV2016} it was proved only for \textit{minimal} weak dual solutions.

\item {\bf Higher regularity:}
The classical way to prove Hölder continuity is via some kind of Harnack inequalities for the solution. The problem with PDEs involving Caputo time derivative is that obtaining lower bounds for solutions is significantly more challenging.
In this manuscript we prove a Global Harnack Principle in Theorem \ref{Thm Boundary behaviour2} only for a class of solutions with ``big'' initial data, but we did not succeed to prove it for general initial data.  To our knowledge, the only result in this direction appeared in \cite{ACV} and \cite{ACV-PME}, where the authors apply the De Giorgi's method to show Hölder regularity.

\item {\bf Asymptotic behaviour:} Theorem \ref{Thm Sharp Decay Lp} shows that solutions of \eqref{CDP} does not vanish in finite time for any $m>0$. One may wonder about the behaviour of the solutions when $t\rightarrow\infty$. From the smoothing effects of Theorem \ref{Thm smoothing effect}, we deduce that solutions vanish at time increases, but we would like to know if there is any asymptotic profile $S(x)$ such that
    $$\lim\limits_{t\rightarrow\infty}\|t^{\frac{\alpha}{m}}u(t)-S\|_X=0\,,$$
    for certain norm $X$. From the classical theory, we expect that $S$ is the solution of the elliptic problem described in Section \ref{section: sharp decay} and that the solution behaves at infinity like the the separate-variable solution.
    In the classical approach, the results relies on a change of variables;  with Caputo time derivative this approach would be more complicated, see the fundamental identity for regular kernels in Lemma \ref{fundamental identity}.
\end{enumerate}


\newpage

\appendix

\section{Appendix. Fractional ODEs}\label{sec: FODE}

We collect  some known results about the Caputo derivative and the associated fractional ODEs that are useful for our study. Let us start with the linear differential equation with $\alpha\in(0,1)$, $v_0>0$ and $\lambda\in\mathbb{R}^N$
\begin{equation}\label{Mittag-Lefler function}
  \begin{cases}
    \Dc v(t)=\lambda v(t)\quad\mbox{in }(0,+\infty)\\
    v(0)=v_0,
  \end{cases}
\end{equation}
which solution is the so-called Mittag-Leffler function of the form $v(t)=v_0\,E_\alpha(\lambda t^\alpha)$, where
\begin{equation*}
  E_\alpha(x)=\sum_{k=1}^{\infty}\frac{x^k}{\Gamma(k\alpha+1)}\,.
\end{equation*}
There is a large amount of literature about this function, cf \cite{Diethelm,GorenfloMainardi-Book}. But, let us remark that the behaviour at the infinity of $E_\alpha(-x)$ is given by
\begin{equation}\label{Mittag-Leffler decay}
  E_\alpha(-x)\asymp \frac{1}{1+x}\,,
\end{equation}
as it is shown in \cite{Mainardi2020}.
Moreover, we can see the Mittag-Leffler function $E_\alpha(-x^2)$ with $\alpha\in(0,1)$ as an interpolation between the Gaussian and the polynomial with quadratic decay
$$E_1(-x^2)=e^{-x^2}\le E_{\alpha_1}(-x^2)\le E_{\alpha_2}(-x^2)\le\frac{1}{1+x^2}\,,$$
with $0<\alpha_1\le\alpha_1<1$.
Now, let us consider another linear fractional ODE which has an explicit solution.
\begin{lemma}\cite[Example 4.11]{KST-Book}\label{Lem Kilbas-Saigo}
    Let $0<\alpha<1$ and $\beta>-\alpha$. Then, the solution of
    \begin{equation}\label{Kilbas-Saigo FODE}
        \begin{cases}
          \Dc v(t) &=-\lambda t^\beta v(t)  \\
          v(0)&=v_0,
        \end{cases}
    \end{equation}
    is given by
    $$v(t)=v_0 \,E_{\alpha,1+\frac{\beta}{\alpha},\frac{\beta}{\alpha}}[-\lambda t^{\alpha+\beta}]\,.$$
  \end{lemma}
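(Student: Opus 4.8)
\emph{Proof proposal.} Since this is a known explicit formula, the plan is to verify it directly by the power series / Volterra method and then invoke uniqueness. Because $\alpha+\beta>0$, I would look for a solution of \eqref{Kilbas-Saigo FODE} continuous at $t=0$ with $v(0)=v_0$ in the form $v(t)=v_0\sum_{k\ge0}a_k t^{(\alpha+\beta)k}$ with $a_0=1$. The only analytic input needed is the elementary identity
\begin{equation*}
\Dc t^{p}=\frac{\Gamma(p+1)}{\Gamma(p+1-\alpha)}\,t^{p-\alpha}\quad(p>0),\qquad \Dc 1=0,
\end{equation*}
which follows from the definition of $\Dc$ and the Beta integral $\int_0^t\tau^{p}(t-\tau)^{-\alpha}\dtau=t^{p+1-\alpha}B(p+1,1-\alpha)$.

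First I would substitute the ansatz into \eqref{Kilbas-Saigo FODE}. Differentiating term by term gives $\Dc v(t)=v_0\sum_{k\ge1}a_k\frac{\Gamma((\alpha+\beta)k+1)}{\Gamma((\alpha+\beta)k-\alpha+1)}t^{(\alpha+\beta)k-\alpha}$, whereas $-\lambda t^\beta v(t)=-\lambda v_0\sum_{j\ge0}a_j t^{(\alpha+\beta)(j+1)-\alpha}$. Matching the coefficient of $t^{(\alpha+\beta)k-\alpha}$ (so $j=k-1$) for each $k\ge1$ yields the recursion $a_k=-\lambda\, a_{k-1}\,\frac{\Gamma((\alpha+\beta)k-\alpha+1)}{\Gamma((\alpha+\beta)k+1)}$, hence $a_k=(-\lambda)^k\prod_{j=1}^{k}\frac{\Gamma((\alpha+\beta)j-\alpha+1)}{\Gamma((\alpha+\beta)j+1)}$. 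Setting $m:=1+\tfrac{\beta}{\alpha}$ and $\ell:=\tfrac{\beta}{\alpha}$, a one-line computation gives $(\alpha+\beta)j-\alpha=\alpha\big((j-1)m+\ell\big)$ and $(\alpha+\beta)j=\alpha\big((j-1)m+\ell+1\big)$, so the product is precisely the $k$-th coefficient $c_k$ in the definition of the Kilbas--Saigo function $E_{\alpha,m,\ell}$. Collecting powers, $a_k t^{(\alpha+\beta)k}=c_k(-\lambda t^{\alpha+\beta})^k$, and therefore $v(t)=v_0\sum_{k\ge0}c_k(-\lambda t^{\alpha+\beta})^k=v_0\,E_{\alpha,1+\frac{\beta}{\alpha},\frac{\beta}{\alpha}}[-\lambda t^{\alpha+\beta}]$, which is the asserted identity.

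It then remains to justify the formal manipulations and uniqueness. The series $\sum c_k z^k$ is entire, since the Gamma asymptotics give $c_k/c_{k-1}=O(k^{-\alpha})$ as $k\to\infty$; hence both $v$ and the series obtained after applying $\Dc$ converge locally uniformly on $[0,\infty)$, which legitimises the term-by-term differentiation. Alternatively — and this also settles uniqueness — one reformulates \eqref{Kilbas-Saigo FODE} as the linear Volterra equation $v(t)=v_0-\frac{\lambda}{\Gamma(\alpha)}\int_0^t (t-\tau)^{\alpha-1}\tau^\beta v(\tau)\dtau$, whose kernel is locally integrable exactly because $\beta>-\alpha>-1$; the constructed series solves it, and a standard Gronwall/contraction argument on this Volterra equation gives uniqueness in $C([0,\infty))$ (equivalently in the class where $\Dc$ is meaningful). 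Note that $\beta>-\alpha$ is also what guarantees $\alpha(jm+\ell)=(\alpha+\beta)j+\beta>-\alpha>-1$ for all $j\ge0$, so no Gamma factor is singular and the $c_k$ are well defined. The main obstacle here is modest and purely technical — the convergence estimate and the algebra identifying the recursion with the Kilbas--Saigo coefficients — rather than any conceptual difficulty.
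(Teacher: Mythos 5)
The paper does not prove this lemma at all: it is quoted verbatim from \cite[Example 4.11]{KST-Book}, so there is no in-paper argument to compare against. Your direct verification is correct and self-contained. The key computations check out: the Caputo derivative formula $\Dc t^p=\tfrac{\Gamma(p+1)}{\Gamma(p+1-\alpha)}t^{p-\alpha}$ is the right elementary input (and $\Dc$ of the constant $k=0$ term vanishes, which is why the differentiated sum starts at $k=1$); the exponent matching $(\alpha+\beta)j+\beta=(\alpha+\beta)k-\alpha$ forces $j=k-1$ and yields exactly the recursion you state; and the identities $\alpha\big((j-1)m+\ell\big)=(\alpha+\beta)j-\alpha$ and $\alpha\big((j-1)m+\ell+1\big)=(\alpha+\beta)j$ with $m=1+\beta/\alpha$, $\ell=\beta/\alpha$ do identify your product with the Kilbas--Saigo coefficients $c_k=\prod_{j=0}^{k-1}\Gamma(\alpha(jm+\ell)+1)/\Gamma(\alpha(jm+\ell+1)+1)$ after the shift $j\mapsto j+1$. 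The ratio estimate $c_k/c_{k-1}=O(k^{-\alpha})$ gives an entire series, and the reformulation as a weakly singular Volterra equation (kernel integrable precisely because $\beta>-\alpha>-1$) is the standard and cleanest way to justify both the formal manipulations and uniqueness. The only cosmetic caveat is that for $-\alpha<\beta<0$ the $k=1$ term of $\Dc v$ behaves like $t^\beta$ near $t=0$, so the ODE holds for $t>0$ (matching the singular right-hand side) while $v$ itself remains continuous at $0$; this is consistent with the statement and worth a sentence, but it is not a gap.
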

  This function is called the Kilbas-Saigo function and its explicit form can be found in \cite{BoudasaSimon}. However, for our study we only need the asymptotic behaviour that we state below.
  \begin{proposition}\cite[Proposition 6]{BoudasaSimon}\label{decay of Kilbas FODE}
  Let $0<\alpha<1$, $r>0$ and $l>r-\frac{1}{\alpha}$, then
  $$E_{\alpha,r,l}(-x)\asymp \frac{1}{1+c\,x}\qquad\quad\mbox{as}\quad x\rightarrow\infty\,.$$
  \end{proposition}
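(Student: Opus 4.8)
The plan is to reduce the asymptotics in Proposition~\ref{decay of Kilbas FODE} to those of a linear Caputo ODE and then to establish a two–sided bound by Volterra/comparison techniques. By Lemma~\ref{Lem Kilbas-Saigo}, in the parameter slice $l=r-1$ — which is exactly the one used in this paper, $r=1/m$, $l=(1-m)/m$ — the function $v(t):=E_{\alpha,r,l}(-\lambda t^{\alpha r})$, where $\beta:=\alpha(r-1)$ so that $\alpha r=\alpha+\beta=:\gamma$, solves $\Dc v(t)=-\lambda t^{\beta}v(t)$ with $v(0)=1$, equivalently the Volterra equation
\begin{equation*}
  v(t)=1-\frac{\lambda}{\Gamma(\alpha)}\int_0^t (t-s)^{\alpha-1}s^{\beta}\,v(s)\,\ds .
\end{equation*}
After the substitution $x=\lambda t^{\gamma}$ (note $\gamma>0$ since $\beta>-\alpha$), the claim $E_{\alpha,r,l}(-x)\asymp(1+cx)^{-1}$ becomes $v(t)\asymp(1+t^{\gamma})^{-1}$ as $t\to\infty$, and in this slice the hypothesis $l>r-1/\alpha$ simply reads $\alpha<1$. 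A preliminary step is to record that $x\mapsto E_{\alpha,r,l}(-x)$ is positive and nonincreasing on $[0,\infty)$: for the ODE slice this follows from a maximum–principle/comparison argument for Caputo equations in the spirit of the Appendix, and for the general range $(r,l)$ it is part of the structural theory in \cite{BoudasaSimon}.

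First I would prove the upper bound. Since $v$ is nonincreasing and $\alpha-1<0$, on $[0,t/2]$ one has $(t-s)^{\alpha-1}\ge t^{\alpha-1}$ and $v(s)\ge v(t/2)$, hence
\begin{equation*}
  \frac{\lambda}{\Gamma(\alpha)}\int_0^t (t-s)^{\alpha-1}s^{\beta}v(s)\,\ds
  \ge \frac{\lambda\,t^{\alpha-1}}{\Gamma(\alpha)}\,v(t/2)\int_0^{t/2}s^{\beta}\,\ds
  = c_1\,v(t/2)\,t^{\gamma},
\end{equation*}
with $c_1>0$ (using $\beta>-\alpha>-1$ for the integrability of $s^\beta$). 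Because $v\ge0$, the Volterra equation forces $c_1\,v(t/2)\,t^{\gamma}\le 1$, i.e. $v(t/2)\le c_1^{-1}t^{-\gamma}$; combined with $0\le v\le 1$ this yields $v(t)\le C(1+t^{\gamma})^{-1}$ for all $t>0$.

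The hard part will be the matching lower bound. Feeding the upper bound back into the Volterra equation only shows that the memory integral is bounded (indeed it increases to $1$, since $v(t)\to0$), so the crude estimate produces just $v(t)\ge 1-c_2$, which is vacuous when $c_2\ge1$. To close the gap I see three routes: \emph{(i)} bootstrap the constant — inside the memory integral, replace the bound $v\le1$ on an initial layer by the improving bound $v(s)\le C s^{-\gamma}$ and iterate, to show that $\limsup_{t\to\infty}t^{\gamma}v(t)<\infty$ already forces $\liminf_{t\to\infty}t^{\gamma}v(t)>0$; \emph{(ii)} pass to the Laplace transform $\widehat v(p)=\int_0^\infty e^{-pt}v(t)\,\dt$ (for $\beta=0$ one gets explicitly $\widehat v(p)=p^{\alpha-1}/(p^\alpha+\lambda)$, so $\widehat v(p)\asymp p^{\gamma-1}$ with a positive prefactor as $p\to0^+$; in general the ODE yields a functional identity for $\widehat v$ with the same conclusion) and then invoke a Karamata Tauberian theorem — legitimate since $v$ is monotone — to obtain $v(t)\asymp t^{-\gamma}$; \emph{(iii)} use a Mellin–Barnes representation of $E_{\alpha,r,l}$, whose Taylor coefficients are products of $\Gamma$–quotients, writing $E_{\alpha,r,l}(-x)=\frac{1}{2\pi i}\int_{\mathcal L}\Phi(\sigma)\,x^{-\sigma}\,{\rm d}\sigma$ with $\Phi$ a quotient of Gamma functions, and reading the leading term off the residue at the rightmost pole $\sigma_0$ of $\Phi$. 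I expect route \emph{(iii)} to be the technical core and the most robust way to cover the full parameter range stated: the pole–location bookkeeping is precisely where the hypothesis $l>r-1/\alpha$ enters, being exactly the condition that $\Phi$ has no pole to the right of $\sigma=1$ (hence no term decaying slower than $x^{-1}$), with the residue at $\sigma_0=1$ a positive multiple of $x^{-1}$, yielding both the correct decay rate and the sign. Routes \emph{(i)}–\emph{(ii)} are the quickest in the slice $l=r-1$ actually needed here, so for the purposes of this paper one may simply carry out the bootstrap of \emph{(i)} once the upper bound above is in hand.
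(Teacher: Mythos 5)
First, note that the paper does not prove this statement at all: Proposition \ref{decay of Kilbas FODE} is imported verbatim from Boudabsa--Simon \cite{BoudasaSimon} as a black box, so any argument you give is necessarily ``a different route.'' Your upper bound is a nice, essentially complete argument on the ODE slice $l=r-1$: the Volterra reformulation, the estimate $(t-s)^{\alpha-1}\ge t^{\alpha-1}$ on $[0,t/2]$, and the forcing $c_1v(t/2)t^{\gamma}\le 1$ from nonnegativity are all correct (with $\beta>-\alpha>-1$ guaranteeing integrability of $s^\beta$). But be aware that the monotonicity of $v$ you use there is itself nontrivial: as the paper stresses before Theorem \ref{Thm Benilan-Crandall}, $\Dc v\le 0$ does \emph{not} imply $v$ is nonincreasing, so you are already borrowing the complete-monotonicity theorem of \cite{BoudasaSimon} rather than deriving monotonicity ``in the spirit of the Appendix.''

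The genuine gap is the lower bound, which is the half the paper actually needs (non-extinction). None of your three routes closes it. Route (i) does not work as described: feeding $v(s)\le C\min(1,s^{-\gamma})$ into the memory integral and using $\beta-\gamma=-\alpha$ only shows $I(t)\le O(t^{\alpha-1})+C\lambda B(\alpha,1-\alpha)/\Gamma(\alpha)$, a constant that may well exceed $1$; to conclude $v(t)=1-I(t)\ge ct^{-\gamma}$ you would need the much sharper bound $I(t)\le 1-ct^{-\gamma}$, and no iteration of the upper bound produces that. Route (ii) is only explicit for $\beta=0$ (i.e. $m=1$); for $\beta\ne 0$ the factor $t^{\beta}$ does not Laplace-transform into a usable functional identity. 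Route (iii) rests on a misdescription: the Taylor coefficients of $E_{\alpha,r,l}$ are \emph{products} $\prod_{j=0}^{k-1}\Gamma(\alpha(jr+l)+1)/\Gamma(\alpha(jr+l+1)+1)$, not a single Gamma quotient in $k$, so there is no elementary Mellin--Barnes kernel whose rightmost pole you can read off; this is precisely why the asymptotics in \cite{BoudasaSimon} require real work. The standard self-contained way to get the lower bound on the ODE slice is the Vergara--Zacher barrier argument used for Theorem \ref{FODE 1}: exhibit an explicit subsolution of the form $w(t)=c(1+t^{\gamma})^{-1}$, verify $\Dc w(t)\ge-\lambda t^{\beta}w(t)$ by a direct computation with the kernel $k$, and invoke a comparison principle for the regularized fractional ODE (as in Steps 2--4 of Proposition \ref{L1Phi decay}). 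Either carry that computation out or state plainly that both bounds are being quoted from \cite{BoudasaSimon}.
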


In the nonlinear setting, fractional ODEs becomes very delicate. Let us present the following result due to Vergara and Zacher addressing the behaviour of a basic nonlinear fractional ODE.
\begin{theorem}\label{FODE 1}\cite[Theorem 7.1]{VZ}
  Let $\alpha\in(0,1)$, $\lambda,m>0$ and $v_0>0$. Let $v\in H^1_{\rm loc}(\mathbb{R}_+)$ be the solution of
  $$\begin{cases}
      \Dc v(t)=-\lambda v^m(t) & \mbox{in } (0,+\infty)\\
      v(0)=v_0.
    \end{cases}$$
    Then, there exist $c_1,c_2>0$ depending on $v_0, m,\alpha$ and $\lambda$ such that
    \begin{equation}\label{nonlinear ODE decay}
      \frac{c_1}{1+t^{\frac{\alpha}{m}}}\le v(t)\le \frac{c_2}{1+t^{\frac{\alpha}{m}}}\qquad\forall t>0\,.
    \end{equation}
\end{theorem}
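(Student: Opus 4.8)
The plan is to reduce the statement to a scalar comparison argument for the Caputo ODE and then sandwich $v$ between two explicit barriers built from the Mittag--Leffler and Kilbas--Saigo functions of the Appendix. First I would rewrite the equation in Volterra form, $v(t)=v_0-\lambda(\ell*v^m)(t)$, which, since $\ell\ge0$ and $v^m\ge0$, immediately gives $0\le v(t)\le v_0$ for all $t$ (positivity holds since $v$ cannot cross $0$ while remaining a solution issued from $v_0>0$). The engine is a comparison principle for the scalar equation $\Dc w=g(t,w)$ whenever $g(t,\cdot)$ is nonincreasing --- which is exactly our case, $g(t,w)=-\lambda w^m$ on $w\ge0$: if $\bar v,\underline v\in W^{1,1}_{\mathrm{loc}}(\mathbb{R}_+)$ satisfy $\Dc\bar v\ge g(t,\bar v)$, $\Dc\underline v\le g(t,\underline v)$ and $\underline v(0)\le v_0\le\bar v(0)$, then $\underline v\le v\le\bar v$ on $(0,\infty)$. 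This would be proved exactly as in Steps~3--4 of the proof of Proposition~\ref{L1Phi decay}: subtract, multiply by $(v-\bar v)_+$, apply the fundamental identity of Lemma~\ref{fundamental identity} with $h(r)=\tfrac12(r_+)^2$, and use monotonicity of $g$ to get $k*(v-\bar v)_+^2\le0$, hence $v\le\bar v$ (and symmetrically for $\underline v$).

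Next come the ``easy'' halves. Put $\beta:=\alpha(1-m)/m$, so that $\beta>-\alpha$ and $\alpha+\beta=\alpha/m$, and let $G_\mu(t):=E_{\alpha,\,1/m,\,(1-m)/m}(-\mu t^{\alpha/m})$ be the Kilbas--Saigo function solving $\Dc G_\mu=-\mu t^{\beta}G_\mu$, $G_\mu(0)=1$ (Lemma~\ref{Lem Kilbas-Saigo}); by Proposition~\ref{decay of Kilbas FODE}, $G_\mu(t)\asymp(1+c\mu t^{\alpha/m})^{-1}$. If $0<m<1$, the function $v_0G_\mu$ is a supersolution provided $\mu t^{\beta}\le\lambda(v_0G_\mu)^{m-1}$; writing $y=\mu t^{\alpha/m}$ and using $(v_0G_\mu)^{m-1}\gtrsim v_0^{m-1}(1+cy)^{1-m}$ together with $t^{\beta}=\mu^{m-1}y^{1-m}$, this reduces to $\mu^{m}\lesssim\lambda v_0^{m-1}$, true for $\mu$ small; comparison then gives the upper bound $v(t)\le v_0G_\mu(t)\asymp(1+t^{\alpha/m})^{-1}$. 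Symmetrically, if $m>1$, the same $v_0G_\mu$ is a subsolution once $\mu^{m}\gtrsim\lambda v_0^{m-1}$, so for $\mu$ large comparison yields the lower bound $v(t)\ge v_0G_\mu(t)\asymp(1+t^{\alpha/m})^{-1}$.

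For the remaining half --- the lower bound when $0<m<1$, the upper bound when $m>1$ --- I would linearize by feedback: insert the bound just obtained back into the nonlinearity. For instance, when $m>1$ and $v\ge c_1(1+t^{\alpha/m})^{-1}$, one has $\lambda v^{m}=\lambda v^{m-1}v\ge g(t)\,v$ with $g(t):=\lambda c_1^{m-1}(1+t^{\alpha/m})^{-(m-1)}$, so $v$ is a subsolution of the linear equation $\Dc w=-g(t)w$, and $v\le w$ for its solution $w$ with $w(0)=v_0$; since $\alpha$ plus the tail exponent of $g$ equals exactly $\alpha/m$, comparing $w$ with the appropriate Kilbas--Saigo function (Lemma~\ref{Lem Kilbas-Saigo}, Proposition~\ref{decay of Kilbas FODE}) after sandwiching $g$ between pure powers produces the sharp upper bound. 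The dual computation for $0<m<1$ uses the sharp upper bound from the previous step to make $v$ a supersolution of a linear equation whose solution decays precisely like $(1+t^{\alpha/m})^{-1}$; combined with the crude linear comparison $\Dc v\lessgtr-\lambda v_0^{m-1}v$ against the Mittag--Leffler solution of \eqref{Mittag-Lefler function} as a starting point, one upgrades a first crude power bound to the sharp exponent, iterating the feedback if a single pass does not close the gap.

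\textbf{Main obstacle.} The delicate point is that the coefficient $g(t)$ produced by the feedback is not a pure power: it is bounded near $t=0$, where $v$ still sits on the plateau $v\approx v_0-ct^{\alpha}$, and algebraic only for large $t$, where $v\approx c\,t^{-\alpha/m}$. Hence no single Kilbas--Saigo barrier --- whose natural coefficient $t^{\beta}$ either vanishes or blows up at the origin --- can be compared on all of $[0,\infty)$, and this tension between the initial plateau and the algebraic tail is precisely what makes the ``hard'' direction subtle; resolving it cleanly (either by a truncation/iteration controlling the constants across steps, or by a hand-built barrier interpolating between the two regimes, as in \cite{VZ}) is the heart of the matter. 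Everything else --- the Volterra reduction, the a priori bounds, the scalar comparison principle and the ``easy'' halves --- is routine given the Appendix.
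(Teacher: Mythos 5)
First, a point of reference: the paper does not prove this theorem at all — it is imported verbatim from \cite[Theorem 7.1]{VZ}, and the authors' only remark on the proof is that it proceeds by constructing explicit sub- and supersolutions. So there is no in-paper argument to compare against, and your proposal has to stand on its own.

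Your framework — the Volterra rewriting, the a priori bound $0\le v\le v_0$, the scalar comparison principle obtained by the kernel-regularization argument of Steps 3--4 of Proposition \ref{L1Phi decay}, and the two ``easy'' halves obtained by checking that $v_0G_\mu$ is a supersolution ($0<m<1$, $\mu$ small) or a subsolution ($m>1$, $\mu$ large) — is sound and consistent with the toolkit of Appendix \ref{sec: FODE}. The genuine gap is in the two remaining halves, and it is larger than your closing paragraph concedes. For $m>1$ the feedback step does give $\Dc v\le -g(t)v$ with $g$ bounded near $t=0$ and $\asymp t^{-\alpha(m-1)/m}$ at infinity, but, as you yourself observe, no Kilbas--Saigo coefficient $\mu t^{\beta}$ with $\beta<0$ lies below $g$ near the origin, and because the Caputo derivative remembers the whole history you cannot restart the comparison at $t=1$; the scheme does not close without a new device. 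Worse, for $0<m<1$ the feedback runs in the wrong direction altogether: inserting the sharp upper bound $v\le c_2(1+t^{\alpha/m})^{-1}$ into $v^m=v^{m-1}v$ gives, since $m-1<0$, a \emph{lower} bound on $v^{m-1}$, hence $\Dc v\le -\lambda c_2^{m-1}(1+t^{\alpha/m})^{1-m}v$ — again a subsolution inequality, which can only ever reproduce upper bounds; iterating it never yields the lower bound, and the ``crude'' Mittag--Leffler comparison $\Dc v\le-\lambda v_0^{m-1}v$ is likewise an upper bound. The missing ingredient is exactly what \cite{VZ} supplies: a hand-built barrier of the form $v_0(1+\omega t^{\alpha})^{-1/m}$, which interpolates between the initial plateau and the algebraic tail on all of $[0,\infty)$ and whose Caputo derivative is estimated by a direct, nontrivial computation. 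Without constructing such a barrier (or an equivalent substitute), the lower bound for $0<m<1$ and the upper bound for $m>1$ remain unproved.
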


The explicit form of the solution to this basic nonlinear fractional ODE is not known, but in \cite{VZ} the authors construct subsolution and supersolution to estimate the lower and upper bounds. Note that the behaviour of $v$ described in \eqref{nonlinear ODE decay} is completely different from the solution of the analogous classical ODE. In the classical case, the value of the exponent $m$ plays a crucial role. Namely, for the classical  ODE of the form $\partial_t v=-\lambda v^m$ with $v(0)=v_0$ we have that:
\begin{itemize}
  \item If $0<m<1$, the solution vanish in finite time since $$v(t)=(v_0^{1-m}-\lambda(1-m)\,t)_+^{\frac{1}{1-m}}\,.$$
  \item If $m=1$, the solution decays exponentially, $v(t)=v_0\exp(-\lambda t)$.
  \item If $m>1$, the solution decays polynomially
$$v(t)=\frac{1}{\left(v_0^{-(m-1)}+\lambda(m-1)t\right)^{\frac{1}{m-1}}}\le\frac{C}{t^{\frac{1}{m-1}}} \,.$$
\end{itemize}
 Notice,  at the level of ODEs, the difference of the asymptotic profile between classical and fractional time derivative. Moreover, there is no continuity of results when $\alpha\rightarrow 1$.\vspace{2mm}

 To conclude this appendix, we present a fundamental identity for fractional derivatives with regular kernels. This formula plays the role of the chain rule $\partial_t h(u(t))=h'(u(t))\partial_t u(t)$ in the classical sense. 

 \begin{lemma}\cite[Lemma 2.2]{VZ}\label{fundamental identity}
   Let $T>0$ and $I$ an open set of $\mathbb{R}$. Further, let $\tilde{k}\in W^{1,1}([0,T])$, $h\in C^1(I)$ and $u\in L^1((0,T))$ with $u(t)\in I$ for a.e. $t\in(0,T)$. Suppose that the functions $h(u),h'(u)u$ and $h'(u)(\tilde{k}'*u)$ belong to $L^1((0,T))$ (which is the case if $u\in L^\infty((0,T))$). Then, we have for a.e. $t\in(0,T)$
   \begin{equation*}
     \begin{split}
        h'(u(t))\frac{\rm d}{\dt}(\tilde{k}*u)(t) =&\frac{\rm d}{\dt}(\tilde{k}*h(u))(t)+\bigg(-h(u(t))+h'(u(t))u(t)\bigg)\tilde{k}(t)  \\
        &+\int_{0}^{t}\bigg(h(u(t-\tau))-h(u(t))-h'(u(t))\left[u(t-\tau)-u(t)\right]\bigg)(-\tilde{k}'(\tau))\dtau\,.
     \end{split}
   \end{equation*}
 \end{lemma}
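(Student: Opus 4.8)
The plan is to reduce the identity to the elementary rule for differentiating a convolution against a $W^{1,1}$ kernel, followed by a short algebraic rearrangement. Since $\tilde k\in W^{1,1}([0,T])$ it is absolutely continuous and $\tilde k(t)=\tilde k(0)+\int_0^t\tilde k'(s)\ds$; hence for any $g\in L^1((0,T))$ we may write $\tilde k*g=\tilde k(0)(\mathbf 1*g)+\mathbf 1*(\tilde k'*g)$, where $\tilde k'*g\in L^1((0,T))$ by Young's inequality. Therefore $\tilde k*g$ is absolutely continuous on $[0,T]$ and, for a.e.\ $t$,
\begin{align*}
  \frac{\rm d}{\dt}(\tilde k*g)(t)&=\tilde k(0)\,g(t)+(\tilde k'*g)(t)\\
  &=g(t)\,\tilde k(t)+\int_{0}^{t}\tilde k'(\tau)\big(g(t-\tau)-g(t)\big)\dtau,
\end{align*}
the second equality being obtained by adding and subtracting $g(t)$ inside $(\tilde k'*g)(t)=\int_0^t\tilde k'(\tau)g(t-\tau)\dtau$ and using $\int_0^t\tilde k'(\tau)\dtau=\tilde k(t)-\tilde k(0)$.

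\smallskip
\noindent I would then apply this formula with $g=u$ and with $g=h(u)$; both lie in $L^1((0,T))$ by hypothesis. The integrability assumptions on $h(u)$, $h'(u)u$ and $h'(u)(\tilde k'*u)$ serve precisely to guarantee that, after multiplying the $g=u$ identity by $h'(u(t))$ and expanding the $g=h(u)$ identity, every term that appears is an a.e.\ finite $L^1$ function, so that the asserted pointwise-a.e.\ identity is meaningful (here one also uses that $\tilde k$ is bounded). Multiplying the $g=u$ formula by $h'(u(t))$ gives
\begin{equation*}
  h'(u(t))\,\frac{\rm d}{\dt}(\tilde k*u)(t)=h'(u(t))u(t)\,\tilde k(t)+h'(u(t))\int_{0}^{t}\tilde k'(\tau)\big(u(t-\tau)-u(t)\big)\dtau .
\end{equation*}

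\smallskip
\noindent Finally I would expand the right-hand side of the lemma. Using the $g=h(u)$ formula, $\frac{\rm d}{\dt}(\tilde k*h(u))(t)=h(u(t))\tilde k(t)+\int_0^t\tilde k'(\tau)\big(h(u(t-\tau))-h(u(t))\big)\dtau$, while in the last integral of the lemma one splits off $-\int_0^t\tilde k'(\tau)\big(h(u(t-\tau))-h(u(t))\big)\dtau$ together with $h'(u(t))\int_0^t\tilde k'(\tau)\big(u(t-\tau)-u(t)\big)\dtau$. The two integrals carrying the differences of $h(u)$ cancel; collecting the terms proportional to $\tilde k(t)$ gives $\big(h(u(t))-h(u(t))+h'(u(t))u(t)\big)\tilde k(t)=h'(u(t))u(t)\tilde k(t)$; and what remains is exactly $h'(u(t))\int_0^t\tilde k'(\tau)\big(u(t-\tau)-u(t)\big)\dtau$. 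Comparing with the displayed expression for $h'(u(t))\frac{\rm d}{\dt}(\tilde k*u)(t)$ establishes the identity for a.e.\ $t\in(0,T)$.

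\smallskip
\noindent I do not anticipate a genuine obstacle: the only point requiring care is the justification that $\tilde k*g$ is a.e.\ differentiable with the stated derivative under the weak hypotheses $\tilde k\in W^{1,1}$, $g\in L^1$, and the verification, via Young's inequality and the stated $L^1$ assumptions, that every term entering the final identity is finite almost everywhere.
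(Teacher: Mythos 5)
Your argument is correct. Note that the paper itself gives no proof of this lemma — it is quoted verbatim from \cite[Lemma 2.2]{VZ} — so there is nothing internal to compare against; your route (write $\tilde k*g=\tilde k(0)(\mathbf 1*g)+\mathbf 1*(\tilde k'*g)$ to get $\frac{\rm d}{\dt}(\tilde k*g)(t)=g(t)\tilde k(t)+\int_0^t\tilde k'(\tau)(g(t-\tau)-g(t))\dtau$ for $g=u$ and $g=h(u)$, then cancel) is the standard derivation and matches how the identity is established in the source, with the stated $L^1$ hypotheses and the boundedness of $\tilde k\in W^{1,1}\subset AC([0,T])$ doing exactly the job you assign them of making every term finite a.e.
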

 Let us remark that when the function $h$ is convex and the kernel $\tilde{k}$ is nonnegative and nonincreasing, we obtain the inequality
 $$\frac{\rm d}{\dt}(\tilde{k}*h(u))\le h'(u(t))\frac{\rm d}{\dt}(\tilde {k}*u)(t)\,.$$

\newpage

\end{document}